\newcounter{mnote}
  \let\oldmarginpar\marginpar
 \renewcommand\marginpar[1]{\-\oldmarginpar[\raggedleft\footnotesize #1]%
    {\raggedright\footnotesize #1}}
\def\bfb{{\bf b}}
\def\bfe{{\bf e}}
\def\bff{{\bf f}}
\def\bfw{{\bf w}}
\def\bfx{{\bf x}}
\def\bfy{{\bf y}}
\newtheorem{theorem}{Theorem}
\newtheorem{lemma}{Lemma}
\newtheorem{corollary}{Corollary}
\newtheorem{remark}{Remark}
\newtheorem{definition}{Definition}
\newenvironment{proof}{\begin{trivlist}\item[]{\emph{Proof.}}}
               {\hfill$\Box$\end{trivlist}}
\begin{document}
\title{The Kolmogorov Superposition Theorem can Break the Curse of Dimensionality
when Approximating High Dimensional Functions} 
\author{Ming-Jun Lai\footnote{mjlai@uga.edu. Department of Mathematics,
University of Georgia, Athens, GA 30602. This author is supported by the Simons Foundation 
Collaboration Grant \#864439.} 
\and 
Zhaiming Shen\footnote{zshen49@gatech.edu.
Department of Mathematics,
University of Georgia, Athens, GA 30602.}}
\maketitle

\begin{abstract} 
We explain how to use Kolmogorov Superposition Theorem (KST) to break the curse of dimensionality when approximating a dense class of multivariate continuous functions. We first show that there is a class of functions called Kolmogorov-Lipschitz (KL) continuous in $C([0,1]^d)$ which can be approximated by a special ReLU neural network of two hidden layers with a dimension independent approximation 
rate $O(1/n)$ with approximation constant increasing quadratically in $d$. The number of parameters used in such neural network approximation equals to $(6d+2)n$. 
Next we introduce KB-splines by using linear B-splines to replace the outer function  and smooth the KB-splines to have the so-called LKB-splines as the basis for approximation. 
Our numerical evidence shows that  
the curse of dimensionality is broken in the following sense: 
When using the standard discrete least squares (DLS) method to approximate a continuous function, 
there exists a pivotal set of points in $[0,1]^d$ 
with size at most $O(nd)$ such that the rooted mean squares error (RMSE) from the DLS based on the pivotal set is similar to the RMSE of the DLS based on the 
original set with size $O(n^d)$. The pivotal point set is chosen by using matrix cross approximation technique 
and the number of LKB-splines used for approximation is the same as the size of the pivotal data set. 
Therefore, we do not need too many basis functions nor too many function values 
to approximate a high dimensional continuous function $f$.   Hence, the study in this paper provides an approach for dimension reduction problems. 
\end{abstract} 

\noindent
{\bf Key words:}  Kolmogorov Superposition Theorem, Functions Approximation, B-splines, Multivariate Splines Denoising, Sparse Solution, Pivotal Point Set \\
\\
{\bf AMS Subject Classification:} 41A15, 41A63, 15A23. 65D07, 65D10

\section{Introduction}
Recently, deep learning algorithms have shown a great success in many fronts of research, 
from image analysis, audio analysis, biological data analysis, to name a few.  
Incredibly, after a deep learning training of thousands of images,  
a computer can tell if a given image   is a cat, or a dog, or neither of them 
with very reasonable accuracy. 
In addition, there are plenty of successful stories such that deep learning algorithms 
can sharpen,  denoise,  enhance an image  after an intensive training. See, e.g. \cite{G16}
and \cite{M16}.  The 3D structure of a DNA can be predicted very accurately by using the 
DL approach. 
The main ingredient in DL algorithms is the neural network approximation based on ReLU functions. 
We refer to \cite{DD19} and \cite{DHP21} for
detailed explanation of the neural network approximation in deep learning algorithms and the literature therein. 

Learning a multi-dimensional data set is like approximating a multivariate function. 
The computation of a good approximation suffers from the curse of dimension.  
For example, suppose that  $f\in C([0, 1]^d)$ with $d\gg 1$.
One usually uses Weierstrass theorem to have a polynomial $P_f$ of degree $n$ such that 
$$
\|f- P_f\|_\infty \le \epsilon
$$ 
for any given tolerance $\epsilon>0$. As the dimension of polynomial 
space $= {n+d\choose n}\approx n^d$ when
$n >d$, one will need at least $N=O(n^d)$ data points in $[0, 1]^d$ to distinguish 
different polynomials in $\mathbb{P}_n$ 
and hence, to determine this $P_f$. Notice that many good approximation schemes enable $P_f$ to 
approximate $f$ at the rate of $O((1/n)^{m})$ if $f$ is of $m$ times differentiable.   
In terms of the number $N$ of data points which should be greater than or equal to the 
dimension of polynomial space $\mathbb{P}_n$,  i.e. $N\ge n^d$, 
the order of approximation is $O(1/(N^{1/d})^m)$. When $d\gg 1$ is bigger, 
the order of approximation is {less}. This phenomenon is the well-known curse of dimensionality. 
Sometimes, such a computation is also called intractable. 

Similarly, if one uses a tensor product of B-spline functions to approximate 
$f\in  C([0, 1]^d)$,  one needs to subdivide the $[0, 1]^d$ into $n^d$ small 
subcubes by hyperplanes parallel to the axis planes.  As over each small subcube, 
the spline approximation $S_f$  of $f$ is a polynomial of
degree $k$, e.g., $k=3$ if the tensor product of cubic splines are used. 
Even with the smoothness, one needs $N=O(k^d)$ data points and function values at these data 
points in order to determine a polynomial piece of $S_f$ over each subcube. 
Hence, over all subcubes, one needs $O(n^dk^d)$.   
It is known that the order of approximation of $S_f$ is $O(1/n^{k+1})$ if $f$ 
is of $k+1$ times coordinatewise  differentiable.  In terms of $N=O(n^dk^d)$ points 
over $[0, 1]^d$, the approximation order of $S_f$ will be  $O(k^{k+1}/N^{(k+1)/d})$.  
More precisely,  in \cite{DHM89}, the researchers showed that 
the approximation order $O(1/N^{1/d})$ can not be improved 
for smooth functions in Sobolev space $W^{k,p}$ with $L_p$ norm  $\le 1$.
In other words, the approximation problem by using multivariate polynomials or by tensor product 
B-splines is intractable.    

Furthermore, many researchers have worked on using ridge functions, deep neural networks, and 
ReLU  and many newly invented activation functions to approximate functions in high dimension.
See \cite{M99,P99,P98,B17} for detailed statements and proofs.  
More recently, the super approximation power was introduced in \cite{SYZ21} which uses the floor function, exponential function, step function, and their compositions as the activation function and can achieve the exponential approximation rate.
Although the number of the depth and the number of the width of deep neural networks are  
quadratically dependent on the dimensionality $d\ge 2$ (cf. \cite{SYZ22}), any training of such a neural network requires an 
exponentially many data locations and function values.  
That is, the approximation problem by using these neural networks is intractable.

Let us summarize the bottleneck problem of approximating high dimensional functions as follows. 
The curse of dimensionality (COD) contains three difficult components: (1) one has to use exponentially many basis functions when approximating a high dimensional function; (2) one has to use exponentially many  values from the function to be approximated; 
and (3) the rate of convergence decreases to zero as $d\to \infty$.   In particular, it is hard or very 
expensive to obtain  an exponential amount of high dimensional data (points and functions). 
Such an amount of data is even too much for the current computer storage.  

 However, there is a way to overcome the first and third componets of the curse of dimensionality  as explained by Barron in \cite{B93}. 
Let $\Gamma_{B,C}$ be the class of functions $f$ defined over $B=\{\bfx\in \mathbb{R}^d, \|\bfx\|\le 1\}$ 
such that $C_f\le C$, where $C_f$ is defined as follows:  
$$
C_f= \int_{\mathbb{R}^d} |\omega| |\widetilde{f}(\omega)| d\omega
$$
with $|\omega|= (\omega\cdot \omega)^{1/2}$ and $\widetilde{f}$ is the 
Fourier transform of $f$. 
\begin{theorem}  [Barron, 1993]
\label{Barron93}
For every function  $f$ in $\Gamma_{B,C}$, every sigmoidal 
function $\phi$,  every  probability  measure  $\mu$,  and  every  $n  \ge   1$, 
there exists a linear combination  of  sigmoidal functions $ f_n(x)$, a shallow neural network,  
such  that  
\begin{equation}
    \label{BarronRate}
\int_B (f(\bfx) - f_n(\bfx))^2 d\mu(\bfx) \le \frac{(2C)^2}{n}.
\end{equation}
The  coefficients  of  the  linear  combination  in  $f_n$  may  be 
restricted  to  satisfy  $\sum_{k=1}^n |c_k|\le  2C$,  and  $c_0 = f(0)$. 
\end{theorem}

The Barron result shows that there exists an approximation scheme which only needs $n$ terms of 
sigmoidal functions $f_n$ and achieves the order $O(1/n)$ independent of $d$. However, it does not known 
how to obtain this approximation scheme without using an exponentially many data values.   
It is worth noting that although the approximation rate is independent 
of the dimension in the $L_2$ norm sense, the constant $C$ can be exponentially large in the worst case scenario 
as pointed out in \cite{B93}.   This work leads to many 
recent studies on the properties and structures of Barron space $\Gamma_{B,C}$ 
and its extensions, e.g. the spectral Barron spaces and the generalization using ReLU 
or other more advanced activation functions instead of sigmodal functions
above, e.g., \cite{KB18}, \cite{E20}, \cite{EMW19}, \cite{EW20}, \cite{EW20b}, \cite{SX20}, 
\cite{SX20b}, and the references therein. To the best of our knowledge, how to design algorithms to achieve the rate of convergence in 
(\ref{BarronRate}) numerically is still under study. 

For practical purposes,  to overcome the curse of dimensionality, let us use the following definition. 
\begin{definition}
\label{COD}
Fix a discrete semi-norm of interest, e.g. $\|f\|_{\cal P} =  \sqrt{ \sum_{i=1}^N (f(\bfx_i))^2/N}$, where 
$\bfx_i, i=1, \cdots, N$ are points in $[0, 1]^d$.  
When approximating a continuous function $f\in C([0,1]^d)$, if there is a scheme $S_n$ based on
$O(nd)$ basis functions and function values $f(\bfx_i), i=1, \cdots, O(nd)$ with 
$\bfx_i\in [0, 1]^d$ such that 
the error $\|f- S_n\|_{\cal PP}\le C(1/n^\alpha)$ for a positive number $\alpha$ independent of $d$, and 
a positive constant $C$ which may be dependent on $d$ quadratically 
and dependent on $f$, but independent of $n$, then we say the scheme $S_n$ does not suffer from the curse of
dimensionality, 
where $\|\cdot \|_{\cal PP}$ is another discrete semi-norm based on  much dense points than the points $\bfx_i$'s in $[0, 1]^d$. 
\end{definition}
Let us comment that the number of data (points and function values) has to be dependent on the dimensionality. 
For example, to approximate a linear polynomial $p$ in $d$-dimensional space, we need $(d+1)$ points and its
values in order to have a good approximation in general. Also, function $f$ can grows very quickly as 
and/or oscillate very quickly $d\to \infty$. There are no way to use such a small number of data (points and 
function values), i.e. $O(nd)$ samples of the values of a function $f(\bfx)=\|\bfx\|^{d^d}$ to approximate it. Thus, we have to restrict ourselves to some classes of functions  which are bounded and/or oscillate 
grow moderately. These will be more precisely described later in the paper.

In this paper, we turn our attention to Kolmogorov superposition theorem (KST) and will see how 
it plays a role in the study how to approximate high dimensional continuous function and how it can 
be used to break the  curse of dimensionality when approximating high 
dimensional functions which grows and/or oscillate moderately 
for $d=2,3$ (this paper) and $d=4, 5, 6$ (cf. \cite{S24}),  

For convenience, we
will use KST to stands for Kolmogorov superposition theorem for the rest of the paper. 
Let us start with the statement of KST using the version of G. G. Lorentz from  \cite{L62} 
and \cite{L66}.

\begin{theorem}[Kolmogorov Superposition Theorem] 
\label{kolmogorov}
There exist irrational numbers $0<\lambda_p\leq 1$ for $p=1,\cdots, d$ and strictly increasing 
$Lip(\alpha)$ functions $\phi_q(x)$ 
(independent of $f$) with $\alpha=\log_{10}2$  
defined on $I=[0,1]$ for $q=0, \cdots, 2d$ such that 
for every continuous function $f$ defined on $[0,1]^d$, 
there exists a continuous function $g(u)$, $u\in [0,d]$ such that 
\begin{equation}
\label{repres}
    f(x_1,\cdots,x_d)=\sum_{q=0}^{2d} g(\sum_{i=1}^d\lambda_i\phi_q(x_i)).
\end{equation}
\end{theorem}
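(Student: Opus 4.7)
The plan is to separate the proof into two conceptually independent stages: first, construct the universal inner functions $\phi_q$ and the weights $\lambda_i$ depending only on $d$; second, given any target $f\in C([0,1]^d)$, build the outer function $g$ by an iterative approximation scheme that converges geometrically. This split is essential because the $\phi_q$ and $\lambda_i$ must be chosen once and for all so as to serve every continuous $f$ simultaneously, whereas $g$ alone encodes the specific function $f$.

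For the first stage, I would follow Lorentz's construction based on a nested sequence of partitions of $[0,1]$. At level $k$, decompose $[0,1]$ into closed subintervals of a common small length separated by much shorter gaps, and for each shift $q=0,1,\ldots,2d$ translate this pattern by $q/(2d+1)$ of its period; this produces $2d+1$ families of intervals with the combinatorial property that for every $x\in[0,1]$, at least $d+1$ of the indices $q$ place $x$ strictly inside one of the long intervals of the $q$-th family. Define $\phi_q$ inductively, monotone and continuous, so that the images of these long intervals at successive levels are disjoint subintervals of $[0,1]$ arranged to avoid algebraic coincidences. Choose $\lambda_1,\ldots,\lambda_d$ to be $\mathbb{Q}$-linearly independent irrationals in $(0,1]$; then the map $\mathbf{x}\mapsto\sum_{i=1}^d\lambda_i\phi_q(x_i)$ sends the corresponding product cubes in $[0,1]^d$ to pairwise disjoint subintervals of $[0,d]$. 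The Lipschitz exponent $\alpha=\log_{10}2$ emerges from balancing the geometric rate at which the partition intervals shrink against the base used to encode the images.

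For the second stage, set $f_0=f$ and construct $g_k\in C([0,d])$ inductively. On each disjoint image interval coming from a good cube $Q$ at level $k$, let $g_k$ equal $\frac{1}{2d+1}f_{k-1}(\mathbf{x}_Q)$ where $\mathbf{x}_Q$ is the center of $Q$, and interpolate continuously (linearly, or with a small bump cutoff) between these values. Using the combinatorial guarantee that at least $d+1$ of the $2d+1$ terms $g_k(\sum_i\lambda_i\phi_q(x_i))$ contribute essentially the target constant, while the remaining at most $d$ terms are bounded by $\|g_k\|_\infty$, a direct estimate yields $\|f_k\|_\infty\leq\theta\,\|f_{k-1}\|_\infty$ for some $\theta<1$ depending only on $d$, where $f_k:=f-\sum_{j=0}^{k}\sum_{q=0}^{2d}g_j(\sum_i\lambda_i\phi_q(x_i))$. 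Hence $g:=\sum_{k\geq 0}g_k$ converges uniformly to a continuous function on $[0,d]$, and passing to the limit gives the representation (\ref{repres}).

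The main obstacle is the first stage: arranging the $\phi_q$ to be simultaneously monotone, $\mathrm{Lip}(\log_{10}2)$, and to satisfy the multidimensional disjointness property under the weighted sum, all independently of $f$. This forces careful bookkeeping of how the interval widths shrink from one level to the next, since the required Hölder regularity and the arithmetic separation pull in opposite directions and must be reconciled by the precise base-$10$ construction. By comparison, the iterative construction of $g$ in the second stage is essentially a geometric-contraction argument that runs smoothly once the inner functions are in hand.
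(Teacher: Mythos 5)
Your proposal follows the same route as the source the paper relies on: Theorem~\ref{kolmogorov} is stated without proof, citing Lorentz, and your two-stage argument (shifted interval families with the $(d+1)$-of-$(2d+1)$ covering property, rationally independent $\lambda_i$ to keep the images of distinct cubes disjoint, the $\mathrm{Lip}(\log_{10}2)$ exponent coming from the decimal bookkeeping, and the geometric-contraction construction of $g=\sum_k g_k$) is precisely the construction of \cite{L66}, pp.~168--174, which the authors themselves implement numerically. The sketch is correct in structure and identifies all the key lemmas; the only caveat is that the first stage you defer is where essentially all of the technical content of the full proof resides.
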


\noindent
Note that $\phi_q, q=0, \cdots, 2d$ are called inner functions while $g$ 
is called outer function. In the original version of KST (cf. \cite{K56}, \cite{K57}), 
there are $2d+1$ outer functions and $d(2d+1)$ inner functions.  
G. G. Lorentz simplified the construction to have one outer function and $2d+1$ inner functions 
with $d$ irrational  numbers $\lambda_i$. See \cite{MP99} for KST with one outer function and $d(2d+1)$
inner functions.  
It is known that $2d+1$ can not be smaller (cf. 
\cite{D63}) and 
the smoothness of the inner functions in the original version of the KST can be improved to be 
Lipschitz continuous (cf. \cite{F67}) while the continuity of $\phi_q$ in the Lorentz version 
can be improved to be $Lip_\alpha$ for
any $\alpha\in (0, 1)$ as pointed out in \cite{L66} at the end of the chapter.   
There were Sprecher's constructive proofs (cf. \cite{S63, S65a, S65b, S66, S72, S93, S96, S97})
which were finally correctly established in \cite{B09} and \cite{BG09}.  An excellent 
explanation of the history on the KST can be found in \cite{M21} together with a constructive 
proof similar to the Lorentz construction.

Recently, KST has been extended to unbounded domain in \cite{L21} and has been actively studied   
during the development of the neural network computing (cf. e.g. \cite{C89}, \cite{MM92}, 
\cite{P99}, \cite{MY20}) as well as during the fast growth period of  
the deep learning computation recently.  Hecht-Nielsen \cite{H87} is among the first to explain 
how to use the 
KST as a feed forward neural network in 1987. In particular, Igelnik 
and Parikh \cite{IP03} in 2003 proposed an algorithm of the neural network using spline functions
to approximate both the inner functions   and outer function $g$.  
More recently, in \cite{KAN24}, the researchers have created a new way to implement two layer neural networks inspired by the Kolmogorov superposition theorem, which experimentally has a better performance than the two Multi-Layer Perceptrons (MLPs).
See also \cite{FFS22} for another efficient
implementation and numerical results. 

It is easy to see that the formulation of the KST 
is similar to the structure of a neural 
network where the inner and outer functions can be thought as two hidden layers when 
approximating a continuous function $f$.  
However, one of the main obstacles is that the outer function $g$ depends on  $f$ 
and furthermore, $g$ can vary wildly even if $f$ is smooth as explained in \cite{Girosi1989}.  
Schmidt-Hieber \cite{Schmidt2021} used a version of KST in \cite{B09} and approximated 
the inner functions by using a neural network of multiple layers to form a deep 
learning approach for approximating any H\"older continuous functions.  
Montanelli and Yang in \cite{MY20} used very deep neural networks to approximate the 
inner and outer functions to obtain the approximation rate   $O(n^{-\frac{1}{\log d}})$ 
for a function $f$ in the class $K_C([0,1]^d)$ (see \cite{MY20} for detail).  
That is, the curse of dimensionality  is lessened.    To the best of our knowledge, 
the approximation bounds in the existing work for general function still suffer the 
curse of dimensionality unless $f$ has a simple outer function $g$ as explained in this paper, 
and how to characterize the class of functions which has moderate outer functions remains an 
open problem. However, in the last section of this paper, we will provide a numerical method to test if a 
multi-dimensional continuous function is Kolmogorov-Lipschitz continuous.

The development of the neural networks obtains a great speed-up 
by using the ReLU function instead of a signmoidal activation function. 
Sonoda and Murata1 \cite{SM15} showed that the ReLU is an approximator and 
established the universal approximation theorem, i.e. Theorem~\ref{SM15} below.  
Chen in \cite{C16} pointed out that the computation of  layers in the deep learning algorithm 
based on ReLU is a linear spline and studied the upper bound on the number of knots needed 
for computation.  Motivated by the work from \cite{C16}, 
Hansson and Olsson in \cite{HO17} continued the study and gave another justification for the 
two layers of deep learning are  a combination of linear spline   and used 
Tensorflow to train a deep learning machine for approximating the well-known Runge function 
using a few knots.  There are several dissertations written 
on the study of the KST for the neural networks and
data/image classification. See \cite{B08},  \cite{B09}, \cite{F10}, \cite{L15}, and etc..  

The main contribution of this work is  we propose to study the rate of approximation for ReLU neural network through KST. We propose a special neural network structure via the representation of KST, which can achieve a dimension independent approximation rate 
$O(1/n)$ with the approximation constant increasing quadratically in the dimension when approximating a dense subset of continuous functions. The number of parameters used in such a network increases linearly in $n$. Furthermore, we shall provide a numerical scheme to practically approximate $d$ dimensional continuous functions by using at most $O(dn)$ number of pivotal locations for function value 
evaluattion instead of the whole equally-spaced $O(n^d)$ data locations, and such a set of pivotal locations are independent of 
target functions. 

The subsequent sections of this paper are structured as follows. 
In section \ref{sec2}, we explain how to use KST to approximate multivariate continuous functions without the curse of dimensionality for a certain class of functions. We also establish the approximation result for any continuous function based on the modulus of continuity of the outer function. 
In section \ref{sec3}, we introduce KB-splines and its smoothed version LKB-splines. We will show that 
KB-splines are indeed the basis for functions in $C([0,1]^d)$. 
In section \ref{sec4}, we numerically demonstrate in 2D and 3D that LKB-splines can approximate functions in $C([0,1]^d)$ very well. Furthermore, we provide a computational strategy based on matrix cross approximation to find a sparse solution using a few number of LKB-splines to achieve the same approximation order as the original approximation.  This leads to 
the new concept of pivotal point set from any dense point set $P$ over $[0,1]^d$ such that the discrete least squares (DLS) fitting based on the pivotal point set has the similar rooted mean
squares error to the DLS fitting based on the original data set $P$. 

\section{Universal Approximation via KST}
\label{sec2}
{We will use $\sigma_1$ to denote ReLU function through the rest of discussion.}
It is easy to see that one can use linear splines to approximate 
inner (continuous and monotone increasing) functions $\phi_q, 
q=0, \cdots, 2d$ and also approximate the outer (continuous) function $g$. We refer to 
Theorem 20.2 in \cite{P81}. 
 On the other hand, 
we can easily see that any linear spline function can be written in terms of linear 
combination of ReLU functions and vice versa, see, e.g. \cite{DD19}, and \cite{DHP21}. 
(We shall include another proof later in this paper.) Hence, we have 
$$
L_q(t):=\sum_{j=1}^{N_q}c_{q,j}\sigma_1(t-y_{qj}) \approx \phi_q(t)
$$
for $q=0, \cdots, 2d$ and
$$
S_g(t):= \sum_{k=1}^{N_g} w_k \sigma_1(t-y_k) \approx g,
$$
where $g$ is the outer function of a continuous function $f$. Based on KST and the universal approximation theorem (cf. \cite{C89}, \cite{Hornik1991}, \cite{P99}), it follows that
\begin{theorem}[Universal Approximation Theorem (cf. \cite{SM15}]
\label{SM15}
Suppose that $f\in C([0, 1]^d)$ is a continuous function. For any given $\epsilon>0$, there 
exist coefficients $w_k, k=1, 
\cdots, N_g$, $y_k \in [0, d], k=1, \cdots, N_g$, $c_{q,j}, j=1, \cdots, N_i$ 
and $y_{q,j} \in [0, 1], j=1, \cdots, N_i$  such that 
\begin{equation}
\label{splineapp2}
|f(x_1, \cdots, x_d)  -   \sum_{q=0}^{2d} \sum_{k=1}^{N_g} w_k \sigma_1(\sum_{i=1}^d \lambda_i 
\sum_{j=1}^{N_q}c_{q,j}\sigma_1(x_i-y_{qj})-y_k)|\le \epsilon. 
\end{equation}
\end{theorem}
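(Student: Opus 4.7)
The plan is to combine the Kolmogorov representation of Theorem~\ref{kolmogorov} with two linear spline (equivalently, ReLU) approximations: one for each inner function $\phi_q$ on $[0,1]$, and one for the outer function $g$ on $[0,d]$. Since KST yields continuous functions $\phi_0,\ldots,\phi_{2d}$ and $g$ with
$$f(\bfx)=\sum_{q=0}^{2d}g\Big(\sum_{i=1}^d\lambda_i\phi_q(x_i)\Big),$$
the task reduces to showing that substituting the displayed ReLU sums $L_q$ for $\phi_q$ inside $g$, and $S_g$ for $g$, produces a total error bounded by $\epsilon$.

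First I would fix $\epsilon>0$ and continuously extend $g$ to a slightly larger interval $[-\delta_0,d+\delta_0]$ (for instance by constant extension at the endpoints). Let $\omega_g$ denote the modulus of continuity of this extension, and choose $\delta_1>0$ small enough that $\omega_g(d\delta_1)\le \epsilon/(2(2d+1))$. By the density of linear splines in $C[a,b]$ (Theorem~20.2 in \cite{P81}) and the standard fact that any linear spline on an interval is a linear combination of ReLU functions, I can select $L_q(t)=\sum_{j=1}^{N_q}c_{q,j}\sigma_1(t-y_{q,j})$ with $\|L_q-\phi_q\|_{\infty,[0,1]}\le \delta_1$ for $q=0,\ldots,2d$, together with $S_g(u)=\sum_{k=1}^{N_g}w_k\sigma_1(u-y_k)$ satisfying $\|S_g-g\|_{\infty,[-\delta_0,d+\delta_0]}\le \epsilon/(2(2d+1))$.

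Next I would propagate the error term by term. For each $q$, the true argument $u_q:=\sum_i\lambda_i\phi_q(x_i)\in[0,d]$ is replaced by $\tilde u_q:=\sum_i\lambda_i L_q(x_i)$, and because each $\lambda_i\in(0,1]$,
$$|u_q-\tilde u_q|\le \sum_{i=1}^d\lambda_i\,\|L_q-\phi_q\|_\infty\le d\delta_1.$$
Hence $|g(u_q)-g(\tilde u_q)|\le \omega_g(d\delta_1)\le \epsilon/(2(2d+1))$, while $|g(\tilde u_q)-S_g(\tilde u_q)|\le \epsilon/(2(2d+1))$ from the outer-spline bound. Summing these two contributions over the $2d+1$ outer terms and applying the triangle inequality gives the total error bound $\epsilon$, which is exactly (\ref{splineapp2}).

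The main obstacle, in my view, is not any individual estimate but the bookkeeping surrounding the domain of $g$ after perturbation: $\tilde u_q$ may lie slightly outside $[0,d]$, so one must either extend $g$ (and thus $S_g$) continuously to an enlarged interval or clip $L_q$ so that the argument remains in $[0,d]$; the extension route above is the cleanest. A secondary but harmless subtlety is that $\delta_1$ depends on the modulus $\omega_g$ of the outer function, which itself depends on $f$, so $N_q$ and $N_g$ depend on $f$ and $\epsilon$ but no a priori smoothness hypothesis on $f$ is used. This makes the conclusion a qualitative universal approximation statement rather than a rate, consistent with the subsequent sections where rates are obtained only under additional structural assumptions on $g$.
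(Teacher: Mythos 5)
Your proposal is correct and follows essentially the same route the paper takes (and spells out later in the proofs of Theorems~\ref{mjlai2019} and~\ref{mjlai2022}): apply KST, replace each $\phi_q$ and $g$ by linear splines written as ReLU sums via Theorem~20.2 of \cite{P81}, and propagate the two error sources through the $2d+1$ outer terms by the triangle inequality. The only differences are cosmetic — you split the error at $g(\tilde u_q)$ using the modulus of continuity of $g$, whereas the paper splits at $S_g(u_q)$ and uses the continuity of the interpolatory spline $S_g$, and your explicit extension of $g$ beyond $[0,d]$ to handle perturbed arguments is a point the paper leaves implicit.
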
   
\noindent
{In fact, many results similar to the above (\ref{splineapp2}) have been established using other activation functions
including learnable activation functions (cf. e.g. \cite{C89}, \cite{K92}, \cite{MY20}, \cite{KAN24}, and etc.).} \textcolor{black}{We shall present a rate of convergence in this section. See 
(\ref{subcurse}). }


\subsection{Kolmogorov-Lipschitz (KL) Functions}
To establish the rate of convergence for Theorem~\ref{SM15}, we introduce a new concept called Kolmogorov-Lipschitz (KL)
continuity. 
For each continuous function $f\in C([0, 1]^d)$, 
let $g_f$ be the outer function associated with $f$. We define 
\begin{equation}
\label{G1}
{\rm KL}:=\{f:  \hbox{ the outer function $g_f$ is Lipschitz continuous over $[0, d]$} \}
\end{equation}
be the class of Kolmogorov-Lipschitz (KL) continuous functions.  
Note that when $f$ is a constant, its outer function $g=\frac{1}{d+1}f$ 
is also constant (cf. \cite{B08}) and hence, is
Lipschitz continuous.  That is, the function class KL is not empty.  Furthermore, 
we can use any univariate Lipschitz continuous function $g$ such as $g(t)=Ct, g(t)=\sin(Ct), g(t)=\exp(-Ct)$,  
$g(t)=\sin(Ct^2/2)$, etc..
over $[0, d]$ to define a multivariate  function $f$ by using the formula (\ref{repres}) of KST, where $C$ is any constant. 
Then these 
newly defined $f$ are continuous over $[0, 1]^d$, and belong to the function class KL with Lipschitz constant $C$
which is independent of the dimensionality $d$.  

On the other hand, we can use $g(t)=t^d$ or $g(t)=t^{d^d}$ or $g(t)=\sin(d^d t\pi)$ over $[0, d]$ to define a multivariate continuous function by the KST formula 
(\ref{repres}) in $C[0, 1]^d)$. These functions either grow exponentially when $d\ge 3$ is large or oscillate very quickly  that spoil the fun of constructing an approximation scheme which overcomes the curse of dimensionality. In general, such pathological functions will seldom appear in practice.  
For practical purposes, we shall restrict some classes of multidimensional functions to avoid 
these pathological functions. To do so, we let $\bigcup_{d=2}^\infty C([0, 1]^d)$ be the algebraic union of these continuous function spaces
$C([0, 1]^d)$'s.  Note that any function in $C([0,1]^d)$ can be extended naturally to be a function in $C([0,1]^{d+1})$ and hence, can be extended to be
a continuous function in infinitely many variables.  Hence, we define by 
\begin{equation}
    \label{newfunctionspace}
    C([0,1]^\infty) = \hbox{closure of } \bigcup_{d=2}^\infty C([0, 1]^d) \hbox{ in the $L^\infty$ norm}
\end{equation}
the Banach space of the maximal norm completion of $\bigcup_{d=2}^\infty C([0, 1]^d)$.  
Furthermore, we say a function $f\in C([0, 1]^\infty)$ is 
a Kolmogorov-Lipschitz (KL) continuous function with KL constant $C$ if there is a sequence of KL continuous functions $f_n$ 
in $C([0, 1]^{d_n})$ with KL constant $\le C$ such that $f_n\to f$ in the maximal norm.  Let 
$KL([0,1]^\infty)$ be the class of all Kolmogorov-Lipschitz functions in $C([0, 1]^\infty)$.  
We shall show that these functions in $KL([0,1]^\infty)$ will be dense in  $C([0, 1]^\infty)$. \textcolor{black}{
We shall use this space $ C([0,1]^\infty)$ to explain the curse of dimensionality later in the paper.}

It is easy to see that the class KL is dense in $C([0,1]^d)$ by Weierstrass approximation theorem for any 
fixed dimensionality $d>1$. See Theorem~\ref{LS23} below.  
For another example, let $g(t)$ be a B-spline function of degree $k\ge 1$, the associated multivariate function is in KL. We shall use such B-spline functions for the outer function $g$ approximation in a later section. 

\begin{theorem}
\label{main0}
\label{LS23} Fix any integer $d\ge 2$. 
For any $f\in C([0,1]^d)$ and any $\epsilon>0$, there exists a KL continuous function $K_\epsilon$ dependent on $\epsilon$ such that 
\begin{equation}
    \|f-K_\epsilon\|_{\infty}\leq \epsilon.
\end{equation}
Furthermore, the Kolmogorov-Lipschitz (KL) constant $C$ of $K_\epsilon$ is  $C = O(n^2 4^{2n}\|g_f\|_{\infty}/d)$, where $g_f$ is the outer
function of $f$ and $n$ is dependent on $\epsilon$ as described in the proof. 
\end{theorem}
\begin{proof}
    By the KST, we can write $f(x_1,\cdots,x_d)=\sum_{q=0}^{2d+1}g(\sum_{i=1}^d\lambda_i\phi_q(x_i))$. By Weierstrass approximation theorem, there exists a polynomial $p_\epsilon$ such that $|p_\epsilon(t)-g(t)|\leq\frac{\epsilon}{2d+1}$ 
    for all $t\in [0,d]$. Such a polynomial $p_\epsilon$ is certainly a  Lipschitz continuous function over $[0, d]$ whose Lipschitz constant will be given soon.   
    
    Therefore, by letting $K_\epsilon(x_1,\cdots,x_d)=\sum_{q=0}^{2d+1}p_\epsilon(\sum_{i=1}^d\lambda_i\phi_q(x_i))\in KL$, we have a simple formula
\begin{align*}
    |f(x_1,\cdots,x_d)-K_{\epsilon}(x_1,\cdots,x_d)|&=|\sum_{q=0}^{2d+1}g(\sum_{i=1}^d\lambda_i\phi_q(x_i))-\sum_{q=0}^{2d+1}p_\epsilon(\sum_{i=1}^d\lambda_i\phi_q(x_i))|  \\
    &\leq \sum_{q=0}^{2d+1}|g(\sum_{i=1}^d\lambda_i\phi_q(x_i))-p_\epsilon(\sum_{i=1}^d\lambda_i\phi_q(x_i))| \\
    &\leq (2d+1)\cdot \frac{\epsilon}{(2d+1)} =\epsilon.
\end{align*}
To specify the Lipschitz constant of the polynomial $p_\epsilon$, we recall the well-known Fej\'er-Hermite 
polynomials which is defined by
\begin{equation}
\label{HFoperator}
F_n(g;x) = \sum_{i=1}^n g(x_i) \bigl[1-2Dl_i(x_i) (x-x_i)\bigr]L_i^2(x)
\end{equation}
where $L_i(x) = \displaystyle \prod_{j=1\atop j\not=i}^n \frac{(x-x_j)}{
(x_i-x_j)}, i=1,\cdots, n $ and $\{x_1,\cdots,x_n\}\subseteq [a,b]$ is a set
of distinct knots. We refer to \cite{C66} for detail.  
Furthermore, for $[a,b]=[-1, 1]$ and the distinct knots  chosen to be the zero of Tchebysheff polynomial $T_n$ 
of degree $n$, we have
\begin{eqnarray*}
F_n(g;x) &=& {T_n^2(x)\over n^2} \sum_{i=1}^n g(x_i) {1-xx_i\over (x-x_i)^2},
\end{eqnarray*}
where $x_i = \cos (2i-1) \pi/2n,  i=1,\cdots,n.$  (See, p. 79 of \cite{C66}.)  When $g\in C[-1,1]$, 
since $F_n(1;x) =1,$ for $x\in [-1,1]$, we have
$$
F_n(g;x)-g(x) = {T_n^2(x)\over n^2} \sum_{i=1}^n (g(x_i)-g(x)) {1-xx_i\over (x-x_i)^2}.
$$
\begin{eqnarray*}
\bigl|F_n(g;x)-g(x)\bigr| &\le &{T^2_n(x)\over n^2}\Biggl(\sum_{|x-x_i|<\delta}
\epsilon {1-xx_i\over (x-x_i)^2} + \sum_{|x-x_i|\ge\delta}
2\Vert g\Vert_\infty {1-xx_i\over (x-x_i)^2} \Biggr) \\
&\le & {T_n^2(x)\over n^2} \Biggl(\epsilon \sum_{i=1}^n {1-xx_i\over (x-x_i)}
+ {2\Vert g\Vert_\infty\over \delta^2}\sum_{i=1}^n (x-x_i)^2 {1-xx_i\over
(x-x_i)^2}\Biggr)\\
&=&\epsilon + {2\Vert g\Vert_\infty\over \delta^2 n^2} \sum_{i=1}^n (1-xx_i) 
\le \epsilon + {4\Vert g\Vert_\infty \over \delta^2n^2}\cdot n \le 
 2\epsilon \qquad \hbox{ when $n$ large enough} . 
\end{eqnarray*}
In fact, $n$ is so large such that $n\ge \sqrt{ 2\|g\|_\infty/(\delta^2\epsilon)}$. 

Let us convert the polynomial $F_n$ to be over $[0, d]$ with $g=g_f\in C[0, d]$, the above estimate can be carried
out as well to establish the convergence.  
Now let us  estimate its Lipschitz constant of $F_n$. It is easy to see 
the Fej\'er-Hermite polynomial is $F_n(g; -1+ 2x/d)$ over $[0, d]$ and more precisely,  
\begin{eqnarray*}
F_n(g;-1+2x/d) &=& {T_n^2(-1+2x/d)\over n^2} \sum_{i=1}^n g_f(y_i) {2xd +2dy_i-4xy_i \over 4(x-y_i)^2},
\end{eqnarray*}
where $y_i=d(x_i+1)/2\in [0, d]$ for $i=1, \cdots, n.$ If we write $F_n(g; -1+2x/d)= a_0x^{2n-1}+ a_1x^{2n-2}+\cdots$, then 
$$
a_0 = \lim_{x\to \infty} \frac{ F_n(g; -1+2x/d)}{x^{2n-1}} = \frac{1}{n^2} 
\lim_{x\to \infty}  {2^{2n}T_n^2(-1+2x/d)\over d^{2n}(2x/d-1)^{2n} } 
\sum_{i=1}^n g(y_i) {x(2xd +2dy_i-4xy_i) \over 4(x-y_i)^2}
$$
and hence, $|a_0| \le 4^{2n}/d^{2n} \Vert g\Vert_\infty \sum_{i=1}^n |d/2-y_i|$ since the leading coefficient of
$T_n$ is $2^n$. We know that the Lipschitz constant 
$C$ of $F_n(g;-1+2x/d)$ is bounded by $\|F'_n(g;\cdot)\|_\infty\le (2n-1)(|a_0|d^{2n-2}+|a_1|d^{2n-3}+\cdots ) =
O((2n-1) |a_0| d^{2n-2})$ when $d\gg 2$. It now follows that the Lipschitz constant $C$ of $F_n$ is 
\begin{equation}
\label{keyconstant}
    C \le \frac{(2n-1) d^{2n-2} 4^{2n}}{d^{2n}}\|g\|_{\infty} nd \le (2n^2)4^{2n} \|g\|_{\infty}/d.
\end{equation}
These finish the proof. 
\end{proof}

We are now ready to establish one of our main results in this paper. 
\begin{theorem}
\label{main2}
The class ${\rm KL}([0,1]^\infty)$ is dense in $C([0,1]^\infty)$. 
\end{theorem}
\begin{proof}
    Suppose that $f\in C([0,1]^\infty)$.  For any $\epsilon>0$, we first find $f_d
    \in C([0,1]^d)$ such that $\|f - f_d\|_\infty \le \epsilon/2$.  Since $f$ is bounded, so
    is $f_d$.  We use Theorem~\ref{main0} to have a KL polynomial $P_\epsilon$ such that
    $$ \|f_d - P_\epsilon\|_\infty \le \epsilon/2.$$
    It follows that $\|f- P_\epsilon\|_\infty \le \|f- f_d\|_\infty + \|f_d - P_\epsilon\|_\infty
    \le \epsilon$.  
    Note that based on 
    the constructive proof of Kolmogorov superposition theorem, we know the outer function 
    $g_f$ is bounded and hence, the Kolmogorov-Lipschitz constant $C$ is bounded \textcolor{black}{independent of
    $d$} based on the estimate in (\ref{keyconstant}).  
\end{proof}


Based on the discussion above, we now focus on the multidimensional functions which are Kolmogorov-Lipschitz continuous functions.  

\section{Approximation via ReLU Neural Networks}
In this section, we study the approximation of multivariate continuous functions by using the standard ReLU neural networks. 
Note that the neural network being used for approximation in expression (\ref{splineapp2}) is a special class of neural network with two hidden layers of 
widths $(2d+1)dN_q$ and $(2d+1)N_g$ respectively. Let us call this special 
class of neural networks the 
Kolmogorov network, or K-network in short and use $\mathcal{K}_{m,n}$ to denote the K-network 
of two hidden layers with widths $(2d+1)dm$ and $(2d+1)n$ based on ReLU activation function, i.e.,
\begin{equation}
\mathcal{K}_{m,n}(\sigma_1) =\{
\sum_{q=0}^{2d}\sum_{k=1}^{dn} w_k \sigma_1(\sum_{i=1}^d 
\sum_{j=1}^{m}c_{qj}\sigma_1(x_i-y_{qj})-y_k), w_k, c_{qj}\in \mathbb{R}, y_k\in [0, d], 
y_{qj}\in [0, 1]\}.
\end{equation}
The parameters in $\mathcal{K}_{m,n}$ are $w_k,y_k$, $k=1,\cdots,dn$, and $c_{qj},y_{qj}$, $q=0,\cdots,2d$, $j=1,\cdots,m$. Therefore the total number of parameters equals to $2dn+2(2d+1)m$.
In particular if $m=n$, the total number of parameters in this network is  $(6d+2)n$. 
We are now ready to state another one of the main results in this paper. 

\begin{theorem} 
\label{mjlai2019}
\textcolor{black}{Let $f\in C([0, 1]^d)$ and assume $f$ is in the class $KL([0,1]^\infty)$.}  
Let $C_f$ be the Kolmogorov-Lipschitz constant of $f$, i.e. the Lipschitz constant of the outer function $g_f$ associated with $f$.
We have 
\begin{equation}
\label{subcurse}
\inf_{ s\in \mathcal{K}_{n, n}(\sigma_1) } 
\|f - s\|_{C([0,1]^d) } \le \frac{C_f(2d+1)^2}{n}.
\end{equation}
\end{theorem}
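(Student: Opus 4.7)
The plan is to construct an explicit element $\tilde f \in \mathcal{K}_{n,n}(\sigma_1)$ by replacing both the K-outer function $g$ and each K-inner function $\phi_q$ in the Kolmogorov representation~(\ref{repres}) by continuous piecewise linear interpolants, and then to bound the composition error using the Lipschitz constant $C_f$ of $g$.

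On the outer side I would let $S_g$ be the piecewise linear interpolant of $g$ on $[0,d]$ at $dn$ equispaced knots; since $g$ is Lipschitz with constant $C_f$, this gives $\|g - S_g\|_\infty \le C_f/(2n)$. On the inner side, relying on the strengthened form of KST in which the K-inner functions are genuinely Lipschitz continuous (the improvement cited in \cite{F67} and mentioned just after Theorem~\ref{kolmogorov}), I would let $L_q$ be the piecewise linear interpolant of $\phi_q$ on $[0,1]$ at $n$ equispaced knots, yielding $\|\phi_q - L_q\|_\infty \le L_\phi/(2n)$, where $L_\phi$ is a dimension-independent Lipschitz bound for the $\phi_q$'s. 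Writing each $L_q$ and $S_g$ as a linear combination of ReLU atoms $\sigma_1(\cdot - y_j)$ (with additive constants absorbed into translations of the outer-layer knots $y_k$, which are free in $[0,d]$) places
$$\tilde f(\bfx) := \sum_{q=0}^{2d} S_g\!\left(\sum_{i=1}^d \lambda_i L_q(x_i)\right)$$
inside $\mathcal{K}_{n,n}(\sigma_1)$.

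The core estimate is then triangle inequality plus the Lipschitz bound on $g$. Setting $u_q := \sum_i \lambda_i \phi_q(x_i)$ and $v_q := \sum_i \lambda_i L_q(x_i)$,
$$|g(u_q) - S_g(v_q)| \le C_f|u_q - v_q| + \|g - S_g\|_\infty \le C_f \cdot d \cdot \tfrac{L_\phi}{2n} + \tfrac{C_f}{2n},$$
where I used $\lambda_i \le 1$ and the two interpolation bounds. Summing over $q = 0, \ldots, 2d$ gives the overall estimate $(2d+1)C_f(d L_\phi + 1)/(2n)$, which absorbs into $C_f(2d+1)^2/n$ once one observes that Fridman's construction makes $L_\phi$ a fixed absolute constant (and in particular one small enough that $dL_\phi + 1 \le 2(2d+1)$ for all $d$).

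The main obstacle is the inner-function regularity. The Lorentz version quoted in Theorem~\ref{kolmogorov} only gives $\phi_q \in Lip(\log_{10}2)$, whose linear-spline interpolant has error $O(n^{-\log_{10}2})$, which is insufficient for the claimed linear rate in $1/n$. The proof therefore hinges on invoking the KST variant whose K-inner functions are genuinely Lipschitz with a uniform, $d$-independent Lipschitz constant; the remaining bookkeeping (expressing piecewise linear functions as ReLU sums, tracking constants under composition, and collecting the $(2d+1)$ factor from the outer sum together with the $d$ factor from the inner sum) is routine and produces precisely the quadratic dependence on $d$ appearing on the right-hand side of~(\ref{subcurse}).
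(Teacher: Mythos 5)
Your overall architecture---replace $g$ by a piecewise linear $S_g$ and each $\phi_q$ by a piecewise linear $L_q$, then combine the triangle inequality with the Lipschitz property of $g$---is exactly the paper's, and your core estimate $|g(u_q)-S_g(v_q)|\le C_f|u_q-v_q|+\|g-S_g\|_\infty$ is a correct variant of theirs (they instead bound $|g(u_q)-S_g(u_q)|+|S_g(u_q)-S_g(v_q)|$ using that the interpolant $S_g$ is $2C_f$-Lipschitz, as in (\ref{key2})). The gap is in how you handle the inner functions. You correctly observe that the $Lip(\log_{10}2)$ regularity of the Lorentz inner functions only yields an interpolation error $O(n^{-\log_{10}2})$, but your fix---invoking Fridman's improvement to get genuinely Lipschitz $\phi_q$---does not work as stated: the paper's own discussion after Theorem~\ref{kolmogorov} notes that Fridman's Lipschitz improvement applies to the \emph{original} Kolmogorov form with $d(2d+1)$ inner functions, whereas the Lorentz single-outer-function form (the one used to define the KL class, the constant $C_f$, and the network $\mathcal{K}_{n,n}$) only admits inner functions in $Lip_\alpha$ for $\alpha<1$. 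Switching representations would change both the meaning of ``the K-outer function associated with $f$'' and the architecture being approximated, so the theorem as stated would no longer be the one you are proving; your closing assertion that $L_\phi$ is a small absolute constant is also unsupported.

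The missing idea is that no Hölder or Lipschitz regularity of $\phi_q$ is needed at all. Each $\phi_q$ is monotone increasing from $[0,1]$ into $[0,1]$, hence of bounded variation with $V_0^1(\phi_q)\le 1$, and the paper's Lemma~\ref{key0} (free-knot, i.e.\ adaptively placed, linear spline approximation of BV functions) gives $\|\phi_q-L_q\|_\infty\le V_0^1(\phi_q)/(n+1)\le 1/(n+1)$ with only $n$ knots. The point is that the knots of $L_q$ are chosen adapted to $\phi_q$ rather than equispaced, so the rate $1/(n+1)$ holds for any bounded monotone function regardless of its modulus of continuity. With this substitution your inner-error term $C_f\cdot d\cdot L_\phi/(2n)$ becomes $2C_f\cdot d/(n+1)$ (the factor $2$ coming from the Lipschitz constant of $S_g$, or absent in your version where you apply the Lipschitz bound to $g$ itself), and summing over $q=0,\dots,2d$ recovers $(2d+1)C_f/(2n)+(2d+1)\cdot 2d\,C_f/(n+1)\le (2d+1)^2C_f/n$ exactly as in the paper. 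The rest of your bookkeeping (writing linear splines as ReLU combinations, placing $\tilde f$ in $\mathcal{K}_{n,n}(\sigma_1)$) is fine.
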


The significance of this result is that we need only $(6d+2)n$ parameters to achieve the approximation rate $O(1/n)$ with the approximation constant increasing quadratically in the dimension if $C_f$ bounded or growing moderately in $d$.  
To overcome the curse of dimensionality when approximating $f$, we shall discuss how to use $O(nd)$ data points and values to achieve the order 
of the convergence in (\ref{subcurse}) in the later sections.

To prove Theorem~\ref{mjlai2019}, we need some preparations.
Let us begin with the space $\mathcal{N}(\sigma_1)=\hbox{span}\{ \sigma_1(\bfw^\top\bfx- b), 
b\in \mathbb{R}, \bfw\in \mathbb{R}^d\}$ which is the space of  shallow networks of 
ReLU functions.  
It is easy to see that all linear polynomials over $\mathbb{R}^d$ are in $\mathcal{N}(\sigma_1)$. The following
result is known (cf. e.g. \cite{DD19}).  For self-containedness, we include a different proof.  
\begin{lemma}
	For any linear polynomial $s$ over $\mathbb{R}^d$, there exist coefficients 
	$c_i \in \mathbb{R}$, bias $t_i\in \mathbb{R}$ and weights  
	$\bfw_i\in \mathbb{R}^d$ such that 
	\begin{equation}
	\label{mjlai2}
	s(\bfx) = \sum_{i=1}^n c_i\sigma_1( \bfw_i \cdot \bfx + t_i),  \forall \bfx \in [0, 1]^d.
	\end{equation}
	That is, $s\in \mathcal{N}(\sigma_1)$.  
\end{lemma}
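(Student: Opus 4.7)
The plan is to use the elementary identity $t=\sigma_1(t)-\sigma_1(-t)$, valid for every $t\in\mathbb{R}$, which lets any affine quantity be recovered as a linear combination of two ReLUs. Writing $s(\bfx)=\bfa\cdot\bfx+b$ with $\bfa\in\mathbb{R}^d$ and $b\in\mathbb{R}$, I would first handle the linear part by taking $\bfw_1=\bfa$, $\bfw_2=-\bfa$, biases $t_1=t_2=0$, and coefficients $c_1=1$, $c_2=-1$, which yields $\sigma_1(\bfa\cdot\bfx)-\sigma_1(-\bfa\cdot\bfx)=\bfa\cdot\bfx$ for every $\bfx\in\mathbb{R}^d$, in particular on $[0,1]^d$.

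Next I would handle the constant $b$ by choosing two (or even one) more neurons whose weight vector is the zero vector of $\mathbb{R}^d$, so that the ReLU evaluates to a constant independent of $\bfx$. For example, take $\bfw_3=\bfw_4=\mathbf{0}$, $t_3=1$, $t_4=1$, $c_3=b$, $c_4=0$; then $c_3\sigma_1(\bfw_3\cdot\bfx+t_3)=b\cdot\sigma_1(1)=b$. Adding the two contributions gives the representation in (\ref{mjlai2}) with $n=3$.

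Since the identity $t=\sigma_1(t)-\sigma_1(-t)$ is immediate from the definition $\sigma_1(t)=\max\{t,0\}$, there is essentially no obstacle here; the only subtlety is making sure that the constant term is absorbed into the ReLU framework, which is why allowing $\bfw_i=\mathbf{0}$ (so that a ReLU with a positive bias acts as a constant) is convenient. Alternatively, one can avoid the zero-weight trick by pushing the affine form $\bfa\cdot\bfx+b$ through the identity directly, writing $s(\bfx)=\sigma_1(\bfa\cdot\bfx+b)-\sigma_1(-\bfa\cdot\bfx-b)$, which again satisfies (\ref{mjlai2}) with $n=2$, $\bfw_1=\bfa$, $\bfw_2=-\bfa$, $t_1=b$, $t_2=-b$, $c_1=1$, $c_2=-1$. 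Either choice yields the desired membership $s\in\mathcal{N}(\sigma_1)$.
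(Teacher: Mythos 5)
Your proof is correct, and it takes a genuinely different and more elementary route than the paper's. You rely on the two-sided identity $t=\sigma_1(t)-\sigma_1(-t)$, which immediately gives $s(\bfx)=\sigma_1(\bfa\cdot\bfx+b)-\sigma_1(-\bfa\cdot\bfx-b)$ with just $n=2$ neurons, valid on all of $\mathbb{R}^d$ (not only on $[0,1]^d$); the zero-weight trick for the constant term is also legitimate here, since the definition of $\mathcal{N}(\sigma_1)$ places no restriction on $\bfw\in\mathbb{R}^d$. The paper instead uses the one-sided identity $x=\sigma_1(x)$, which only holds for $x\ge 0$ and hence ties the representation of each coordinate $x_j=\sigma_1(\bfe_j\cdot\bfx)$ to the domain $[0,1]^d$, and it obtains the constant $1$ by summing hat functions $h_i$ of a univariate linear spline partition of unity, each hat function being expressed as a three-term combination of ReLUs via formula (\ref{mjlai}). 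The paper's detour through hat functions is longer and uses more neurons, but it is not wasted effort: the identity (\ref{mjlai}) expressing an arbitrary element of $S^0_1(\mathcal{P}_n)$ in terms of ReLUs is exactly the machinery invoked later to convert the linear-spline approximants $L_q$ and $S_g$ into the K-network of Theorem~\ref{mjlai2019}. Your argument proves the lemma more cleanly and with optimal neuron count, but does not by itself supply that spline-to-ReLU conversion; if one adopted your proof, the observation that every continuous piecewise linear function is a combination of shifted ReLUs would still need to be recorded separately.
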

\begin{proof}
	It is easy to see that a linear polynomial $x$ can be exactly reproduced by using the ReLU functions.  For example,   
	\begin{equation}
	\label{key3}
	x= \sigma_1(x), \forall x\in [0, 1].
	\end{equation} 
	Hence, any component $x_j$ of $\bfx\in \mathbb{R}^d$ can be written in terms of (\ref{mjlai2}).
	Indeed, choosing $\bfw_i=\bfe_j$, from (\ref{key3}), we have 
	$$
	x_j=   \sigma_1(\bfe_j \cdot \bfx), \quad \bfx\in [0, 1]^d, j=1, \cdots, d.
	$$
	Next we claim a constant $1$ is in $\mathbb{N}(\sigma_1)$. Indeed, 
	given a partition ${\cal P}_n=\{a=x_0<x_1< \cdots <x_n=b\}$ of interval $[a, b]$, let
	\begin{eqnarray}
	\label{hatfuns}
	h_0(x) &=& \begin{cases} \displaystyle {x-x_1\over x_0-x_1} & x\in [x_0,x_1] \cr
	0 & x\in [x_1,x_n] \cr\end{cases},\\
	h_i(x) &=& \begin{cases}  {x-x_{i-1}\over x_i-x_{i-1}} & x\in [x_{i-1},x_i]\cr
	{x-x_{i+1}\over x_i-x_{i+1}} & x\in [x_i,x_{i+1}]\cr
	0 & x\not\in [x_{i-1},x_{i+1}] \cr \end{cases},
	\qquad i=1,\cdots,n-1 \\
	h_n(x) &=& \begin{cases} \displaystyle {x-x_{n-1}\over x_n-x_{n-1}} & x\in [x_{n-1},x_n]\cr
	0 & x\in [x_0,x_{n-1}] \cr \end{cases}.
	\end{eqnarray}
	be a set of piecewise linear spline functions over ${\cal P}_n$. Then we know 
	$S^0_1({\cal P}_n)=\hbox{span}\{ h_i, i=0, \cdots, n\}$
	is a linear spline space.  It is well-known that $1= \displaystyle \sum_{i=0}^n h_i(x)$. 
	Now we note the following formula:  
	\begin{equation}
	\label{mjlai}
	h_i(x) =\frac{ \sigma_1(x-x_{i-1})}{(x_i-x_{i-1})} 
	+w_i\sigma_1(x-x_i) + \frac{\sigma_1(x-x_{i+1})}{(x_{i+1}-x_{i})},
	\end{equation}
	where $w_i=-1/(x_i-x_{i-1})- 1/(x_{i+1}-x_i)$.  It follows that 
	any spline function in $S^0_1({\cal P}_n)$ can be written in terms of ReLU functions. In particular, 
	we can write $1= \sum_{i=0}^n h_i(x) = \sum_{i=0}^n c^0_i \sigma_1(x-x_i)$ by using (\ref{mjlai}).  
	
	Hence, for any linear polynomial $s(\bfx) =a + \sum_{j=1}^d c_j x_j$, we have 
	$$
	s(\bfx) = a\sum_{i=0}^n c^0_i \sigma_1(x-x_i) + \sum_{i=1}^{d} c_i \sigma_1(\bfe_j \cdot \bfx) 
	\in \mathbb{N}(\sigma_1).
	$$
	This completes the proof. 
\end{proof}

The above result shows that any linear spline is in the ReLU neual networks. Also, any ReLU neual network in $\mathbb{R}^1$ is a linear spline.   
We are now ready to prove Theorem~\ref{mjlai2019}. We begin with the standard modulus of continuity. 
For any continuous function $g\in C[0, d])$, we define the modulus of continuity of $g$ by 
\begin{equation}
\label{modulusofsmoothness}
\omega(g, h)= \max_{ x\in [0,d]\atop 0< t\le h} |g(x+t)- g(x)|
\end{equation}
for any $h>0$.  
To prove the result in Theorem~\ref{mjlai2019}, we need to recall some basic properties 
of linear splines (cf. \cite{S07}). The following result was established in \cite{P81}. 
\begin{lemma}
\label{Powell}
For any function in $f\in C[a,b]$, there exists a linear spline $S_f\in S^0_1(\triangle)$ such that 
\begin{equation}
\|f- S_f\|_{\infty, [a,b]} \le \omega(f, |\triangle|)
\end{equation}
where  $S^0_1(\triangle)$ is the space of all continuous linear splines over the partition
$\triangle=\{a=t_0<t_1<\cdots <t_n=b\}$ with $|\triangle|=\max_i |t_i-t_{i-1}|$.  
\end{lemma}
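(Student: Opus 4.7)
The plan is to take $S_f$ to be the piecewise linear interpolant of $f$ at the knots of $\triangle$, that is, to define $S_f \in S^0_1(\triangle)$ by requiring $S_f(t_i) = f(t_i)$ for $i = 0, 1, \ldots, n$. This is the most natural candidate in $S^0_1(\triangle)$, it is uniquely determined by the interpolation conditions, and it lies in the claimed space by construction.

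Next I would estimate $\|f - S_f\|_{\infty,[a,b]}$ subinterval by subinterval. Fix $i$ and $x \in [t_{i-1}, t_i]$. Writing the linear interpolant in barycentric form,
\begin{equation*}
S_f(x) = \frac{t_i - x}{t_i - t_{i-1}} f(t_{i-1}) + \frac{x - t_{i-1}}{t_i - t_{i-1}} f(t_i),
\end{equation*}
and noting that the two coefficients are nonnegative and sum to $1$, I would rewrite $S_f(x) - f(x)$ as the same convex combination of $f(t_{i-1}) - f(x)$ and $f(t_i) - f(x)$. Taking absolute values and applying the triangle inequality then yields
\begin{equation*}
|S_f(x) - f(x)| \le \frac{t_i - x}{t_i - t_{i-1}} |f(t_{i-1}) - f(x)| + \frac{x - t_{i-1}}{t_i - t_{i-1}} |f(t_i) - f(x)|.
\end{equation*}

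The key observation is that both $|x - t_{i-1}|$ and $|t_i - x|$ are bounded by $t_i - t_{i-1} \le |\triangle|$, so each term $|f(t_{i-1}) - f(x)|$ and $|f(t_i) - f(x)|$ is at most $\omega(f, |\triangle|)$ by the definition (\ref{modulusofsmoothness}) of the modulus of continuity. Since the convex combination of two quantities each bounded by $\omega(f, |\triangle|)$ is again bounded by $\omega(f, |\triangle|)$, we obtain $|S_f(x) - f(x)| \le \omega(f, |\triangle|)$ on $[t_{i-1}, t_i]$, and taking the supremum over $i$ and $x$ gives the lemma.

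Honestly, there is no real obstacle here; the proof is a direct application of the definition of the modulus of continuity to the piecewise linear interpolant. The only point one needs to keep an eye on is that the two barycentric weights are nonnegative and sum to one, which justifies collapsing the convex combination without picking up an extra constant in front of $\omega(f, |\triangle|)$. If one instead used a suboptimal bound (for instance, a straight triangle inequality without exploiting the convex combination), one would get a factor of $2$ on the right-hand side, which is weaker than the stated bound.
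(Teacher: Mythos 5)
Your proof is correct and is the standard argument: the paper does not actually prove this lemma (it simply cites \cite{P81}), but your barycentric decomposition of $f(x)-S_f(x)$ into a convex combination of $f(t_{i-1})-f(x)$ and $f(t_i)-f(x)$ is exactly the device the authors themselves use in their proof of the Lipschitz analogue, Lemma~\ref{key}. The one point worth noting is that even the plain triangle inequality applied to that convex combination already yields the constant $1$ (since the weights are nonnegative and sum to one), so your closing caveat about a possible factor of $2$ only arises if one discards the weights entirely; in any case the bound you obtain matches the statement.
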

In order to know the rate of convergence, we need to introduce the class of function of  bounded variation. 
We say a function $f$ is of bounded variation over $[a, b]$ if 
$$
\sup_{\forall a=x_0<x_1< \cdots <x_n=b} \sum_{i=1}^n |f(x_i)-f(x_{i-1})|  < \infty
$$
We let $V_a^b(f)$ be the value above when $f$ is of bounded variation. The following result is well known (cf. \cite{S07})
\begin{lemma}
\label{key0}
Suppose that $f$ is of bounded variation over $[a, b]$.  
For any $n\ge 1$,  there exists a partition 
$\triangle$ with $n$ knots such that  
$$
\hbox{dist}(f, S^0_1(\triangle))_\infty=\inf_{s\in S^0_1(\triangle)}
\|f -s \|_{\infty} \le \frac{V_a^b(f)}{n+1}. 
$$
\end{lemma}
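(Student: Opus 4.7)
The plan is to equi-distribute the total variation of $f$ across the partition and then use the piecewise linear interpolant at the chosen knots, bounding the error on each subinterval by the local variation of $f$. I would begin by introducing the variation function $V(x):=V_a^x(f)$ on $[a,b]$, which is monotone non-decreasing with $V(a)=0$ and $V(b)=V_a^b(f)$. For each $i=1,\ldots,n$, I would define the interior knot $t_i:=\inf\{x\in[a,b]: V(x)\ge iV_a^b(f)/(n+1)\}$ and set $t_0=a$, $t_{n+1}=b$. This produces a partition $\triangle$ with $n$ interior knots (hence $n+1$ subintervals), and by the additivity of the total variation together with the construction, the local variation satisfies $V_{t_{i-1}}^{t_i}(f)\le V_a^b(f)/(n+1)$ on each $[t_{i-1},t_i]$.

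Next I would take $S_f\in S_1^0(\triangle)$ to be the continuous piecewise linear interpolant determined by $S_f(t_i)=f(t_i)$ for $i=0,\ldots,n+1$. On a fixed subinterval $[t_{i-1},t_i]$, let $m_i=\min\{f(t_{i-1}),f(t_i)\}$ and $M_i=\max\{f(t_{i-1}),f(t_i)\}$, so that $S_f(x)\in[m_i,M_i]$ for all $x$ in the subinterval. The key pointwise estimate I would establish is that $|f(x)-S_f(x)|\le V_{t_{i-1}}^{t_i}(f)$, via a short case analysis: if $f(x)\in[m_i,M_i]$ then $|f(x)-S_f(x)|\le M_i-m_i\le V_{t_{i-1}}^{t_i}(f)$; otherwise the chain of partition points $t_{i-1}<x<t_i$ feeds the supremum in the definition of $V_{t_{i-1}}^{t_i}(f)$ and yields $V_{t_{i-1}}^{t_i}(f)\ge|f(x)-f(t_{i-1})|+|f(x)-f(t_i)|$, whose right-hand side dominates $|f(x)-S_f(x)|$ in either sign. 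Combining this with the equi-distribution of the previous step gives $\|f-S_f\|_{\infty,[a,b]}\le V_a^b(f)/(n+1)$, which is the desired bound.

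The step I expect to be the most delicate is not the algebra but the well-definedness of the knot selection when $V$ has jumps, which occurs precisely when $f$ has jump discontinuities. In that case the equation $V(t_i)=iV_a^b(f)/(n+1)$ may fail to admit a solution, and $V$ can be constant on intervals where $f$ is. Using the infimum definition above avoids these issues but requires checking that the resulting local variation really is bounded by $V_a^b(f)/(n+1)$; this will follow from the additivity $V_a^c(f)=V_a^b(f)+V_b^c(f)$ and from a right-limit argument handling the at most countably many points of discontinuity of $V$. Once this measure-theoretic bookkeeping is in place, the rest of the proof reduces to the clean case analysis sketched above.
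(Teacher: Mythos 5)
Your core construction is the standard one and, for \emph{continuous} functions of bounded variation, your argument is complete and correct: when $f$ is continuous, the variation function $V(x)=V_a^x(f)$ is continuous, so the intermediate value theorem gives knots with $V(t_i)=iV_a^b(f)/(n+1)$ exactly, each closed subinterval then carries variation exactly $V_a^b(f)/(n+1)$, and your three-case pointwise estimate $|f(x)-S_f(x)|\le V_{t_{i-1}}^{t_i}(f)$ for the interpolant is valid. For comparison, the paper gives no proof at all of this lemma (it is cited from Schumaker's book), and it only ever applies the lemma to the continuous, monotone inner functions $\phi_q$, for which your proof suffices.

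However, the step you flag as merely ``delicate'' is a genuine gap that cannot be repaired in the generality you state. If $f$ has a jump of size $J$ at a point $c$, then every closed subinterval of the partition having $c$ in its interior, or as an endpoint on the side where the jump sits, has variation at least $J$; hence once $J>V_a^b(f)/(n+1)$, no knot selection --- infimum definition, right limits, or otherwise --- can make all local variations $\le V_a^b(f)/(n+1)$, and your interpolant's error near $c$ is of order $J$, not $J/(n+1)$. Worse, the lemma's conclusion itself fails for discontinuous $f$: take $f=0$ on $[a,c)$ and $f=J$ on $[c,b]$, so $V_a^b(f)=J$; every continuous spline $s$ satisfies $\|f-s\|_\infty\ge\max\left(|s(c)|,|J-s(c)|\right)\ge J/2$ for every partition, contradicting the claimed bound $J/(n+1)$ as soon as $n\ge 2$. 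So the deferred ``measure-theoretic bookkeeping'' is not bookkeeping: the statement is simply false for discontinuous BV functions, and the correct resolution is to add the hypothesis that $f$ is continuous (or to allow discontinuous piecewise-linear approximants), not to refine the knot selection. With that restriction made explicit, your proof stands and covers everything the paper needs.
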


Let $L=\{ f\in C([0,1]^d):  |f(\bfx)-f(\bfy)|\le L_f |\bfx- \bfy|, \forall \bfx, \bfy\in [0, 1]^d\}$ be the class of 
Lipschitz continuous functions. We can further establish  
\begin{lemma}  
\label{key}
Suppose that $f$ is Lipschitz continuous over $[a, b]$ with Lipschitz constant $L_f$.  
For any $n\ge 1$,  there exists a partition 
$\triangle$ with $n$ interior knots such that  
$$
\hbox{dist}(f, S^0_1(\triangle))_\infty\le \frac{L_f(b-a)}{2(n+1)}. 
$$
\end{lemma}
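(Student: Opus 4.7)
The plan is to sharpen the proof of Lemma~\ref{key0} by exploiting the Lipschitz structure rather than merely using bounded variation. Applying Lemma~\ref{key0} directly would give only $L_f(b-a)/(n+1)$, since a Lipschitz function on $[a,b]$ has $V_a^b(f)\le L_f(b-a)$; this is off by a factor of $2$. So instead I would not appeal to the bounded-variation lemma and would construct the partition explicitly.

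First I would take the uniform partition $\triangle = \{t_i = a + i(b-a)/(n+1): i=0,1,\ldots,n+1\}$, which has exactly $n$ interior knots $t_1,\ldots,t_n$ and $n+1$ subintervals of common length $h = (b-a)/(n+1)$. Then I would choose $S_f \in S^0_1(\triangle)$ to be the continuous piecewise linear interpolant of $f$ at the nodes $t_i$, i.e.\ on each subinterval $[t_{i-1},t_i]$,
\begin{equation*}
S_f(x) = (1-\alpha)f(t_{i-1}) + \alpha f(t_i), \qquad \alpha = \frac{x - t_{i-1}}{h}.
\end{equation*}

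The key step is a pointwise estimate on one subinterval. Writing $f(x) - S_f(x) = (1-\alpha)(f(x)-f(t_{i-1})) + \alpha(f(x)-f(t_i))$ and using the Lipschitz bounds $|f(x)-f(t_{i-1})|\le L_f\alpha h$ and $|f(x)-f(t_i)|\le L_f(1-\alpha)h$, a convex-combination calculation gives
\begin{equation*}
|f(x)-S_f(x)| \le 2L_f\,\alpha(1-\alpha)\,h \le \frac{L_f h}{2},
\end{equation*}
where the last inequality uses $\max_{\alpha\in[0,1]}\alpha(1-\alpha)=1/4$. Substituting $h=(b-a)/(n+1)$ and taking the supremum over all subintervals yields the claimed bound, which is the reason this approach is strictly better than deducing it from Lemma~\ref{key0}.

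There is no real obstacle here; the only subtlety is making sure the $1/2$ factor really comes from the symmetric convex-combination identity and not from a cruder triangle-inequality estimate (the naive bound $|f(x)-S_f(x)|\le L_f h$ misses the factor $1/2$, and applying Lemma~\ref{key0} with $V_a^b(f)\le L_f(b-a)$ misses it as well). Exploiting that the two Lipschitz-bound terms are weighted by $(1-\alpha)$ and $\alpha$ is what produces the sharp constant.
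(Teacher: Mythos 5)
Your proof is correct and is essentially identical to the paper's: the paper also takes the linear interpolant on a uniform partition, writes $f(x)-S_f(x)$ as the same convex combination $(1-\alpha)(f(x)-f(t_{i-1}))+\alpha(f(x)-f(t_i))$, applies the Lipschitz bound to each term, and extracts the factor $1/2$ from $\max_{\alpha}\alpha(1-\alpha)=1/4$. Nothing further is needed.
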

\begin{proof}
We use a linear interpolatory spline $S_f$. Then for $x\in [x_i, x_{i+1}]$, 
\begin{eqnarray*}
f(x)- S_f(x) &=& f(x)- f(x_i)\frac{x_{i+1}- x}{x_{i+1}-x_i}- f(x_{i+1}) \frac{x-x_i}{x_{i+1}-x_i}.
\cr
&=& \frac{(x_{i+1}- x)(f(x)- f(x_i))}{x_{i+1}-x_i}+ 
\frac{(x-x_i)(f(x)- f(x_{i+1}))}{x_{i+1}-x_i} \cr
&\le& L_f \frac{(x- x_i)(x_{i+1}- x)}{x_{i+1}-x_i} + L_f\frac{(x_{i+1}-x) 
(x-x_i)}{x_{i+1}-x_i} \le  \frac{L_f}{2}(x_{i+1}-x_i).
\end{eqnarray*}
Hence, $|f(x)- S_f(x)|  \le L_f(b-a)/(2(n+1))$ if $x_{i+1}-x_i=(b-a)/(n+1)$.  
This completes the proof. 
\end{proof}

Furthermore, if $f$ is Lipschitz continuous, 
so is the linear interpolatory spline $S_f$. In fact, we have 
\begin{equation}
\label{key2}
|S_f(x)-S_f(y)|\le L_f |x-y|.
\end{equation}
We are now ready to prove one of our main results in this paper. 

\begin{proof} [Proof of   Theorems~\ref{mjlai2019}]
Since $\phi_q$ are univariate increasing functions mapping from $[0, 1]$ to $[0, 1]$, they are
bounded variation with $V_0^1(\phi_q)\le 1$.  By Lemma~\ref{key0}, 
there are linear spline functions $L_q$ 
such that $|L_q(t)- \phi_q(t)|\le 1/(n+1)$ for $q=0, \cdots, 2d$. 

For the outer function $g_f$ associated with $f$, when $g_f$ is Lipschitz continuous, there is a linear 
spline $S_g$ with $dn$ distinct interior knots over $[0, d]$ such that 
$$
\sup_{t\in [0, d]} |g_f(t)- S_g(t)| \le \frac{dC_g}{2(nd+1)}\le \frac{C_g}{2n},
$$ 
where $C_g$ is  the Lipschitz constant of $g$ by using Lemma~\ref{key}. 
Now we first have
\begin{eqnarray*}
|f(\bfx)- \sum_{q=0}^{2d} S_g(\sum_{i=1}^d\lambda_i \phi_q(x_i))| 
\le \sum_{q=0}^{2d} |g(\sum_{i=1}^d\lambda_i \phi_q(x_i)) - S_g(\sum_{i=1}^d\lambda_i \phi_q(x_i))| 
\le \frac{(2d+1)C_g }{2n}.
\end{eqnarray*}

Next since $g$ is Lipschitz continuous, so is $S_g$.  Thus, by (\ref{key2}) and Lemma~\ref{key0}, we have
\begin{eqnarray*}
|S_g(\sum_{i=1}^d\lambda_i \phi_q(x_i))- S_g(\sum_{i=1}^d\lambda_i 
L_q(x_i))|
\le C_g  \sum_{i=1}^d\lambda_i |\phi_q(x_i) -L_q(x_i)| \le \frac{d\cdot C_g }{(n+1)}.
\end{eqnarray*}

Let us put the above estimates together to have
$$
|f(\bfx)- \sum_{q=0}^{2d} S_g(\sum_{i=1}^d\lambda_i L_q(x_i))|\le
\frac{(2d+1)C_g}{2n} + \frac{(2d+1)d C_g }{(n+1)}\le \frac{(2d+1)^2 C_g }{n}.
$$
The conclusion of Theorem~\ref{mjlai2019} follows. 
\end{proof}

\subsection{Functions Beyond the KL Class} 
As the outer function $g$ may not be Lipschitz continuous, we next consider a class of 
functions which is of H\"older continuity. 
Letting $\alpha\in (0, 1]$, we say $g$ is in $C^{0,\alpha}$ if 
\begin{equation}
\label{Lipalpha}
  \sup_{x,y\in [0, d]}  \frac{|g(x)- g(y)|}{|x- y|^\alpha} \le L_\alpha(g) <\infty. 
\end{equation}

Using such a continuous function $g$, we can define a multivariate continuous  function $f$ by using the formula in  Theorem~\ref{kolmogorov}.  
In addition, we can extend the argument for the proof of Theorem~\ref{mjlai2019} 
to the setting of Kolmogorov-H\"older continuous functions and present the convergence 
in terms of Kolmogorov-modulus of smoothness. 
Let us extend the analysis of the proof of Lemma~\ref{key} to have

\begin{lemma}
\label{nkey}
Suppose that $g$ is  H\"older continuous over $[0, d]$, say $g\in C^{0,\alpha}$ with 
$L_\alpha(g)$ for some $\alpha\in (0, 1]$.  For any $n\ge 1$,  there exists a partition 
$\triangle$ with $n$ interior knots such that  
$$
\hbox{dist}(g, S^0_1(\triangle))_\infty\le \frac{L_\alpha(g) d^\alpha}{2(n+1)^\alpha}. 
$$
\end{lemma}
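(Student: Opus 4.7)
The plan is to mirror the proof of Lemma~\ref{key} essentially verbatim, replacing only the Lipschitz estimate $|g(x)-g(y)| \le L_f|x-y|$ by the H\"older estimate (\ref{Lipalpha}). So I take the uniform partition $\triangle = \{0 = x_0 < x_1 < \cdots < x_{n+1} = d\}$ of $[0,d]$ with mesh $h := d/(n+1)$, and let $S_g \in S^0_1(\triangle)$ be the continuous piecewise linear interpolant to $g$ at the knots $x_i$. This is the same choice of partition and interpolant as in Lemma~\ref{key}, and all that needs to change is the pointwise error estimate on a single subinterval.

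On each $[x_i, x_{i+1}]$, I rewrite
$$g(x) - S_g(x) = \frac{(x_{i+1}-x)\bigl(g(x)-g(x_i)\bigr)}{h} + \frac{(x-x_i)\bigl(g(x)-g(x_{i+1})\bigr)}{h}$$
exactly as in the Lipschitz proof. Substituting the H\"older bounds $|g(x)-g(x_i)| \le L_\alpha(g)(x-x_i)^\alpha$ and $|g(x)-g(x_{i+1})| \le L_\alpha(g)(x_{i+1}-x)^\alpha$, and then normalizing by $t := (x-x_i)/h \in [0,1]$, the pointwise estimate reduces to
$$|g(x)-S_g(x)| \;\le\; L_\alpha(g)\, h^\alpha\, \psi(t), \qquad \psi(t) := (1-t)t^\alpha + t(1-t)^\alpha.$$

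The one new ingredient, compared with Lemma~\ref{key}, is controlling $\psi$ on $[0,1]$, and this is the only mildly non-trivial step in the proof. I observe that $\psi$ is continuous with $\psi(0)=\psi(1)=0$ and is symmetric under $t \leftrightarrow 1-t$, so $t = 1/2$ is automatically a critical point; a short computation then shows it is the global maximum on $[0,1]$, with value $\psi(1/2) = 2^{-\alpha}$. This recovers the factor $1/2$ at $\alpha=1$ and hence Lemma~\ref{key} as a special case. Substituting $h^\alpha = d^\alpha/(n+1)^\alpha$ into the previous display gives the claimed estimate, uniformly in $x$ and in the subinterval index $i$, so that $\|g - S_g\|_{\infty,[0,d]}$ is bounded by the stated quantity and consequently so is $\mathrm{dist}(g, S^0_1(\triangle))_\infty$.
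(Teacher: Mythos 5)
Your route is exactly the one the paper gestures at (it offers no proof of this lemma, only the remark that one should extend the analysis of Lemma~\ref{key}), and your computation is correct up to the final sentence: on each subinterval the interpolation error is bounded by $L_\alpha(g)\,h^\alpha\max_{[0,1]}\psi$ with $\max_{[0,1]}\psi=\psi(1/2)=2^{-\alpha}$. But what this yields is
$$
\|g-S_g\|_{\infty,[0,d]}\;\le\;L_\alpha(g)\Bigl(\tfrac{h}{2}\Bigr)^{\alpha}\;=\;\frac{L_\alpha(g)\,d^\alpha}{2^{\alpha}(n+1)^{\alpha}},
$$
whereas the lemma claims $\frac{L_\alpha(g)\,d^\alpha}{2(n+1)^{\alpha}}$. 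For $\alpha<1$ one has $2^{-\alpha}>1/2$, so your bound is strictly weaker than the stated one, and the closing assertion that substituting $h^\alpha=d^\alpha/(n+1)^\alpha$ ``gives the claimed estimate'' silently identifies $2^{-\alpha}$ with $1/2$. This gap cannot be closed by a sharper study of $\psi$: the function $g(x)=\min(x-x_i,\,x_{i+1}-x)^{\alpha}$ has H\"older constant at most $1$ on $[x_i,x_{i+1}]$, vanishes at both knots, and equals $(h/2)^\alpha$ at the midpoint, so the interpolant's error really is $2^{-\alpha}h^{\alpha}$ there. Hence the constant $1/2$ is unattainable by the interpolatory spline, and to prove the lemma with its stated constant you would need a different, non-interpolatory competitor from $S^0_1(\triangle)$ (or the denominator in the statement should read $2^{\alpha}$ rather than $2$). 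Since only the order $O((n+1)^{-\alpha})$ is used downstream in Theorem~\ref{mjlai2019n}, the discrepancy is harmless for the rest of the paper, but as a proof of the lemma as literally stated the last step does not go through.
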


Similarly, we can define a class of functions which is Kolmogorov-H\"older (KH) continuous in the sense
that $K$-outer function $g$ is H\"older continuity $\alpha\in (0, 1)$. For each univariate $g$ in 
$C^{0,\alpha}([0, d])$, we define $f$ using the KST formula (\ref{repres}). Then we have a new class of
continuous functions which will satisfy (\ref{subcurse_holder}). The proof is a straightforward generalization of the one for Theorem~\ref{mjlai2019}, we leave it to the interested readers.

\begin{theorem} 
\label{mjlai2019n}
For each continuous function $f\in C([0, 1]^d)$,  
let $g$ be the outer function associated with $f$.  Suppose that $g$ is in 
$C^{0,\alpha}([0, d])$ for some $\alpha\in (0, 1]$. Then 
\begin{equation}
\label{subcurse_holder}
\inf_{ s\in {\cal K}_{n, n}(\sigma_1) } \|f - s\|_{C([0,1]^d) } 
\le \frac{(2d+1)^2 L_{\alpha}(g) }{n^\alpha}.
\end{equation}
\end{theorem}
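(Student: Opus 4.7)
The plan is to follow the proof of Theorem~\ref{mjlai2019} step by step, replacing the Lipschitz ingredient by the H\"older analog at each occurrence. First, I invoke Lemma~\ref{key0} to approximate the monotone K-inner functions $\phi_q$ by linear splines $L_q$ with $n$ interior knots and $\|\phi_q - L_q\|_\infty \le 1/(n+1)$; this step is unchanged from the Lipschitz proof since the $\phi_q$ still have total variation at most one. Next, Lemma~\ref{nkey} applied to $g\in C^{0,\alpha}([0,d])$ with $nd$ interior knots yields a linear spline $S_g$ satisfying
\[
\|g-S_g\|_{\infty,[0,d]} \le \frac{L_\alpha(g)\,d^\alpha}{2(nd+1)^\alpha} \le \frac{L_\alpha(g)}{2 n^\alpha}.
\]

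I then decompose the error into the same ``outer'' and ``inner'' pieces as in the Lipschitz proof:
\[
\bigl|f(\bfx) - \sum_{q=0}^{2d} S_g(v_q)\bigr|
\le \sum_{q=0}^{2d}\bigl|g(u_q)-S_g(u_q)\bigr| + \sum_{q=0}^{2d}\bigl|S_g(u_q)-S_g(v_q)\bigr|,
\]
where $u_q := \sum_{i=1}^d \lambda_i \phi_q(x_i)$ and $v_q := \sum_{i=1}^d \lambda_i L_q(x_i)$ (both lying in $[0,d]$, after clipping $L_q$ into $[0,1]$ if necessary, which costs nothing in the ReLU network). The outer sum is immediately bounded by $(2d+1)\|g-S_g\|_\infty$.

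The main place where the Lipschitz argument must be adapted, and the step I expect to be the main obstacle, is controlling the inner sum. In the Lipschitz case this was handled by propagating Lipschitz continuity to the interpolant via estimate~(\ref{key2}). A clean H\"older analog of~(\ref{key2}) for $S_g$ is awkward: a direct telescoping through knots only yields a constant $3^{1-\alpha}$ via the power-mean inequality $a^\alpha+b^\alpha+c^\alpha \le 3^{1-\alpha}(a+b+c)^\alpha$, which is too weak to close the final bound $(2d+1)^2$ (it fails already at $d=1$ for small $\alpha$). To bypass this obstacle, I would route through $g$ itself:
\[
|S_g(u)-S_g(v)| \le 2\|g-S_g\|_\infty + |g(u)-g(v)| \le 2\|g-S_g\|_\infty + L_\alpha(g)\,|u-v|^\alpha,
\]
an elementary triangle-inequality estimate that avoids proving any H\"older continuity of $S_g$ at all. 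Combined with $|u_q-v_q| \le \sum_i \lambda_i |\phi_q(x_i) - L_q(x_i)| \le d/(n+1)$, each inner term is at most $2\|g-S_g\|_\infty + L_\alpha(g)(d/(n+1))^\alpha$.

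Summing the outer and inner contributions then yields
\[
\bigl|f(\bfx)-\textstyle\sum_q S_g(v_q)\bigr|
\le 3(2d+1)\|g-S_g\|_\infty + (2d+1)L_\alpha(g)\bigl(\tfrac{d}{n+1}\bigr)^\alpha
\le \frac{(2d+1)L_\alpha(g)}{n^\alpha}\Bigl[\tfrac{3}{2}+d^\alpha\Bigr],
\]
using $(n+1)^\alpha\ge n^\alpha$ and the bound on $\|g-S_g\|_\infty$ from Lemma~\ref{nkey}. The elementary inequality $\tfrac{3}{2}+d^\alpha \le 2d+1$, valid for all $d\ge 1$ and $\alpha\in(0,1]$ because $d^\alpha\le d$ and $\tfrac12\le d$, then produces the claimed estimate $(2d+1)^2 L_\alpha(g)/n^\alpha$. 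Membership of the approximant in $\mathcal{K}_{n,n}(\sigma_1)$ follows from the ReLU-to-linear-spline representation established in the proof of Theorem~\ref{mjlai2019}.
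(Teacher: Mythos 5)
Your proof is correct, and it follows the same skeleton as the paper's proof of Theorem~\ref{mjlai2019} (which is all the paper offers here: Theorem~\ref{mjlai2019n} is stated with the remark that the proof is a ``straightforward generalization'' left to the reader). The one place you genuinely depart from a literal generalization is the inner-composition step. A literal transcription would need a H\"older analogue of (\ref{key2}), i.e.\ a bound on the H\"older seminorm of the interpolant $S_g$; as you note, naive telescoping through the knots gives only $3^{1-\alpha}L_\alpha(g)$, which still closes the $(2d+1)^2$ bound for $d\ge 2$ (since $3^{1-\alpha}d^{\alpha}\le\max(3,d)\le 2d+\tfrac12$) but is uncomfortably tight and fails marginally at $d=1$ for small $\alpha$. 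Your substitute
$$|S_g(u)-S_g(v)|\le 2\|g-S_g\|_\infty+L_\alpha(g)|u-v|^\alpha$$
sidesteps any regularity claim about $S_g$ entirely, costs only an extra $2\|g-S_g\|_\infty$ per term (absorbed into the $O(n^{-\alpha})$ budget), and yields the clean final inequality $\tfrac32+d^\alpha\le 2d+1$ valid for all $d\ge1$ and $\alpha\in(0,1]$. The remaining ingredients --- Lemma~\ref{key0} for the inner functions with $V_0^1(\phi_q)\le 1$, Lemma~\ref{nkey} with $nd$ knots for the outer function, the $(2d+1)$-fold outer/inner error split, and the realization of the composed linear splines inside $\mathcal{K}_{n,n}(\sigma_1)$ --- all match the paper's template, so your argument is a valid and in fact slightly more robust completion of the proof the authors omitted.
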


Finally, in this section, we study the Kolmogorov-modulus of continuity.  
For any continuous function $f\in C([0,1]^d)$, let $g_f$ be the outer function of $f$ based on
the KST. Then we use $\omega(g_f,h)$ which is called the Kolmogorov-modulus of continuity of $f$ to measure the 
smoothness of $g_f$. 
Due to the uniform continuity of $g_f$, we have  linear spline 
$S_{g_f}$ over an equally-spaced knot sequence such that 
\begin{equation}
| g_f(t) -S_{g_f}(t)|\le \omega(g_f,h), \quad \forall t\in [0, d] 
\end{equation}  
for any $h>0$, e.g. $h=1/n$ for a positive integer $n$.  It follows that 
\begin{equation}
\label{newkey1}
| g_f(\sum_{i=1}^d \lambda_i \phi_q(x_i))- S_{g_f}(\sum_{i=1}^d \lambda_i \phi_q(x_i)|\le \omega(g_f,h),
\end{equation}
for any $(x_1,\cdots, x_d)\in [0,1]^d$.   
Since $\phi_q, q=0, \cdots, 2d$ are monotonically increasing, we use Lemma~\ref{key0} to have linear splines $L_q$ such that $|L_q(t)- \phi_q(t)|\le h$ since $V_0^1(\phi_q)\le 1$.  We now estimate
\begin{equation}
\label{newkey2}
 | S_{g_f}(\sum_{i=1}^d \lambda_i \phi_q(x_i))- S_{g_f}(\sum_{i=1}^d \lambda_i L_q(x_i))|
\end{equation}
for $q=0, \cdots, 2d$. Note that 
$$
 | \sum_{i=1}^d \lambda_i \phi_q(x_i) - \sum_{i=1}^d \lambda_i L_q(x_i)| \le \sum_{i=1}^d |\phi_q(x_i)- L_q(x_i)| \le dh.
$$
The difference of the above two points in $[0, d]$ is separated by at most $d$ subintervals with length $h$ and hence,  we will have   
\begin{equation}
\label{newkey3}
| S_{g_f}(\sum_{i=1}^d \lambda_i \phi_q(x_i))- S_{g_f}(\sum_{i=1}^d \lambda_i L_q(x_i))| 
\le  2d\cdot\omega(g_f,h)  
\end{equation}
since $S_{g_f}$ is a linear interpolatory spline of $g_f$.  It follows that 
\begin{eqnarray*}
&&|f(x_1, \cdots, x_n) - \sum_{q=0}^{2d} S_{g_f}(\sum_{i=1}^d \lambda_i L_q(x_i))| \cr
&\le & \sum_{q=0}^{2d} | g_f(\sum_{i=1}^d \lambda_i \phi_q(x_i))- S_{g_f}(\sum_{i=1}^d \lambda_i \phi_q(x_i))| + 
\sum_{q=0}^{2d}|S_{g_f}(\sum_{i=1}^d \lambda_i \phi_q(x_i))- S_{g_f}(\sum_{i=1}^d \lambda_i L_q(x_i))|\cr
&\le & (2d+1)\omega(g_f,h)+ (2d+1)2d\cdot\omega(g_f,h).
\end{eqnarray*}
Therefore, we conclude the following theorem.
\begin{theorem} 
	\label{mjlai2022}
	For any continuous function $f\in C[0,1]^d$, let $g_f$ be the outer function associated with $f$.  Then 
	\begin{equation}
	\label{subcurse2}
	\inf_{ s\in \mathcal{K}_{n, n}(\sigma_1) } 
	\|f - s\|_{C([0,1]^d) } \le (2d+1)^2\omega(g_f,1/n).
	\end{equation}  
\end{theorem}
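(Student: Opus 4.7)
The plan is to mimic the proof strategy of Theorem~\ref{mjlai2019} but replace every appeal to the Lipschitz constant $C_g$ by the K-modulus $\omega(g_f,h)$ with $h=1/n$. The KST gives a fixed representation $f(x_1,\dots,x_d)=\sum_{q=0}^{2d} g_f\bigl(\sum_{i=1}^d \lambda_i \phi_q(x_i)\bigr)$, so I would first construct a linear spline $S_{g_f}$ on an equally spaced partition of $[0,d]$ with $dn$ subintervals (mesh size $h=1/n$) which uniformly approximates $g_f$ with error at most $\omega(g_f,h)$ by the classical estimate in Lemma~\ref{Powell}. Next, for each K-inner function $\phi_q$, I would use Lemma~\ref{key0} — valid because $\phi_q$ is monotone on $[0,1]$ hence of bounded variation with $V_0^1(\phi_q)\le 1$ — to obtain linear splines $L_q$ with $\|L_q-\phi_q\|_\infty \le h = 1/n$. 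The candidate approximant is then
\begin{equation*}
s(\bfx) \;=\; \sum_{q=0}^{2d} S_{g_f}\Bigl(\sum_{i=1}^d \lambda_i L_q(x_i)\Bigr),
\end{equation*}
and I would verify it lies in $\mathcal{K}_{n,n}(\sigma_1)$ by expanding each linear spline as a ReLU combination via identity~(\ref{mjlai}), exactly as in the Lipschitz case.

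With $s$ fixed, I would carry out a triangle-inequality split of $|f(\bfx)-s(\bfx)|$ into a summation over $q$ of two pieces: an ``outer'' piece $|g_f(\sum_i \lambda_i\phi_q(x_i))-S_{g_f}(\sum_i \lambda_i\phi_q(x_i))|$, bounded directly by $\omega(g_f,h)$, and an ``inner'' piece $|S_{g_f}(\sum_i \lambda_i\phi_q(x_i))-S_{g_f}(\sum_i \lambda_i L_q(x_i))|$. Summed over $q=0,\dots,2d$ the outer contribution contributes $(2d+1)\omega(g_f,h)$. The inner piece is the main obstacle, since we must control the modulus of $S_{g_f}$ rather than of $g_f$ itself.

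To handle it I would note $|\sum_i \lambda_i\phi_q(x_i)-\sum_i \lambda_i L_q(x_i)|\le \sum_i |\phi_q(x_i)-L_q(x_i)|\le dh$, so the two evaluation points of $S_{g_f}$ lie within distance $dh$, i.e.\ they are separated by at most $d$ mesh intervals of the equally spaced partition. Because $S_{g_f}$ is the linear interpolant of $g_f$ at those mesh points, its oscillation across any single mesh subinterval is bounded by $\omega(g_f,h)$, so crossing at most $d$ intervals gives the cleanest bound $|S_{g_f}(u)-S_{g_f}(v)|\le 2d\,\omega(g_f,h)$ used in~(\ref{newkey3}) — this is the step I expect a careful reader to want written out, since it rests on the specific structure (linear interpolation on a uniform grid) rather than a general Lipschitz statement. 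Summing this over $q$ gives $(2d+1)(2d)\,\omega(g_f,h)$. Adding the two contributions produces $[(2d+1)+(2d+1)(2d)]\omega(g_f,h)=(2d+1)^2\omega(g_f,1/n)$, which is the desired bound, and taking the infimum over $\mathcal{K}_{n,n}(\sigma_1)$ concludes the proof.
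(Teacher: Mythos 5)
Your proposal is correct and follows essentially the same route as the paper's own argument: the same spline approximants $S_{g_f}$ and $L_q$ (via Lemma~\ref{Powell} and Lemma~\ref{key0}), the same triangle-inequality split into an outer term contributing $(2d+1)\omega(g_f,1/n)$ and an inner term contributing $(2d+1)\cdot 2d\,\omega(g_f,1/n)$, and the same $2d\,\omega(g_f,h)$ oscillation bound for $S_{g_f}$ across at most $d$ mesh intervals as in (\ref{newkey3}). Your added justification of that oscillation bound from the linear-interpolation structure is a welcome elaboration of a step the paper states without detail.
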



\section{KB-splines and LKB-splines}
\label{sec3}
However, it is not easy to see if the outer function $g_f$ is Lipschitz continuous  when given a continuous functions $f$. To do so we have
to compute $g_f$ from $f$ first.  To this end, we implemented Lorentz's constructive proof of KST in MATLAB by following the steps in pages $168 - 174$ in \cite{L66}. See \cite{B08} for another implementation based on Maple and MATLAB. 
We noticed that the curve $g_f$ behaviors very badly for many smooth functions $f$.  
Even if $f$ is a linear polynomial in the 2-dimensional space, the outer function $g$ still behaviors very widey 
although we can use K-network with two hidden layers to approximate this linear polynomial $f$ arbitrarily well  in theory.  This may be a big hurdle to prevent researchers in \cite{Girosi1989}, \cite{H87}, 
\cite{K92}, \cite{IP03}, \cite{B08}, and etc. from successful applications based on Kolmogorov spline network.    
We circumvent the difficulty of having such a wildly behaved outer function $g$ by introducing KB-splines and the denoised counterpart LKB-splines in this section. 
In addition, we will explain how to use them to well approximate high dimensional functions in a later
section..  

First of all, we note that the implementation of these $\phi_q, q=0, \cdots, 2d$ is not easy. 
Numerical $\phi_q$'s are not accurate enough. Indeed, 
letting $z_q(x_1,\cdots, x_d)= \sum_{i=1}^d \lambda_i \phi_q(x_i), $
Consider the transform: 
\begin{equation}
\label{peano}
T(x_1,\cdots, x_d)=(z_0, z_1, \cdots, z_{2d}) 
\end{equation} 
which maps from $[0, 1]^d$ to  $\mathbb{R}^{2d+1}$. Let $Z=\{T(x_1,\cdots, x_d), (x_1,\cdots, 
x_d)\in [0, 1]^d\}$ be the image of $T([0,1]^d)\subset \mathbb{R}^{2d+1}$. It is easy to see that
the image is closed. The theory in \cite{L66} explains that the map $T$ is one-to-one  
and continuous. As the dimension of
$Z$ is much larger than $d$, the map $T$ is like a well-known Peano curve which maps from $[0, 1]$
to $[0,1]^2$ and hence, the implementation of $T$, i.e., the implementation of $\phi_q$'s 
is not possible to be accurate.   
However, we are able to compute these $\phi_q$ and decompose $g$ such that the reconstruction 
of  constant function is exact. Let us present two  examples to show that 
our numerical implementation is reasonable. 
For convenience, let us use images as  2D functions and compute their outer functions $g$ 
and then reconstruct the images back.  
In Figure~\ref{im1}, we can see that the reconstruction
is very good visually although the outer functions $g$ are oscillating very much. It is worthwhile to note that such reconstruction results have also been reported in \cite{B08}.
Certainly, these images are not continuous functions and
hence we do not expect that $g$ to be Lipschitz continuous. But these reconstructed images serves as a ``proof" that our computational code 
works numerically.  

\begin{figure}[thpb]
    \centering
	\begin{tabular}{ccc}
		\small{Original Image} & \small{Reconstructed Image} & \small{Associated Function g} \\
		\includegraphics[width = 0.25\textwidth, height=0.25\textwidth]{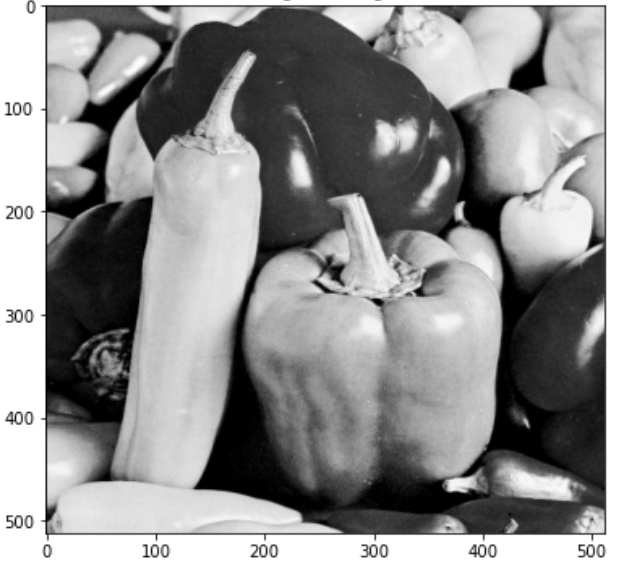} &  \includegraphics[width = 0.25\textwidth, height=0.25\textwidth]{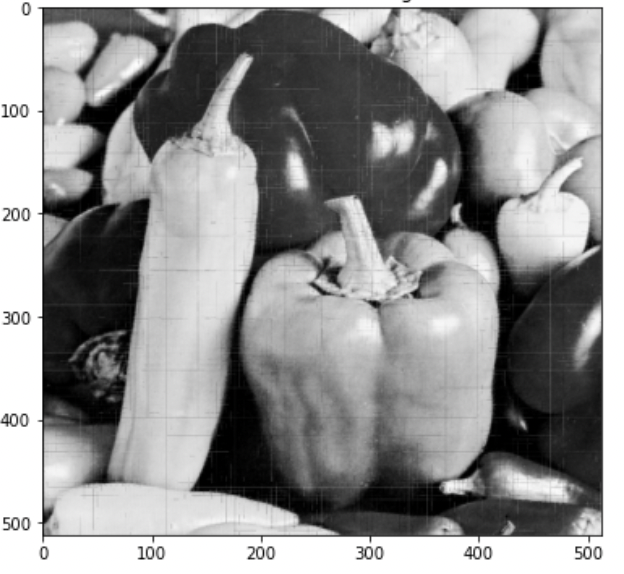} & \includegraphics[width = 0.25\textwidth, height=0.25\textwidth]{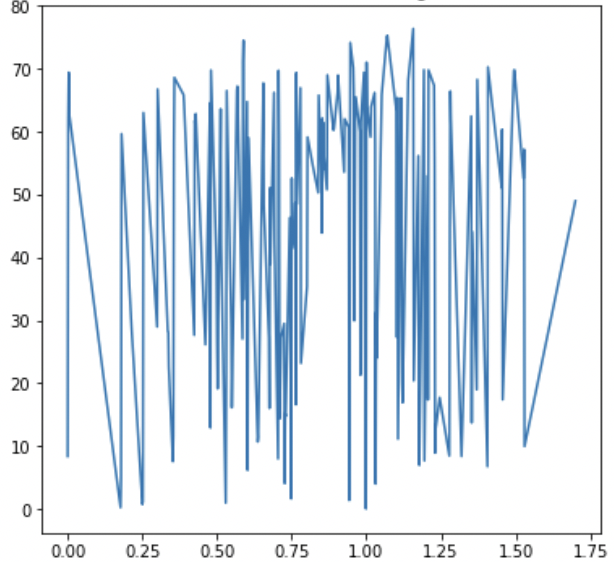} \\
  		\includegraphics[width = 0.25\textwidth, height=0.25\textwidth]{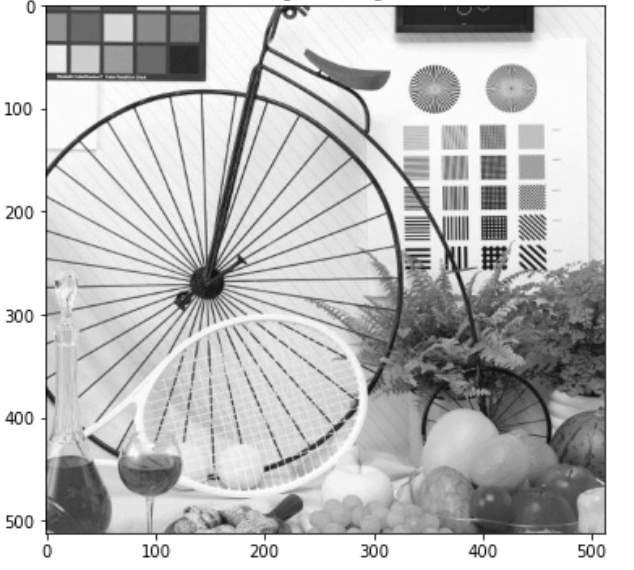} &  \includegraphics[width = 0.25\textwidth, height=0.25\textwidth]{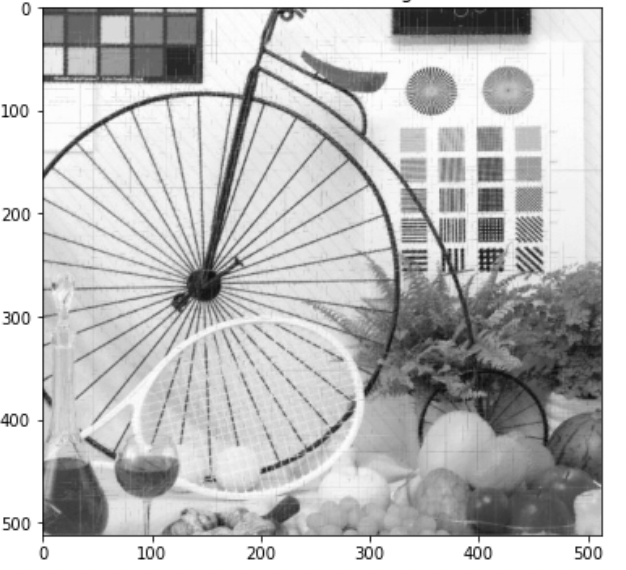} & \includegraphics[width = 0.25\textwidth, height=0.25\textwidth]{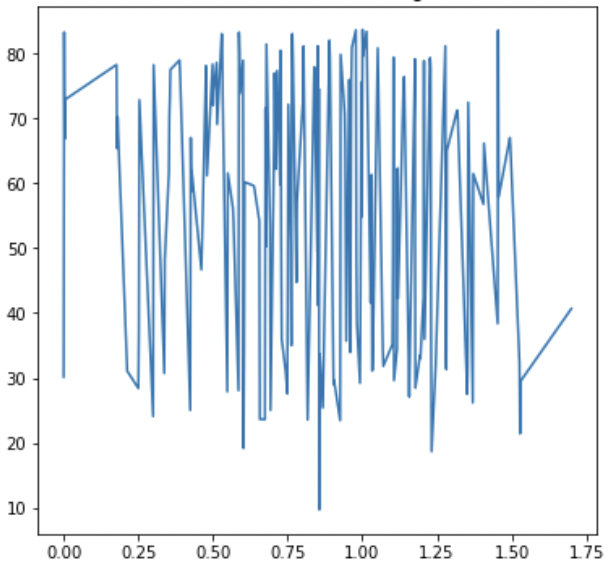} 
	\end{tabular}
	\caption{Original image (left column), reconstructed image (middle column), and associated outer function $g$ (right column)\label{im1}}
\end{figure}


Next we present a few examples of smooth functions whose outer functions may not be Lipschitz 
continuous in Figure~\ref{im3}. Note that the reconstructed functions are very noisy, in fact 
they are too noisy to believe that the implementation of the KST can be useful.  In order to
see that these noisy functions are indeed the original functions, we applied  
a penalized least squares method based on  bivariate spline method  
(to be explained later in the paper).  
That is, after denoising, the reconstructed functions are very close to the exact original functions
as shown in Figure~\ref{im3}. That is, the denoising method is successful which motivates us to 
adopt this approach to approximate any continuous functions. 

\begin{figure}
\centering
\begin{tabular}{cc} 
    \includegraphics[width = 0.48\textwidth, height=0.25\textwidth]{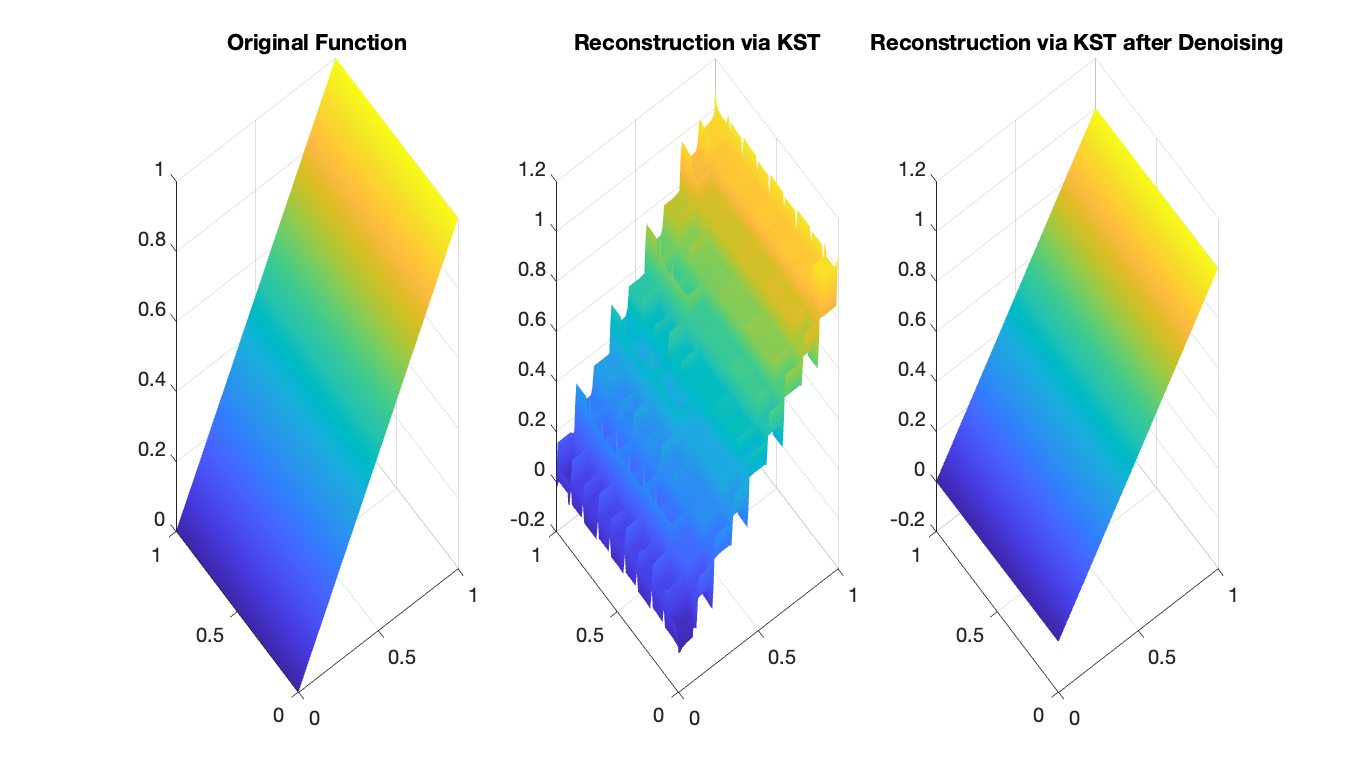} &
    \includegraphics[width = 0.48\textwidth, height=0.25\textwidth]{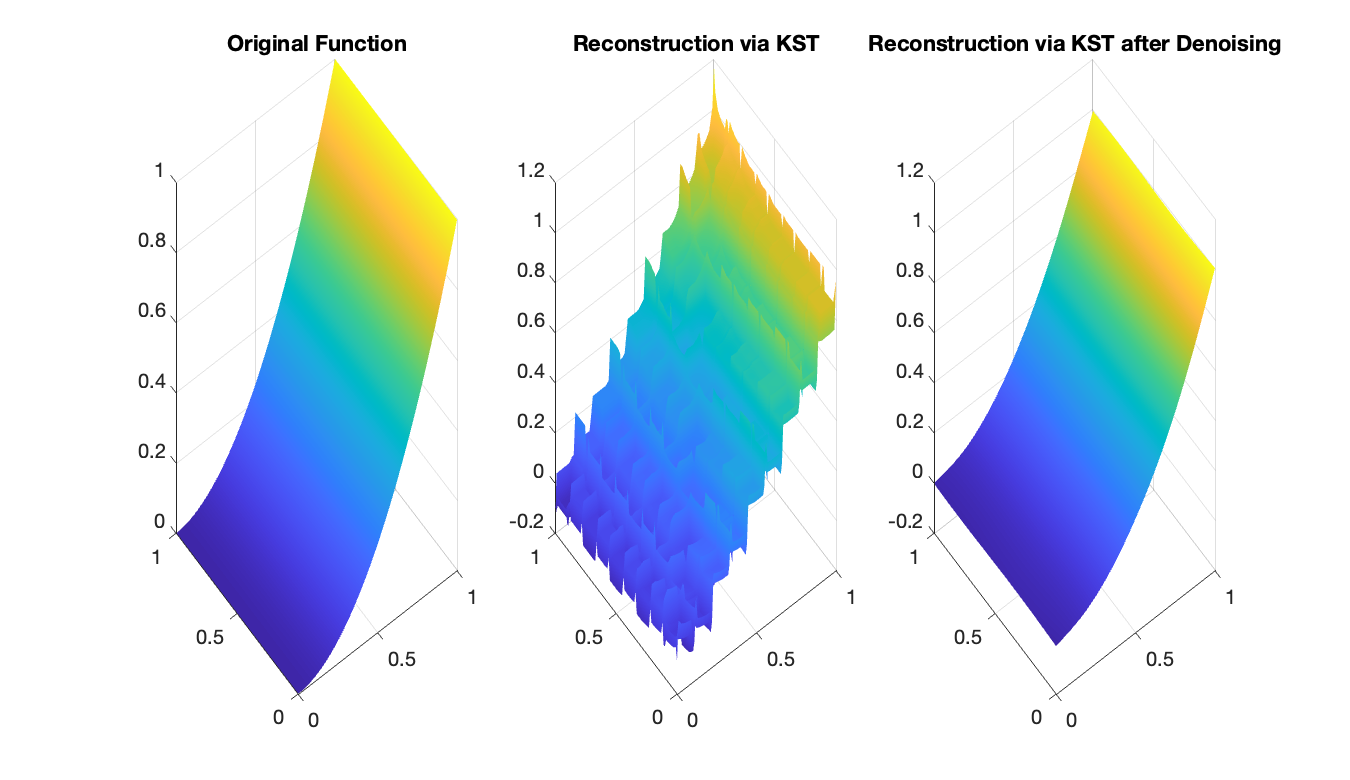} 
    \\
    \includegraphics[width = 0.48\textwidth, height=0.25\textwidth]{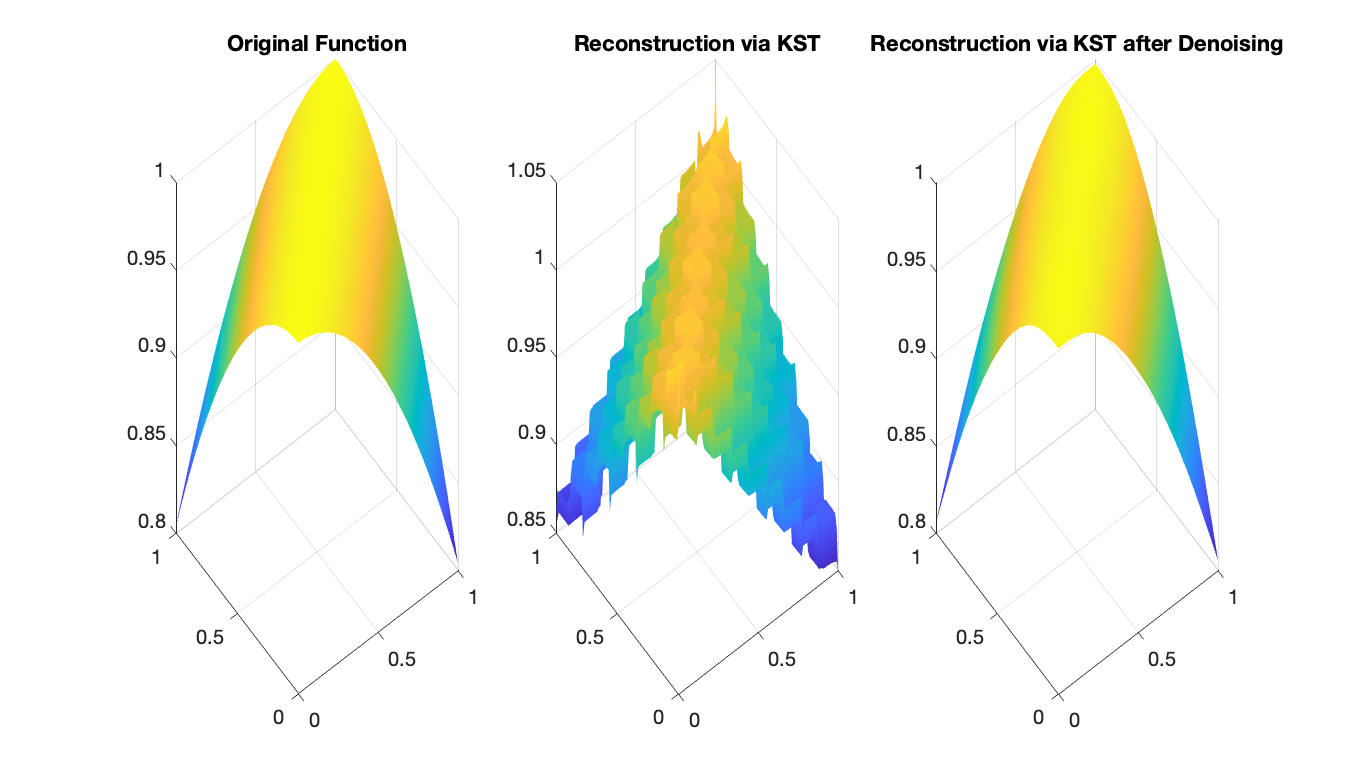} &
    \includegraphics[width = 0.48\textwidth, height=0.25\textwidth]{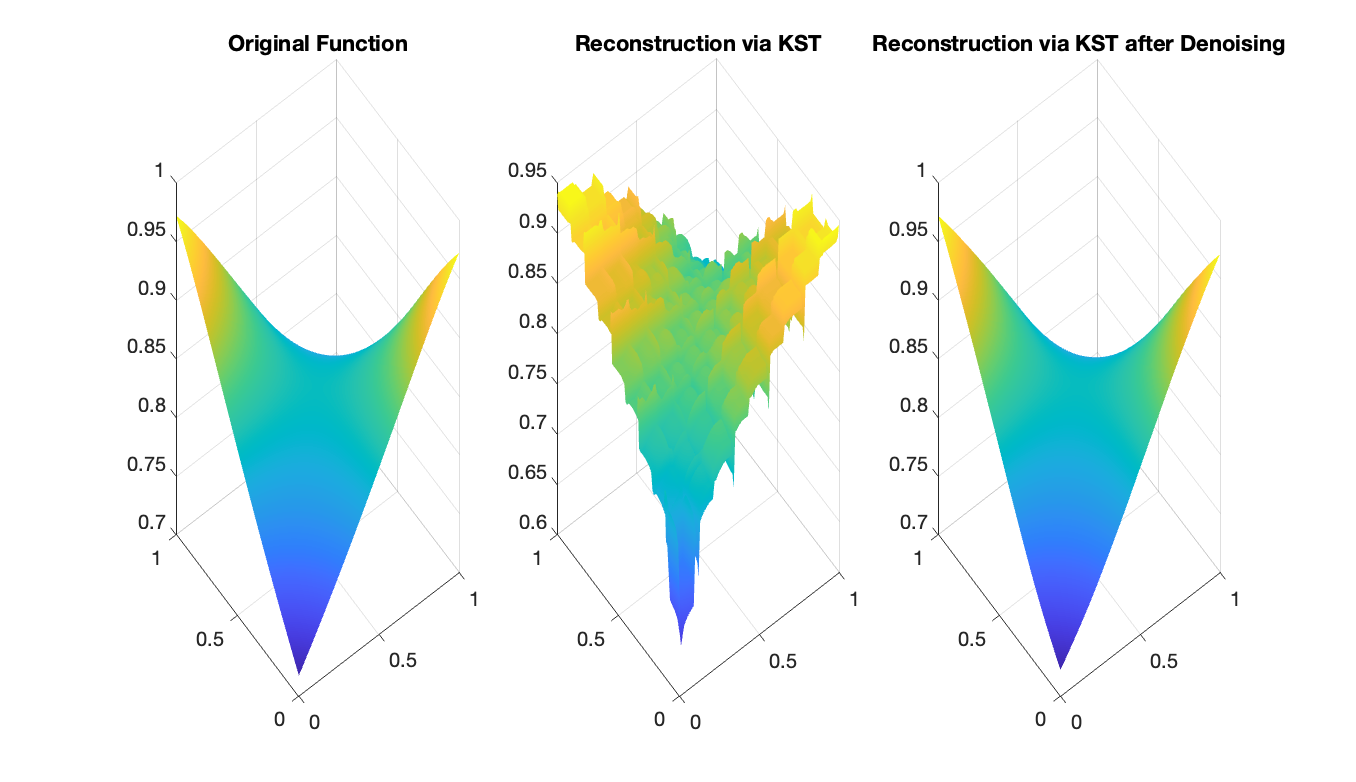}
\end{tabular}
\caption{Top left: reconstruction of $f(x,y)=x$. Top right: reconstruction of $f(x,y)=x^2$. Bottom left: reconstruction of $f(x,y)=\cos(2(x-y)/\pi)$.  Bottom right: reconstruction of $f(x,y)=\sin(1/(1+(x-0.5)(y-0.5)))$. \label{im3}}
\end{figure}

\subsection{KB-splines}
To this end, we first use standard uniform B-splines to form some subclasses of KL continuous functions. 
Let $\triangle_n=\{ 0= t_1<t_2<\cdots <t_{dn}<d\}$ 
be a uniform partition of interval $[0, d]$ 
and let $b_{n,i}(t)= B_k(t-t_i), i=1, \cdots, dn$ 
be the standard B-splines of degree $k$ with $k\ge 1$. 
For simplicity, we only explain our approach based on linear B-splines for the theoretical 
aspect while using other B-splines (e.g. cubic B-splines) for the numerical experiments. We define KB-splines by 
\begin{equation}
\label{KB}
KB_{n,j}(x_1,\cdots, x_d) = \sum_{q=0}^{2d} b_{n,j}\left(\sum_{i=1}^d\lambda_i\phi_q(x_i)\right), 
j=1, \cdots, dn.
\end{equation}
It is easy to see that each of these KB-splines defined above is nonnegative.  
Due to the property of B-splines: $\sum_{i=1}^{dn} b_{n,i}(t)=1$ for all $t\in [0, d]$, we have the following property of KB-splines:  
\begin{theorem}
\label{constantpreserving}
We have	$\sum_{i=1}^{dn} KB_{n,i}(x_1,\cdots, x_d)=1$ and hence, $0\le KB_{n,i} \le 1$.  
\end{theorem}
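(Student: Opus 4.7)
The plan is to reduce the identity for KB-splines to the standard partition-of-unity property of the underlying univariate B-splines, which the authors recall immediately before the statement: namely, $\sum_{j=1}^{dn} b_{n,j}(t)=1$ for every $t\in[0,d]$. Since $KB_{n,j}$ is defined as a sum over $q$ of $b_{n,j}$ evaluated at the inner arguments $z_q(\bfx):=\sum_{i=1}^d \lambda_i \phi_q(x_i)$, the whole argument is essentially an interchange of summation preceded by one input-range check.

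First I would verify that $z_q(\bfx)\in[0,d]$ for every $\bfx\in[0,1]^d$ and every $q\in\{0,1,\ldots,2d\}$. This is immediate from the KST setup recalled in Theorem~\ref{kolmogorov}: each $\phi_q$ is strictly increasing on $I=[0,1]$ with image contained in $[0,1]$, and each $\lambda_i\in(0,1]$, so
\begin{equation*}
0\le z_q(\bfx)\le \sum_{i=1}^d \lambda_i \cdot 1 \le d.
\end{equation*}
This step is what makes the univariate partition-of-unity identity legally applicable at the point $z_q(\bfx)$, independently of which $q$ or which $\bfx$ is chosen.

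Next, I would exchange the order of summation:
\begin{equation*}
\sum_{j=1}^{dn} KB_{n,j}(\bfx) = \sum_{j=1}^{dn}\sum_{q=0}^{2d} b_{n,j}(z_q(\bfx)) = \sum_{q=0}^{2d}\sum_{j=1}^{dn} b_{n,j}(z_q(\bfx)),
\end{equation*}
and apply the univariate identity $\sum_{j=1}^{dn} b_{n,j}(t)=1$ at each of the $2d+1$ input values $t=z_q(\bfx)$, which collapses the inner sum to a constant and yields the claimed total (up to the normalization convention the authors build into the definition of $KB_{n,j}$). Nonnegativity of each $KB_{n,j}$ is then immediate from the fact that standard B-splines of degree $k\ge 1$ are nonnegative on their supports, so $KB_{n,j}\ge 0$ is a sum of nonnegative terms. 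Finally, the upper bound $KB_{n,j}\le 1$ follows from the just-proved identity by discarding the remaining nonnegative summands in the partition.

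I do not expect any serious obstacle here: the argument is a textbook interchange of summation combined with the standard partition-of-unity property of the $b_{n,j}$. The only bookkeeping point requiring care is the verification that $z_q(\bfx)$ lies in $[0,d]$ so that each $b_{n,j}(z_q(\bfx))$ is being evaluated inside the domain on which the partition-of-unity identity holds; once that is in place, the rest is one exchange of finite sums.
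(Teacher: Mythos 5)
Your proposal is correct and follows essentially the same route as the paper, whose entire proof is the one-line appeal to the partition-of-unity identity $\sum_{j=1}^{dn}b_{n,j}(t)=1$ on $[0,d]$; your added checks (that $z_q(\bfx)\in[0,d]$ and the interchange of the two finite sums) simply make that one line explicit. Your parenthetical about normalization is also well taken: as literally defined in (\ref{KB}) the sum equals $2d+1$ rather than $1$, so the stated identity and the bound $KB_{n,i}\le 1$ require the implicit factor $1/(2d+1)$ that you correctly flag.
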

\begin{proof}
The proof is immediate by using the fact $\sum_{i=1}^{dn} b_{n,i}(t)=1$ for all $t\in [0, d]$.
\end{proof}
\begin{remark}
The property in Theorem~\ref{constantpreserving} is called the partition of unit which makes the computation stable.  
We note that a few of these $dn$ KB-splines will be zero since $0<\lambda_i\leq 1$ and $\min\{\lambda_i, i=1, \cdots, d\}<1$. The number of zero KB-splines is dependent on the choice of $\lambda_i$, $i=1,\cdots, d$.  
\end{remark}
Another important result is that these KB-splines are linearly independent.
\begin{theorem}
\label{independence}
The nonzero KB-splines $\{KB_{n,j}\neq 0, j=1, \cdots, dn\}$ are linearly independent.
\end{theorem}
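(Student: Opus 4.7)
The plan is to reduce the multivariate independence to a one-dimensional vanishing statement. Suppose $\sum_{j=1}^{dn} c_j\, KB_{n,j}(x)\equiv 0$ on $[0,1]^d$, and set $S(t):=\sum_{j=1}^{dn} c_j\, b_{n,j}(t)$, a univariate spline on $[0,d]$. Using the definition (\ref{KB}) and interchanging the two sums, the hypothesis becomes
\[
\sum_{q=0}^{2d} S(z_q(x_1,\dots,x_d)) \equiv 0,\qquad z_q(x_1,\dots,x_d):=\sum_{i=1}^d \lambda_i \phi_q(x_i).
\]
Let $E_q:=z_q([0,1]^d)$ and $E:=\bigcup_{q=0}^{2d} E_q\subseteq [0,d]$. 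The condition $KB_{n,j}\not\equiv 0$ is precisely the requirement that $\text{supp}(b_{n,j})\cap E\neq\emptyset$, so it suffices to show $S\equiv 0$ on $E$: then on any open subinterval of $E$ the B-splines $b_{n,j}$ are linearly independent in the usual sense, hence $c_j=0$ for every $j$ with $KB_{n,j}\not\equiv 0$.

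To force $S\equiv 0$ on $E$, I would fix $q_0\in\{0,\dots,2d\}$ and an interior point $t_0\in E_{q_0}$, pick $x^{(0)}\in[0,1]^d$ with $z_{q_0}(x^{(0)})=t_0$, and try to isolate $S(t_0)$ from the $(2d+1)$-term identity via a perturbation argument. Here I would exploit the Lorentz construction of the inner functions: because the $\lambda_i$ are irrational and the $\phi_q$ are strictly increasing with the separation properties used in \cite{L62,L66}, for generic $x$ the values $z_0(x),\dots,z_{2d}(x)$ fall into pairwise distinct knot subintervals of $\triangle_n$. Differencing the identity $\sum_q S(z_q(x))=0$ along curves in $[0,1]^d$ chosen so that the $z_q(x)$ with $q\neq q_0$ stay frozen inside fixed knot subintervals reduces the constraint to the statement that one polynomial piece of $S$ is constant; iterating over different choices of $q_0$ and different curves then forces every polynomial piece of $S$ on every knot interval meeting $E$ to vanish.

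The main obstacle is this isolation step, since no single coordinate $x_i$ affects only one $z_q$ — moving any $x_i$ perturbs all of $z_0,\dots,z_{2d}$ simultaneously. To get around this I would rely on a density argument: the image of $x\mapsto(z_0(x),\dots,z_{2d}(x))$ is a $d$-dimensional continuum in $\mathbb{R}^{2d+1}$, and the irrationality of the $\lambda_i$'s together with the monotonicity of the $\phi_q$'s guarantee that in any neighborhood of $x^{(0)}$ one realises enough combinatorially distinct assignments of $(z_0(x),\dots,z_{2d}(x))$ to knot cells of $\triangle_n$ to produce a linear system whose only solution is $S\equiv 0$ on each affected piece. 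I expect verifying this combinatorial/density claim — the one piece of the argument that genuinely uses the Lorentz construction, rather than generic features of B-splines — to be the bulk of the technical work.
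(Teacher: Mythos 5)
Your reduction is sound as far as it goes: setting $S(t)=\sum_j c_j b_{n,j}(t)$ and rewriting the hypothesis as $\sum_{q=0}^{2d}S(z_q(x))\equiv 0$, and observing that $S\equiv 0$ on $E=\bigcup_q z_q([0,1]^d)$ would finish the job by local linear independence of B-splines, is exactly the right frame. But the decisive step --- deducing $S\equiv 0$ on $E$ from the vanishing of the \emph{sum} of $2d+1$ values of $S$ --- is precisely what you leave as an unverified ``combinatorial/density claim.'' This is a genuine gap, not a routine verification: a priori $S$ could take cancelling positive and negative values across the different branches $z_0(x),\dots,z_{2d}(x)$, and your proposed mechanism for ruling this out (moving $x$ so that $z_{q_0}(x)$ varies while all other $z_q(x)$ stay ``frozen'' inside fixed knot cells) is exactly the thing you admit cannot be done coordinatewise, since every $x_i$ perturbs all $2d+1$ of the $z_q$ simultaneously. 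Asserting that the irrationality of the $\lambda_i$ and the monotonicity of the $\phi_q$ produce ``enough combinatorially distinct assignments'' is a statement of hope, not a proof; nothing in your write-up shows the resulting linear system is nonsingular.

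For contrast, the paper avoids the isolation problem entirely by localizing in $x$ rather than in $t$: it restricts to a short segment $\{(x_1,0):x_1\in[0,\delta]\}$ (in the $d=2$ case) chosen from the graphs of the $\phi_q$ so that \emph{all} the values $z_q(x_1,0)$ lie in the first knot cell, where only the two consecutive B-splines $b_1,b_2$ are active. The partition of unity $b_1+b_2=1$ then collapses the identity to $(c_1-c_2)\sum_q b_1(z_q)+5c_2=0$, and the non-constancy of $\sum_q b_1(z_q)$ along the segment forces $c_1=c_2$ and hence both vanish; the argument is then repeated for other consecutive pairs $(c_k,c_{k+1})$. Note that it never needs the $z_q$ to land in \emph{distinct} cells --- it arranges for them to land in the \emph{same} cell, which is the opposite of, and much easier than, the separation you are trying to engineer. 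If you want to salvage your route, you would need to either prove your density claim in detail or switch to this kind of ``all branches in one cell'' localization.
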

\begin{proof}
Suppose there are $c_j$, $j=1,2,\cdots,dn$ such that $\sum_{j=1}^{dn}c_j KB_{n,j}(x_1,\cdots,x_d)=0$ for 
all $(x_1, \cdots, x_d)\in [0, 1]^d$. Then 
we want to show $c_j=0$ for all $j=1,2,\cdots, dn$.
Let us focus on the case $d=2$ as the proof for general case $d$ is similar.  
Suppose $n>0$ is a fixed integer and we use the notation $z_q=\sum_{i=1}^2\lambda_i\phi_q(x_i)$ as above. 
Then based on the graphs of $\phi_q$ in Figure~\ref{phis}, we can choose $x_1=\delta$ and $x_2=0$ with $0<\delta\leq 1$ 
small enough such that $KB_{n,j}(x_1,0)=\sum_{q=0}^{4} b_j(z_q(\delta,0))=0$ for all 
$j=3,4,\cdots, 2n$. Therefore in order to show the linear independence of $ KB_{n,j}, j=1,2,\cdots, 2n$, it is suffices to show $\sum_{j=1}^{2} c_j KB_{n,j}(x_1,x_2)=0$ implies $c_1=c_2=0$. 
Let us confine $x_1\in [0,\delta]$ and $x_2=0$. Then we have 
\begin{align*}
0 &= c_1 KB_{n,1}(x_1,x_2) + c_2 KB_{n,2}(x_1,x_2) = c_1(\sum_{q=0}^{4}b_1(z_q))+c_2(\sum_{q=0}^{4}b_2(z_q)) \\
&=c_1(\sum_{q=0}^{4}b_1(z_q))+c_2(5-\sum_{q=0}^{4}b_1(z_q)) =(c_1-c_2)(\sum_{q=0}^{4}b_1(z_q))+5c_2,
\end{align*}
where we have used the fact that $b_1(x)+b_2(x)=1$ over $[0,1/n]$.   
Since $c_2$ is constant, and $\sum_{q=0}^{4}b_1(z_q)$ is not constant when $x_1$ varies between $0$ and $\delta$, we must have $c_1=c_2$. Hence $c_2=0$ and therefore $c_1=0$. 

\begin{figure}[h]
    \centering
    \includegraphics[width=5in, height=2in ]{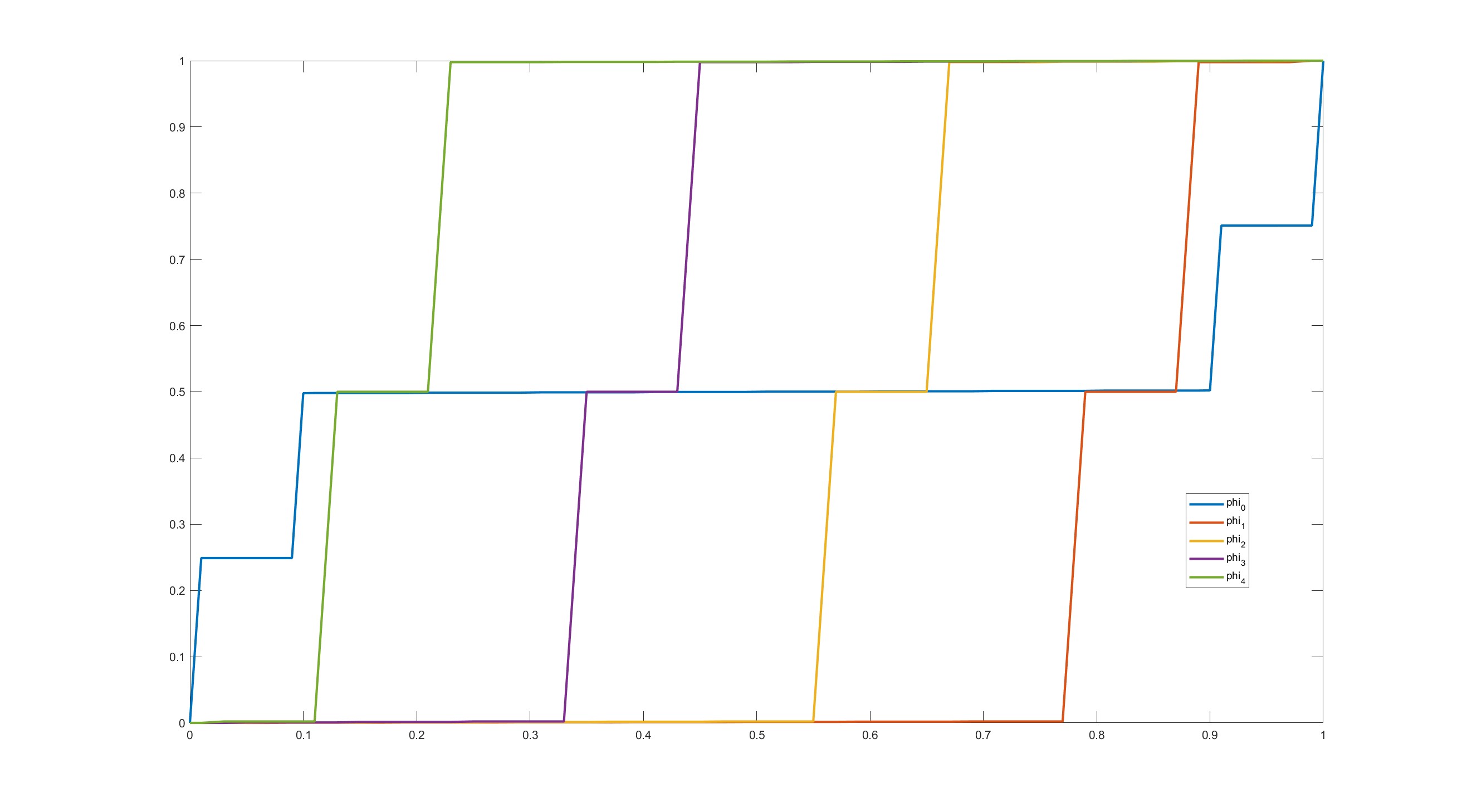}
    \caption{$\phi_q$, $q=0, 1, 2, 3, 4$, in the 2D setting. \label{phis}}
\end{figure}

In the same fashion, we can choose $\tilde{x}_1$ and $\tilde{\delta}$ such that 
$KB_{n,j}=\sum_{q=0}^{4} b_j(z_q(\tilde{x}_1,0))=0$ for all  $j=1,2, \cdots, 2n$ except for $j=k, k+1$. 
By the similar argument as above, we have $c_k=c_{k+1}=0$. 
By varying $k$ between $1$ and $2n$, we get $c_j=0$ for all $j=1, 2, \cdots, n$.
\end{proof}

Since $\hbox{span}\{b_{n,i}, i=1, \cdots, nd\}$ will be dense in $C[0, d]$ when $n\to \infty$,
we can conclude that $\hbox{span}\{KB_{n,j}, j=1,\cdots, nd\}$ will be dense in $C([0, 1]^d)$.  That is, we have 
\begin{theorem}
	\label{dense}
The KB-splines  $KB_{n,j}(x_1,\cdots, x_d), j=1, \cdots, nd$ are dense in $C([0, 1]^d)$ when $n\to \infty$ for a fixed dimension $d\ge 2$.    
\end{theorem}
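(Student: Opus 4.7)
The plan is to reduce the multivariate density claim to the well-known univariate density of B-splines in $C[0,d]$, using the representation furnished by the Kolmogorov superposition theorem as the bridge. Given any $f\in C([0,1]^d)$ and any $\epsilon>0$, I would first invoke Theorem~\ref{kolmogorov} to write
\[
f(x_1,\ldots,x_d)=\sum_{q=0}^{2d} g_f\!\left(\sum_{i=1}^d\lambda_i\phi_q(x_i)\right)
\]
for a continuous K-outer function $g_f$ on $[0,d]$. The key observation is that for any coefficients $c_1,\ldots,c_{nd}\in\mathbb{R}$, linearity of the summation in $q$ gives
\[
\sum_{j=1}^{nd} c_j\, KB_{n,j}(x_1,\ldots,x_d)=\sum_{q=0}^{2d} S\!\left(\sum_{i=1}^d\lambda_i\phi_q(x_i)\right),
\qquad S(t):=\sum_{j=1}^{nd} c_j\, b_{n,j}(t).
\]
So approximating $f$ by an element of $\mathrm{span}\{KB_{n,j}\}$ is equivalent to approximating $g_f$ on $[0,d]$ by an element of $\mathrm{span}\{b_{n,j}\}$, pulled back through the K-inner transform.

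Next I would invoke the standard univariate density statement: since the B-spline family $\{b_{n,j},\,j=1,\ldots,nd\}$ on a uniform mesh of width $d/(nd)=1/n$ forms a basis for the corresponding spline space, and since splines on a refining uniform partition are dense in $C[0,d]$ (for instance via Lemma~\ref{Powell} applied to $g_f$, with the modulus of continuity $\omega(g_f,1/n)\to 0$ as $n\to\infty$), there exists $n$ and coefficients $c_j$ such that $S$ satisfies
\[
\|g_f-S\|_{C[0,d]} \le \frac{\epsilon}{2d+1}.
\]
Combining with the KST representation and the triangle inequality, for every $(x_1,\ldots,x_d)\in[0,1]^d$,
\[
\Big|f(x_1,\ldots,x_d)-\sum_{j=1}^{nd} c_j\, KB_{n,j}(x_1,\ldots,x_d)\Big|
\le \sum_{q=0}^{2d} \Big|g_f\!\left(\sum_{i=1}^d\lambda_i\phi_q(x_i)\right)-S\!\left(\sum_{i=1}^d\lambda_i\phi_q(x_i)\right)\Big| \le \epsilon,
\]
establishing density.

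I do not expect a serious obstacle here; the argument is essentially a transfer principle that moves univariate spline approximation of $g_f$ up to multivariate approximation of $f$, with the constant $(2d+1)$ accounting for the outer sum over $q$. The only mildly delicate point worth stating explicitly is that the density of linear B-splines in $C[0,d]$ on a uniform mesh of width $1/n$ is a standard fact (the linear interpolant at the knots converges uniformly with rate $\omega(g_f,1/n)$), and that the range of each argument $\sum_i \lambda_i \phi_q(x_i)$ stays inside $[0,d]$ because $0<\lambda_i\le 1$ and $0\le\phi_q\le 1$, so the univariate approximation of $g_f$ on $[0,d]$ is exactly the right object to invoke.
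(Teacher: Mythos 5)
Your proposal is correct and follows essentially the same route as the paper: reduce to univariate density of the B-splines $\{b_{n,j}\}$ in $C[0,d]$, approximate the K-outer function $g_f$ to within $\epsilon/(2d+1)$, and transfer the bound through the KST representation via the triangle inequality over the $2d+1$ terms. Your added remarks (that the arguments $\sum_i\lambda_i\phi_q(x_i)$ stay in $[0,d]$, and the explicit appeal to Lemma~\ref{Powell} with $\omega(g_f,1/n)\to 0$) only make explicit what the paper leaves implicit.
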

\begin{proof}
For any continuous function $f\in C([0, 1]^d)$, let $g_f\in C[0, d])$ be the outer function
of $f$. For any $\epsilon>0$, there is an integer $n>0$ and a spline $S_{g_f}\in 
\hbox{span}\{b_{n,i}, i=1, \cdots, dn\}$ such that 
\begin{equation} \label{keyestimate}
\|g_f(t)- S_{g_f}(t)\|_{\infty} \le \epsilon/(2d+1)   
\end{equation}
for all $t$. Note that one can find $S_{g_f}$ by partitioning the interval $[\min(g_f), \max(g_f)]$ into subintervals with length $\epsilon/(2d+1)$ and
find the knots over $[0, d]$ and then add more knots in addition to modify the existing one to obtain a uniform knot sequence.  Then $S_{g_f}$ is a 
linear interpolatory spline of $g_f$ based on the uniform knot sequence.  

Writing $S_{g_f}(t) = \sum_{i=1}^{dn} c_i(f) b_{n,i}(t)$, we have 
\begin{eqnarray*}
\label{estimate1}
&&|f(x_1,\cdots, x_d) - \sum_{i=1}^{dn} c_i(f) KB_{n,i} (x_1,\cdots,x_d)|\cr
&=& |\sum_{q=0}^{2d} g(z_q(x_1,\cdots, x_d))- \sum_{i=1}^{dn} c_i(f) \sum_{q=0}^{2d}  b_{n,i}(z_q(x_1,\cdots,x_d))|\cr
&\le & \sum_{q=0}^{2d}|g(z_q(x_1,\cdots,x_d))-S_{g_f}(z_q(x_1,\cdots,x_d))| \le  (2d+1)\epsilon/(2d+1)=\epsilon.
\end{eqnarray*}
This completes the proof. 
\end{proof}

\begin{corollary}
\label{newmain3}
Suppose that $f\in C([0, 1]^d)$ is Kolmogorov-Lipschitz continuous, that is, the outer function $g_f\in C([0, d])$ is Lipschitz with Lipschitz constant $L$. Then there exists a KB spline
$S_n\in  \hbox{span}\{KB_{n,j}(x_1,\cdots, x_d), j=1, \cdots, nd\}$ such that 
\begin{equation}
|f(x_1,\cdots, x_d) - \sum_{i=1}^{dn} c_i(f) KB_{n,i} (x_1,\cdots,x_d)| \le (2+1/d)L/n.
\end{equation}
\end{corollary}
\begin{proof}
In the proof of Theorem~\ref{dense},  we used the key estimate (\ref{keyestimate}).  When $g_f$ is Lipschitz, we have (cf. Theorem 20.2 in \cite{P81})
\begin{equation}
\label{keyestimate2}
\|g_f(t)- S_{g_f}(t)\|_{\infty} \le \omega(g_f, 1/(nd))\le L/(nd).
\end{equation}
The rest of the proof is the same as the one in the proof of Theorem~\ref{dense}.  
\end{proof}

Note that the computation of $c_i(f)$'s is not easy as we do not know $g_f$.  We shall explain a computational method to approximate $f$ in the following sections.

\subsection{LKB-splines}
However, in practice, the KB-splines obtained in (\ref{KB}) are very noisy due to any implementation of 
$\phi_q$'s as we have explained before that the functions $z_q, q=0, \cdots, 2d$, like 
Peano's curve. One has no way to have an accurate implementation.  Aslso, as demonstrated before, 
our denoising method can help. We shall call LKB-splines after denoising KB-splines.  

Let us explain a multivariate spline method for denoising for $d=2$ and $d=3$. In general, we can use tensor product B-splines for denoising
for any $d\ge 2$ which is the similar to what we are going to explain below. 
For convenience, let us consider $d=2$ and let $\triangle$ be a 
triangulation of $[0, 1]^2$ based on a uniform refinement of two triangles by adding a diagonal to $[0, 1]^2$.  
For any degree $D\ge 1$ and smoothness $r\ge 1$ with $r<D$, let
\begin{equation}
\label{splinespace}
S^r_D(\triangle)= \{ s\in C^r([0, 1]^2):  s|_T\in \mathbb{P}_D, T\in \triangle\}
\end{equation}
be the spline space of degree $D$ and smoothness $r$ with $D>r$.   We refer to \cite{LS07} for a theoretical detail and \cite{ALW06}, \cite{S15} 
for a computational detail. 
For a given data set $\{(x_i,y_i, z_i), i=1, \cdots, N\}$ with 
$(x_i,y_i)\in [0, 1]^2$ and $z_i= f(x_i,y_i)+ \epsilon_i, i=1, \cdots, N$  
with noises $\epsilon_i$ which may not be 
very small, the penalized least squares method (cf. \cite{L08} and \cite{LS09}) is to find 
\begin{equation}
\label{PLS}
\min_{s\in S^1_5(\triangle)} \sum_{i=1, \cdots, N} |s(x_i,y_i)- z_i|^2  
+ \lambda {\cal E}_2(s) 
\end{equation}
with $\lambda\approx 1$, where ${\cal E}_2(s)$ is the thin-plate energy functional 
defined as follows.
\begin{equation}
\label{E2}
{\cal E}_2(s) =\int_\Omega |\frac{\partial^2}{\partial x^2} s|^2 + 2
|\frac{\partial^2}{\partial x \partial y} s|^2 + |\frac{\partial^2}{\partial y^2} s|^2. 
\end{equation}  

Bivariate splines have been studied for several decades and they have been used for data fitting  (cf. \cite{L08}, \cite{LS09}, and 
\cite{LW13}, \cite{S15}, and \cite{WWL20}), 
numerical solution of partial differential equations (see, e.g. \cite{LL22}), \cite{S15}, 
and data denoising (see, e.g. \cite{LW13}).  In our computation, the triangulation $\triangle$ is 
the one obtained from uniformly refined the initial triangulation $\Delta_0$ three times, where
$\Delta_0$ is  obtained by dividing $[0, 1]^2$ into two triangles using its diagonal line.  Let us write 
$S^1_5(\triangle)=\hbox{span}\{\phi_1, \cdots, \phi_M\}$. For each $k=1, \cdots, dn$, we write 
$LKB_k= \sum_{j=1}^M c_{k,j}\phi_j$ with coefficients $c_{k,j}$'s being the solution of the linear system:
\begin{equation}
\label{keymatrixequation}
([ \sum_{i=1}^N \phi_\ell (x_i,y_i)\phi_j(x_i,y_k)]_{\ell,j=1,\cdots, M}+ \lambda E(\phi_\ell,\phi_j) ) [c_{k.j}] = [ \sum_{i=1}^N KB_k(x_i,y_i)\phi_\ell (x_i,y_i)]
\end{equation}
for $k=1, \cdots, nd$, where $E(\phi_\ell, \phi_j)$ is the matrix associated with energy functional ${\cal E}_2$.  For convenience, we 
call the matrix on the left-hand side by $A$ with $\lambda = 1$ fixed.

We now explain that the penalized least squares method can produce a good smooth approximation
of the given data.  
For convenience, let $S_{f,\epsilon}$ be the minimizer of (\ref{PLS})  and write  
$\|f\|_{\cal P}= \sqrt{\frac{1}{N}\sum_{i=1}^N |f(x_i,y_i)|^2}$ is the rooted mean squares (RMS) which is a semi-norm which is used to measure
the computational error.  
If $f\in C^2([0,1]^2)$, we have the following 
\begin{theorem}
	\label{main1}
	Suppose that $f$ is twice differentiable over $[0, 1]^2$.  Let $S_{f,\epsilon}$ be the minimizer of 
	(\ref{PLS}).  Then we have
	\begin{equation}
	\label{mainresult1}\|f- S_{f,\epsilon}\|_{\cal P}\le C\|f\|_{2,\infty}|\triangle|^2 + 2
	\|\epsilon\|_{\cal P} + \sqrt{\frac{\lambda}{N}} \sqrt{{\cal E}_2(f)}
	\end{equation}
	for a positive constant $C$ independent of $f$, degree $d$, and triangulation $\triangle$.  
\end{theorem}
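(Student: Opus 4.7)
The plan is to compare the penalized least-squares minimizer $S_{f,\epsilon}$ against a fixed spline approximant to $f$ and exploit the optimality of $S_{f,\epsilon}$ in the functional (\ref{PLS}). Specifically, I would invoke the quasi-interpolation theory for the space $S^1_5(\triangle)$ from \cite{LS07,ALW06} to produce a spline $Q_f \in S^1_5(\triangle)$ satisfying two bounds simultaneously: (i) the Jackson-type estimate $\|f - Q_f\|_\infty \le C\|f\|_{2,\infty}|\triangle|^2$, which reflects that only the $C^2$-regularity of $f$ is being used; and (ii) the $H^2$-semi-norm stability ${\cal E}_2(Q_f) \le C'\,{\cal E}_2(f)$. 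With $Q_f$ in hand as a feasible competitor, I would plug it into the minimizer property of $S_{f,\epsilon}$:
$$\sum_{i=1}^N |S_{f,\epsilon}(x_i,y_i) - z_i|^2 + \lambda\, {\cal E}_2(S_{f,\epsilon}) \;\le\; \sum_{i=1}^N |Q_f(x_i,y_i) - z_i|^2 + \lambda\, {\cal E}_2(Q_f),$$
discard the nonnegative term $\lambda\,{\cal E}_2(S_{f,\epsilon})$ on the left, and rewrite the empirical sums in terms of the RMS norm $\|\cdot\|_{\cal P}$ to obtain $\|S_{f,\epsilon} - z\|_{\cal P}^2 \le \|Q_f - z\|_{\cal P}^2 + (\lambda/N){\cal E}_2(Q_f)$.

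Next I would take square roots and apply $\sqrt{a^2+b^2}\le a + b$ for $a,b\ge 0$, giving
$$\|S_{f,\epsilon} - z\|_{\cal P} \;\le\; \|Q_f - z\|_{\cal P} + \sqrt{(\lambda/N)\,{\cal E}_2(Q_f)}.$$
Using $z_i = f(x_i,y_i) + \epsilon_i$, two applications of the triangle inequality convert this into a bound on $\|f - S_{f,\epsilon}\|_{\cal P}$: first $\|f - S_{f,\epsilon}\|_{\cal P} \le \|\epsilon\|_{\cal P} + \|z - S_{f,\epsilon}\|_{\cal P}$, and then $\|Q_f - z\|_{\cal P} \le \|Q_f - f\|_{\cal P} + \|\epsilon\|_{\cal P} \le \|Q_f - f\|_\infty + \|\epsilon\|_{\cal P}$. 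Substituting the quasi-interpolation bounds (i) and (ii) and absorbing the $1/\sqrt{N}$ and the stability constant $\sqrt{C'}$ into the constant in front of $\sqrt{\lambda\,{\cal E}_2(f)}$ yields the stated inequality.

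The main obstacle is establishing the energy-stability bound (ii), that is, ${\cal E}_2(Q_f) \lesssim {\cal E}_2(f)$, for the chosen quasi-interpolant on $S^1_5(\triangle)$. The $L_\infty$ approximation bound (i) is classical Bramble--Hilbert-type spline theory, but controlling the thin-plate energy of a quasi-interpolant of a merely $C^2$ function requires Bernstein--B\'ezier coefficient estimates on each triangle together with Markov-type inverse inequalities, so as to translate local pointwise control into integrated second-derivative control without inflating constants by negative powers of $|\triangle|$. This is the step that draws most heavily on the multivariate spline machinery of \cite{LS07}, and it is where the degree $D=5$ with smoothness $r=1$ is essential: degree five provides enough freedom to construct a locally-supported quasi-interpolant that is simultaneously $H^2$-stable and reproduces polynomials of sufficient degree.
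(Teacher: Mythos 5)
Your argument has the same skeleton as the paper's: pick a fixed competitor spline in $S^1_5(\triangle)$, invoke the optimality of $S_{f,\epsilon}$ in (\ref{PLS}), use $\sqrt{a^2+b^2}\le a+b$, and unwind with triangle inequalities using $z=f+\epsilon$. The difference is the choice of competitor. The paper takes the \emph{minimal energy interpolatory spline} $S_f$ of (\ref{minenergy}) rather than a quasi-interpolant, and this choice dissolves what you correctly identify as your main obstacle: the energy bound ${\cal E}_2(S_f)\le{\cal E}_2(f)$ is precisely the defining extremal property of that spline as recorded in \cite{VLS02}, and it holds with constant exactly $1$, while the approximation bound $\|f-S_f\|_\infty\le C\|f\|_{2,\infty}|\triangle|^2$ is the von Golitschek--Lai--Schumaker theorem quoted in the paper. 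Your route instead requires an $H^2$-stability estimate ${\cal E}_2(Q_f)\le C'{\cal E}_2(f)$ for a quasi-interpolant, which is the genuinely hard step and which you leave unproven; it is attainable via stable local quasi-interpolants in \cite{LS07}, but only with some $C'>1$, so you would end up with $\sqrt{C'\lambda\,{\cal E}_2(f)}$ rather than the bare $\sqrt{\lambda\,{\cal E}_2(f)}$ in (\ref{mainresult1}) --- a constant you cannot ``absorb,'' since that term carries no constant in the statement. (A small point in your favor: you are more careful than the paper about the $1/N$ normalization between the sum in (\ref{PLS}) and the RMS norm $\|\cdot\|_{\cal P}$, which the paper's displayed chain silently elides.) So: structurally sound and essentially the intended argument, but the paper's choice of competitor is what makes the proof close without further work, and you should either switch to the minimal energy interpolant or accept a constant in front of the energy term.
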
 

To prove the above result, let us recall the following minimal energy spline 
$S_f\in S^1_5(\triangle)$ of data
function $f$: letting $\triangle$ be a triangulation of $[0, 1]^2$ with vertices $(x_i,y_i),
i=1, \cdots, N$, $S_f$ is the solution of the following minimization:
\begin{equation}
\label{minenergy}
\min_{S_f\in S^1_5(\triangle)} {\cal E}_2(S_f): \quad S_f(x_i,y_i)= f(x_i,y_i), i=1, 
\cdots, N.
\end{equation}
Then it is known that $S_f$ approximates $f$ very well if $f\in C^2([0, 1]^2)$.  We have
\begin{theorem}[von Golitschek, Lai and Schumaker, 2002(\cite{VLS02})] 
\label{VLS}
	Suppose that $f\in C^2([0, 1]^2)$. Then for the minimal energy spline $S_f^*$ which is the
	solution of (\ref{minenergy}), 
	\begin{equation}
	\label{LSV}
	\| S_f^*- f\|_{\infty} \le C \|f\|_{2, \infty} |\triangle|^2
	\end{equation}
	for a positive constant $C$ independent of $f$ and $\triangle$, 
	where $\|f\|_{2,\infty}$ denotes the maximum norm of the second order derivatives of $f$ 
	over $[0, 1]^2$ and $\|S_f^*- f\|_{\infty}$ is the maximum norm of $S^*_f-f$ over $[0, 1]^2$.   
\end{theorem}

\begin{proof}[ of Theorem~\ref{main1}]
Recall that $S_{f,\epsilon}$ is the minimizer of (\ref{PLS}). 	We now use $S_f$ to have 
\begin{eqnarray*}
\|f- S_{f,\epsilon}\|_{\cal P} &\le & \|z- S_{f,\epsilon}\|_{\cal P} + \|\epsilon\|_{\cal P} 
\le   \sqrt{\|z- S_{f,\epsilon}\|^2_{\cal P} +\frac{\lambda}{N} {\cal E}_2(S_{f,\epsilon})} + 
\|\epsilon\|_{\cal P}\cr   
&\le & \frac{1}{\sqrt{N}} \sqrt{  \sum_{i=1}^N(z_i - S^*_f(x_i,y_i))^2 + \lambda {\cal E}_2(S^*_f)} + \|\epsilon\|_{\cal P} \cr 
&\le &  \|f- S_f^*\|_{\cal P} + \|\epsilon\|_{\cal P} + \sqrt{\frac{\lambda}{N} {\cal E}_2(f)} + \|\epsilon\|_{\cal P}\cr 
&\le&  C\|f\|_{2,\infty}|\triangle|^2 + 2\|\epsilon\|_{\cal P} +  \sqrt{\frac{\lambda}{N}} \sqrt{{\cal E}_2(f)}, 
\end{eqnarray*}
where we have used the fact $S_{f,\epsilon}$ is the minimizer of (\ref{PLS}) and the fact that ${\cal E}_2(S_f^*)\le {\cal E}_2(f)$ 
which can be found in \cite{VLS02} as well as the estimate in (\ref{LSV}).    
These complete the  theorem of this section. 
\end{proof}

Note that the constant $C$ is dependent on the smallest angle of the triangulation $\triangle$. As the domain of interest is $[0, 1]^2$ we use a uniform refinement of the triangulation of two 
triangles. For $d\ge 3$, we shall use a Delaunay  triangulation. So the constant $C$ is not very large.   
If $f$ is $C^2$ smooth, then $S_{f,\epsilon}$ will be a good approximation of $f$ when 
the size $|\triangle|$ of triangulation is small, the thin plate energy ${\cal E}_2(f)$ with $\lambda>0$ is 
bounded,  and the noises $\|\epsilon\|_{\cal P}$ is 
small even though a few individual noises $\epsilon_i$ can be large. Note also that $\epsilon$ can be made small 
by increasing the accuracy of the implementation of $\phi_q$. We also note that the proof of Theorem~\ref{VLS} can be 
straightforwardly extended to the multi-dimensional setting as soon as the degree $D$ of spline space is large enough.  

Now let us illustrate some examples of KB-splines and LKB-splines in Figure~\ref{f1}. One can see that the KB-splines are continuous but not smooth functions at all,  while the LKB-splines are very
smooth. With these LKB-splines in hand, we can approximate high dimensional continuous functions accurately. Let us report our numerical results in the next section.

\begin{figure}[h]
    \centering
    \begin{tabular}{cc} 
        \includegraphics[width=3in]{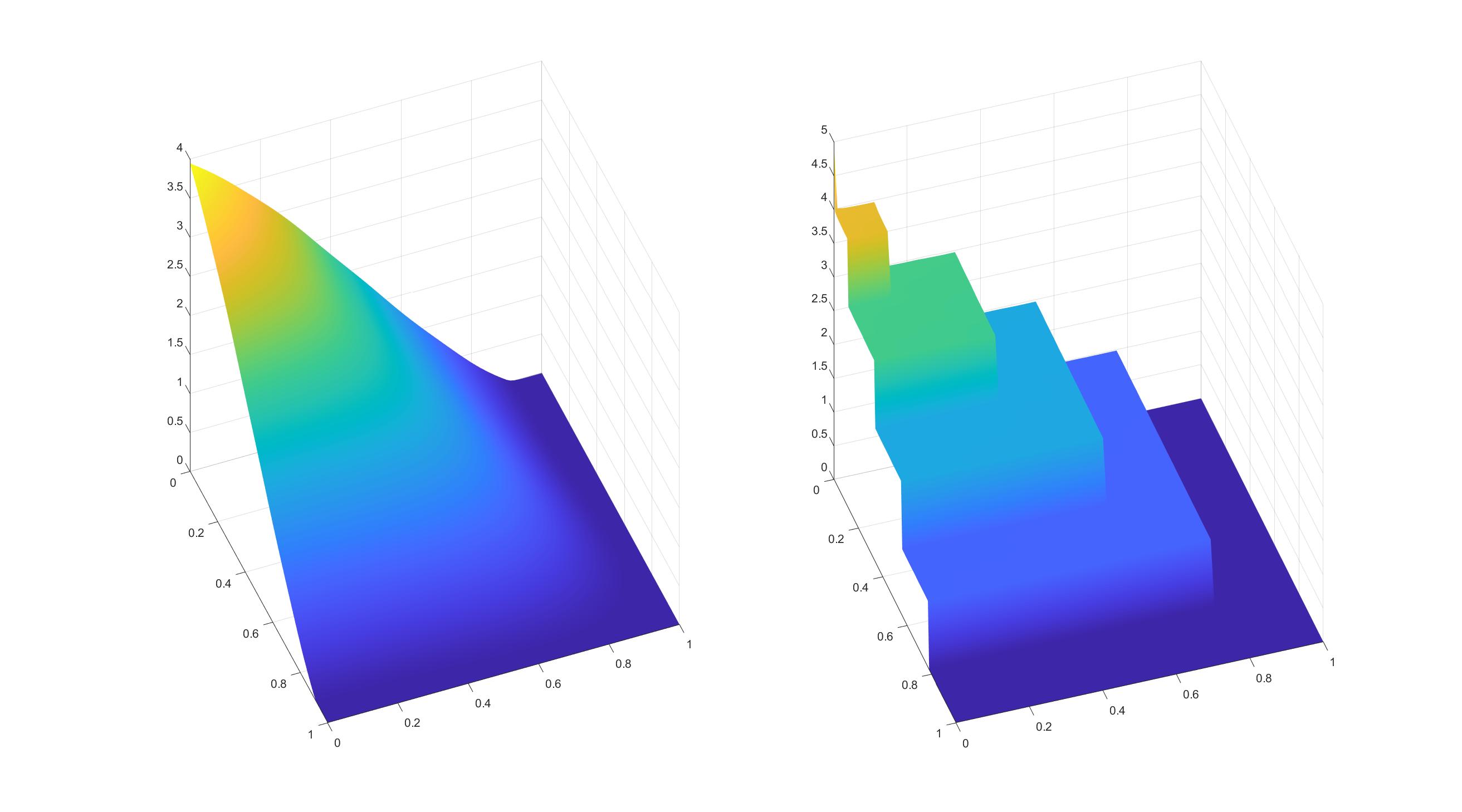}  &  \includegraphics[width=3in]{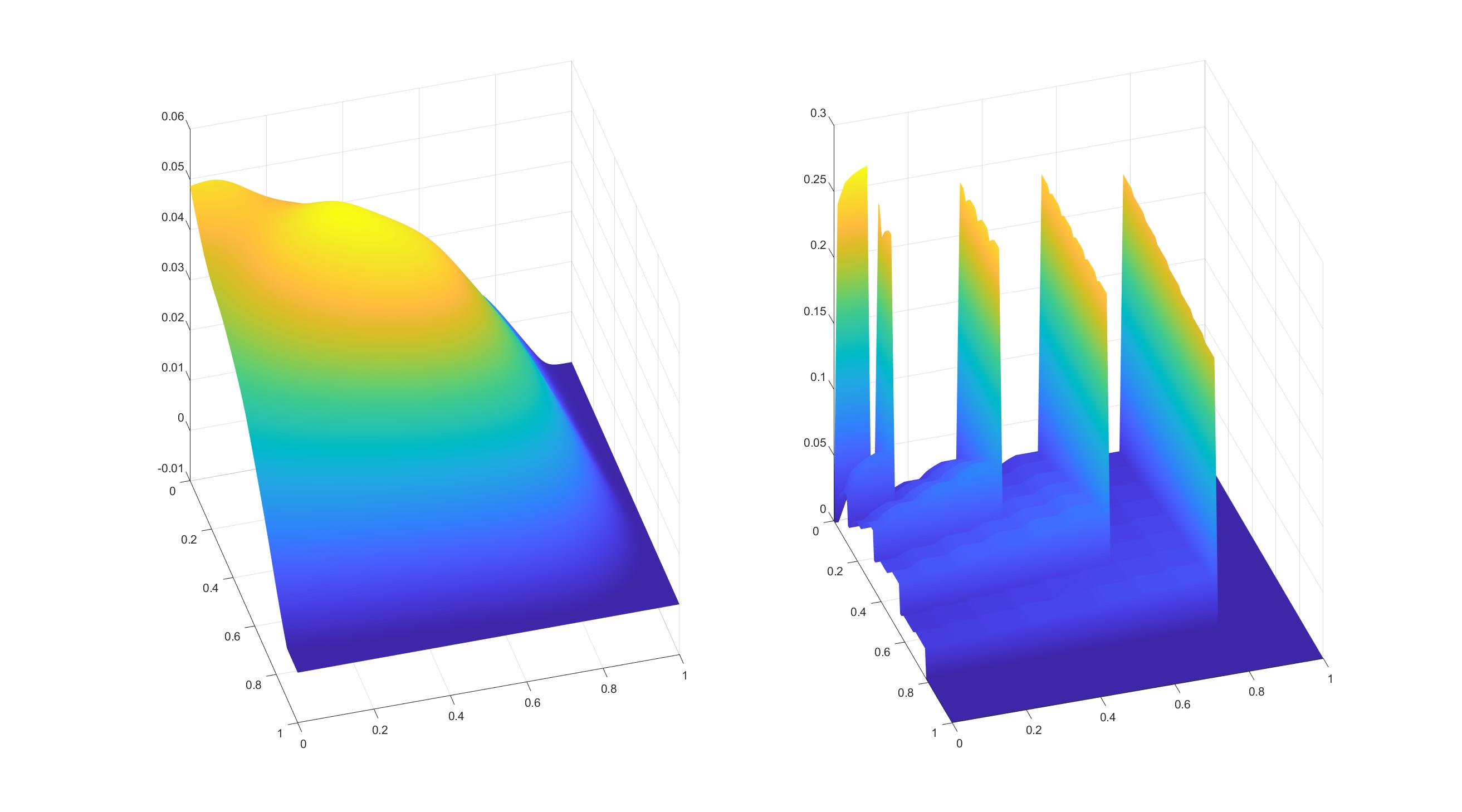} \\
	\includegraphics[width=3in]     {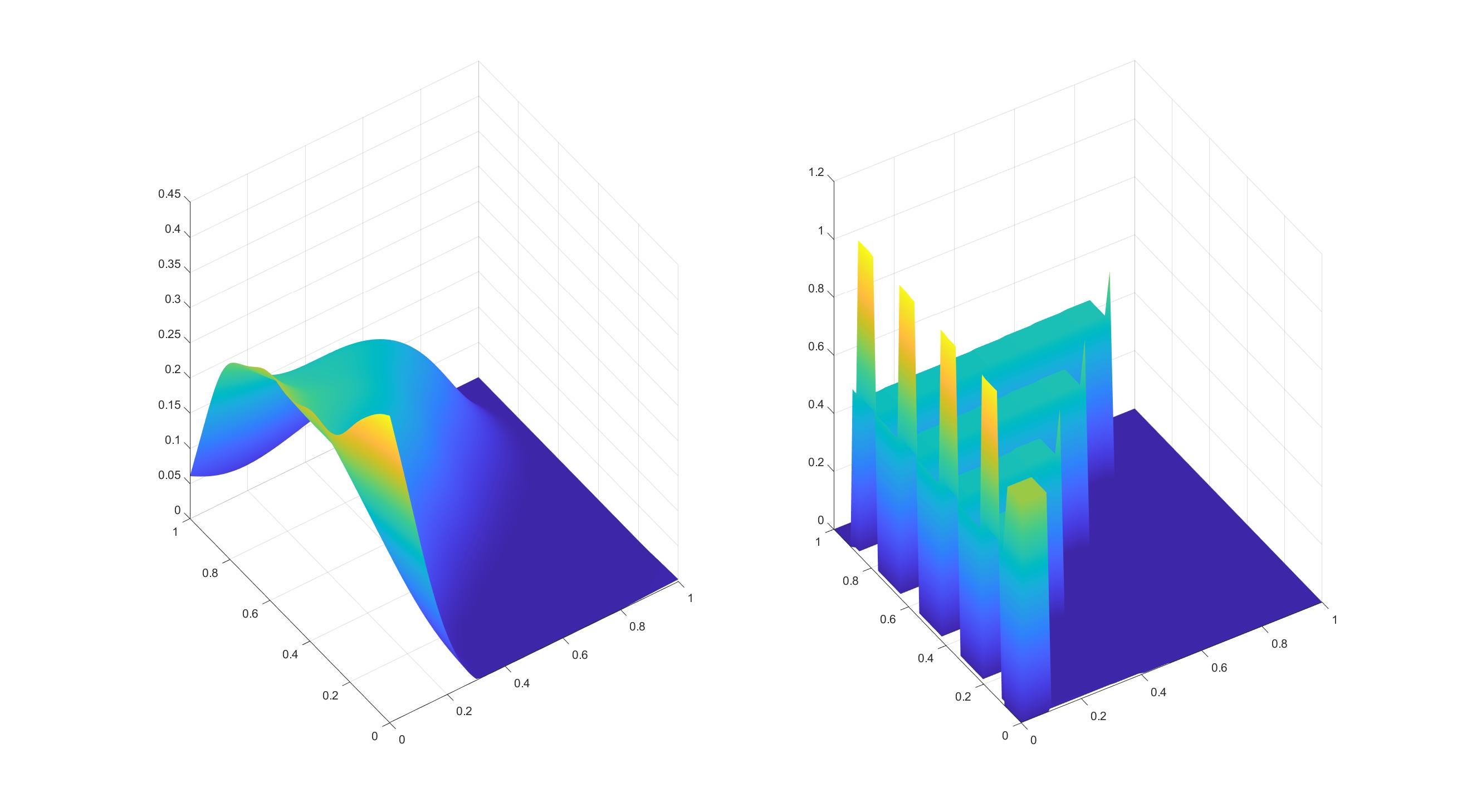} &
        \includegraphics[width=3in]{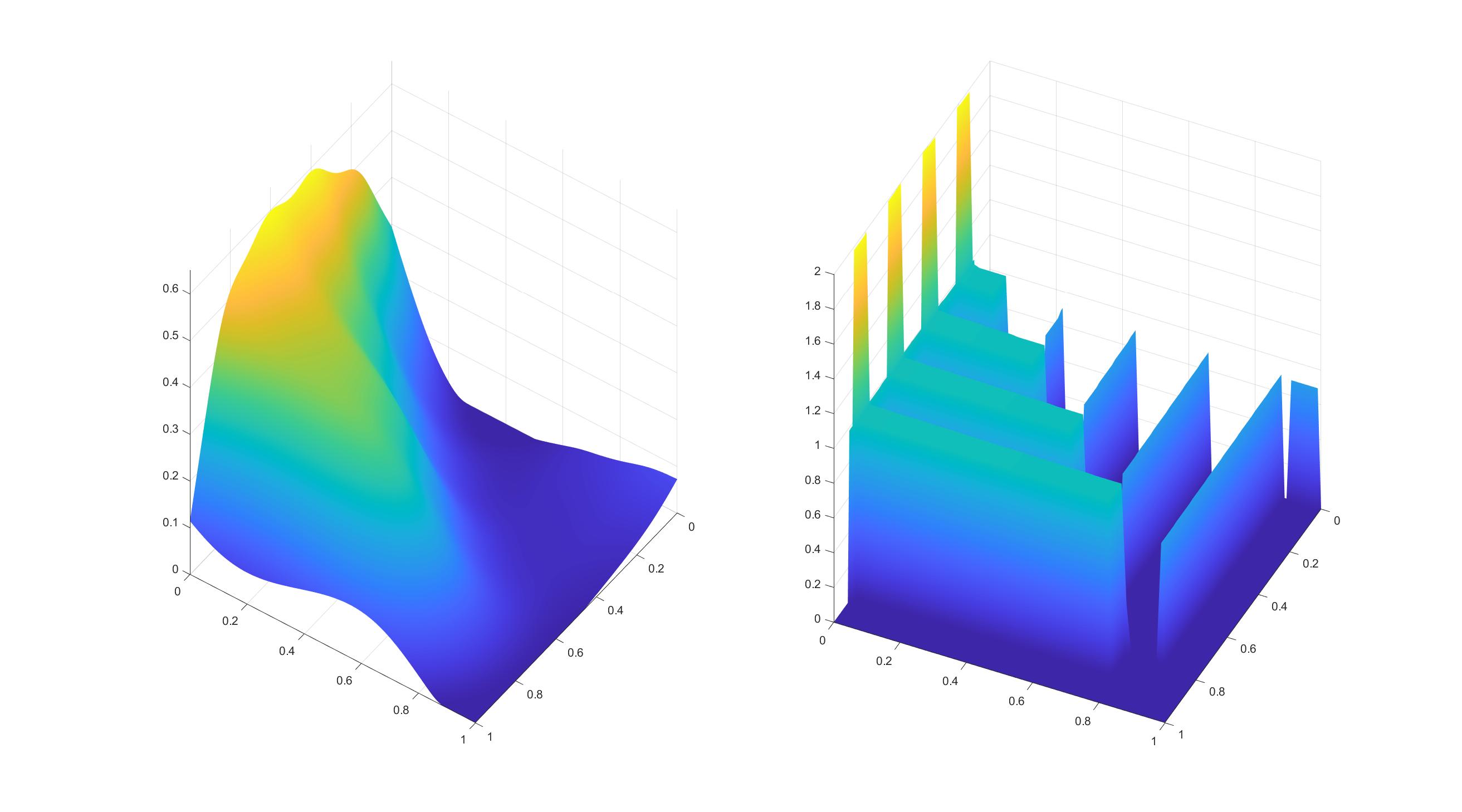} \\
    \end{tabular}
    \caption{Examples of LKB-splines (first and third columns) which are the smoothed version of the corresponding KB-splines (second and fourth columns). \label{f1}}
\end{figure}




\section{Numerical Approximation by LKB-splines}
\label{sec4}
In this section, we will first show that the LKB-splines are dense in $C([0,1]^d)$. Then we demonstrate numerically that LKB-splines can approximate general continuous functions well based on $O(n^d)$ equally-spaced sampled data locations. Further, we use the matrix cross approximation technique to show that there are at most $O(nd)$ locations among those $O(n^d)$ locations are pivotal. Therefore, we only need the function values at those pivotal locations in order to achieve a reasonable  good approximation.

\subsection{The LKB-splines are Dense in $C([0, 1]^d)$}
We shall use discrete least squares method to approximate any continuous function $f$ over $[0, 1]^d$. Let ${\bf x}_i, i=1, \cdots, N$ be a set of discrete points over $[0, 1]^d$. 
For example  we may use $N=101^d$  equally-spaced points over $[0, 1]^d$. 
For any continuous function $f\in C([0,1]^d)$,  we use the function values at these data locations to find an 
approximation $F_n= \sum_{j=1}^{dn} c_j^* LKB_{n,j}$ by the discrete least squares method 
which is the solution of the following minimization
\begin{equation}
\label{DLS}
\min_{ c_j} \| f- \sum_{j=1}^{dn} c_j LKB_{n,j}\|_{\cal P},
\end{equation}
where $\|f\|_{\cal P}$ is the RMS semi-norm based on the function values $f$ over 
these $N=101^d$ sampled data points in $[0, 1]^d$. We shall report the accuracy $\|f- F_n(f)\|_{\cal PP}$, 
where $\|f\|_{\cal PP}$ is the RMS semi-norm based on $401^d$ function values.  
Recall from Theorem~\ref{dense} and let $c_j(f)$ be the coefficients of the KB-spline approximation of $f$.   It is easy to see that 
\begin{equation}
\label{LKBapp}
\|f- F_n\|_{\cal P} \le  \|f  - \sum_{j=1}^{dn} c_k(f) LKB_{n,k}\|_{\cal P}.
\end{equation}
Writing $LKB_{n,k} = \sum_{j=1}^M c_{k,j} \phi_j$ as in the previous section with the coefficient vector 
\begin{equation*}
[c_{k,j}]= A^{-1}[ \sum_{i=1}^N KB_k(x_i,y_i)\phi_j(x_i,y_i)], 
\end{equation*}
where $A$ is the matrix as in (\ref{keymatrixequation}),  we see that 
\begin{eqnarray*}
 \sum_{k=1}^{dn} c_k(f) LKB_{n,k} &=&\sum_{k=1}^{dn} c_k(f) \sum_{j=1}^M c_{k,j} \phi_j\cr
 &=& \sum_{k=1}^{dn} c_k(f) \sum_{j =1}^M A^{-1}[ \sum_{i=1}^N KB_{n,k}(x_i,y_i)\phi_j(x_i,y_i)] \phi_j \cr
 &=&  \sum_{j =1}^M A^{-1}  [\sum_{i=1}^{N}  \sum_{k=1}^{dn} c_k(f) KB_{n,k}(x_i,y_i)\phi_j(x_i,y_i)] \phi_j \cr
 &=&  \sum_{j =1}^M A^{-1}  [\sum_{i=1}^{N}( f(x_i,y_i)+O(\epsilon))\phi_j(x_i,y_i)] \phi_j, 
\end{eqnarray*}
where we have used the proof of Theorem~\ref{dense}. i.e.  $\sum_{k=1}^{dn} c_k(f) KB_{n,k}(x_i,y_i) = f(x_i,y_i)+O(\epsilon)$.  Now we note that the 
right-hand side of the equations above is simply $S_{f,\epsilon}$. That is, 
\begin{equation}
 \sum_{j=1}^{dn} c_k(f) LKB_{n,k} = S_{f, \epsilon}.
\end{equation}
By Theorem~\ref{main1}, we conclude the following 
\begin{theorem}
\label{newmain}
Suppose that $f$ is twice differentiable over $[0, 1]^2$.  Let $F_n$ be the discrete least squares approximation of $f$ defined in (\ref{DLS}). Suppose that the points $\bfx_i=(x_i,y_i)$ for 
(\ref{DLS}) are the same as the points for denoising KB-splines to have the LKB functions.  
Then 
\begin{equation}
\label{newestimate}
\|f- F_n\|_{\cal P} \le  C\|f\|_{2,\infty} |\triangle|^2+ 2 \|\epsilon\|_{\cal P} + \frac{1}{\sqrt{N}} \sqrt{{\cal E}_2(f)}
\end{equation}
for a  positive constant $C$ independent of $f$ and triangulation $\triangle$.  
\end{theorem} 

Although our discussion above is based on the case $d=2$, all the proofs can be extended to the dimensionality $d>2$.   We next explain that the computation of discrete least squares
(\ref{DLS}) can be done based on much simpler data points and function values in the following subsection.

\subsection{The pivotal data locations for breaking the curse of dimensionality}

\begin{figure}
\centering
\includegraphics[width=0.4\textwidth]{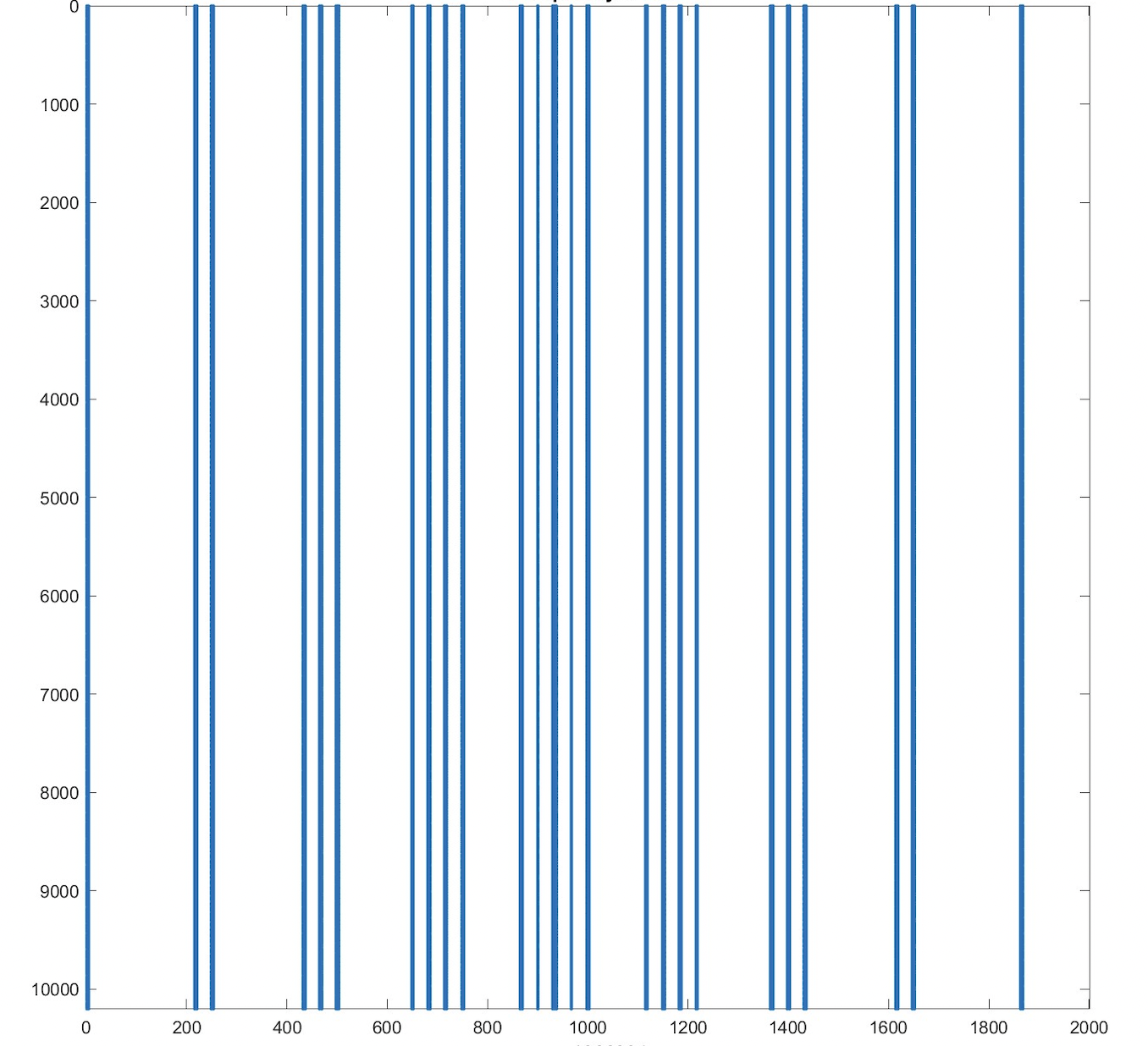} 
\vspace{-3mm}
\caption{{The sparsity pattern of data matrix for $n=1000$.}  \label{spymatrix}}
\end{figure}

For convenience, let us use $M$ to indicate the data matrix associated with the discrete least squares problem (\ref{DLS}). In other words, for $1\leq j\leq dn$, the $j$th column $M(:,j)$ consists of $ \{LKB_{n,j}(\mathbf{x}_i)\}$ where $\mathbf{x}_i\in [0,1]^d$ are those $41^d$ equally-spaced sampled points in 2D or 3D. 
Clearly, the experiment above requires $41^d$ data values which suffers from the curse of dimensionality. However, we in fact do not need such many data values.  The main reason is that the data matrix
$M$ has many zero columns or near zero columns due to the fact that for many $i=1, \cdots, nd$, the locations from $41^d$
equally-spaced points do not fall into the support of linear B-splines $b_{n,i}(t)$, $t\in [0,d]$, based on the map $z_q$.  The structures of $M$ are shown in Figure~\ref{spymatrix} for the case of $n=1000$ when $d=2$. 




That is, there are many columns in $M$ whose entries  are
zero or near zero. Therefore, there exists a sparse solution to the discrete least squares fitting problem. We adopt the well-known orthogonal matching pursuit (OMP) (cf. e.g. \cite{LW21}) to find a solution. For convenience, let us
explain the sparse solution technique as follows. Over those $41^d$ points $\bfx_i\in 
[0, 1]^d$, the columns in the matrix

\begin{equation}
\label{thematrix}
M=[ LKB_{n,j}(\bfx_i) ]_{i=1,\cdots, 41^d, j=1,\cdots, dn}
\end{equation}
are not linearly independent.  Let $\Phi$ be the normalized matrix of $M$ in (\ref{thematrix}) and 
$\bfb=[f(\bfx_i)], i=1, \cdots, 41^d$. Write ${\bf c}=(c_1, \cdots, c_{dn})^\top$, we look for
\begin{equation}
\label{CompressiveSensing}
 \min \|{\bf c}\|_0:  \Phi {\bf c}= {\bf b}
\end{equation}
where $\|{\bf c}\|_0$ stands for the number of nonzero entries of ${\bf c}$.   See many
numerical methods in (\cite{LW21}). The near zero columns in $\Phi$ also tell us that the data matrix associated with (\ref{DLS}) of size $41^d\times dn$
is not full rank $r< dn$. The LKB-splines associated with these near zero columns do not play a role. Therefore, we do not need all $dn$ 
LKB-splines. 
Furthermore, let us continue to explain that many data locations among these $41^d$ locations do not play an essential role. 

To this end, we use the so-called matrix cross approximation (see  \cite{GT01}, \cite{GOSTZ08}, \cite{MO18}, 
\cite{GH17}, \cite{KS16}, \cite{ALS23} and the literature therein). Let $r\ge 1$ be a 
rank of the approximation. It is known (cf. \cite{GT01}) that when $M_{I,J}$ of size $r\times r$ has the maximal volume among all submatrices of $M$ of size $r\times r$, we have 
\begin{equation}
\label{GT01}
\| M- M_{:,J} M_{I,J}^{-1} M_{I,:}\|_C \le (1+r)\sigma_{r+1}(M),
\end{equation}
where $\|\cdot \|_C$ is the Chebyshev norm of matrix and $\sigma_{r+1}(M)$ is the 
$r+1$ singular value of $M$, $M_{I,:}$ is the row block of $M$ associated with the
indices in $I$ and $M_{:,J}$ is the column block of $M$ associated with the indices in $J$. The volume of a square
matrix $A$ is the absolute value of the determinant of $A$.  

Note that the estimate in (\ref{GT01}) is not very good as when $\sigma_k(M) =O(1/k)$, there is no approximation at all. The estimate is recently improved in \cite{ALS23} which is given in (\ref{lsqInequ1}).

We mainly  find a submatrix $M_{I,J}$ of $M$ such that $M_{I,J}$ has the maximal volume among all $r\times r$ 
submatrices of $M$. In practice, we use the concept called dominant matrix to replace the maximal 
volume and then there are several aglorithms, e.g. maxvol algorithm, available in the literature.  We use   a few greedy based maximal volume search algorithms
developed in  \cite{ALS23}. These greedy based maxvol algorithms enable us to find a good submatrix $M_{I,J}$ 
which leads to solve a much simpler discrete least squares problem
\begin{equation}
\label{DLS3}
[M_{I.J}. M_{I,J^c}] \widehat{\bfx} =  \bff_I 
\end{equation}
where  $\bff=[\bff_I;\bff_{I^c}]$ and 
\begin{equation}
    M = 
    \begin{bmatrix}
    M_{I,J} & M_{I,J^c} \\
    M_{I^c, J} & M_{I^c, J^c}
\end{bmatrix}.
\end{equation}
according to \cite{ALS23} or simply 
\begin{equation} 
\label{DLS2}
M_{I,J}\widehat{\bfx} =  \bff_I, 
\end{equation}
 as $M_{:,J^c}\approx 0$  according to our $X_{data}$ above. We use the same 
 analysis in  \cite{ALS23} to have
 \begin{equation} \label{lsqInequ1}
    \|M_{I^c,J^c}-M_{I^c,J}M_{I,J}^{-1}M_{I,J^c}\|_C\leq \frac{(r+1)\sigma_{r+1}(M)}{\sqrt{1+\sum_{k=1}^{r}\frac{\sigma_{r+1}^2(M)}{\sigma_k^2(M)}}}.
\end{equation} 
In fact, we choose the rank $r$ of $M$ so that the above term is zero.  Furthermore, 
letting $\delta = M{\bf x}- \bff $ be the residual vector of the discrete least squares
approximation (\ref{DLS}), we write $\delta= [\delta_I; \delta_{I^c}]$. Then 
the detail calculation in \cite{ALS23} shows that 
\begin{align*}
    M_{I^c,J}\mathbf{\hat{x}}-\mathbf{f}_{I^c}&=M_{I^c,J}M_{I,J}^{-1}\mathbf{f}_I-\mathbf{f}_{I^c} \\
    & = M_{I^c,J}M_{I,J}^{-1}\delta_I+
    \begin{bmatrix}
        0_{I^c,J} & M_{I^c,J}M^{-1}_{I,J} M_{I,J^c}- M_{I^c,J^c}
    \end{bmatrix} \mathbf{x}_b+\delta_{I^c}\\
    &= M_{I^c,J}M_{I,J}^{-1}\delta_I+\delta_{I^c}
\end{align*}
since the middle term on the right-hand side is zero as explained above.  For simplicity, 
the solution  $\widehat{\bfx}$ of (\ref{DLS2}) with size $r \times 1$ is also viewed as a
vector in the original size $nd \times 1$ with zeros over the index set $I^c$.  Then the root 
mean square error 
\begin{eqnarray*}
\|M \widehat{\bfx}- \bff\|_{\cal P} = \| M_{I^c,J} \widehat{\bfx}- \bff_{I^c}\|_{\cal P} 
\le  \|M_{I^c,J}M_{I,J}^{-1}\delta_I \|_{\cal P}+ \|\delta_{I^c}\|_{\cal P}.
\end{eqnarray*}  
The first term on the right-hand side can be estimated as follows. 
By the property of $M_{I,J}$, we know that all the entries of $M_{I^c,J}M_{I,J}^{-1}$ 
are less than or equal to $1$. So the $\ell_2$ norm 
$ \|M_{I^c,J}M_{I,J}^{-1}\delta_I\|_2 \le \sqrt{r \|\delta_I\|^2_2}$ and hence, 
$$
 \|M_{I^c,J}M_{I,J}^{-1}\delta_I \|_{\cal P} \le \sqrt{r/N}\|\delta_I\|_2. 
$$
Hence, the solution $\widehat{\bfx}$ in (\ref{DLS2}) is a good approximation of $\bfx_f$, the least squares solution vector (\ref{DLS}).   More precisely, we have  

\begin{theorem} [\cite{ALS23}]
	\label{mjlaiNov182022} 
Let the residual vector $ \delta=M\bfx_f- \bff$ and write 
$\delta=[\delta_I;\delta_{I^c}]$.  Then we have 
	\begin{equation}
	\label{estimate8a}
	\|M\widehat{\bfx}- \bff\|_{\cal P} \le \sqrt{r/N}\|\delta_I\|_2+ \|\delta_{I^c}\|_{\cal P}.
	\end{equation}
\end{theorem} 

Let $\widehat{F}_n$ be the associated LKB spline approximation of $f$, i.e. $\widehat{F}_n=  [LKB_{n,1}, \cdots, LKB_{n,nd}]\widehat{\bfx}$. We now show that 
$\widehat{F}_n \approx f$.   
First, recall Theorem~\ref{newmain},  for a continuous
function $f\in C^2([0,1]^2)$, the solution $F_n$ of the discrete least squares approximation (\ref{DLS}) approximates $f$ very well. That is, 
the RMS error $\|f- F_n\|_{\cal P}=\|\delta\|_{\cal P}$ with
\begin{equation}
\label{estimate8}
\|\delta\|_{\cal P} \le C \|f\|_{2,\infty} |\triangle|^2+ 2\|\epsilon\|_{\cal P} + 
\sqrt{\frac{1}{N}} \sqrt{{\cal E}_2(f)}
\end{equation}
from Theorem~\ref{dense}.  Thus, 
\begin{eqnarray}
\label{newestimate3}
\| \widehat{F}_n - f\|_{\cal P} \le \| M\widehat{\bfx}- \bff\|_{\cal P} \le \sqrt{r}\|\delta\|_{\cal P}
  + \|\delta\|_{\cal P} . 
\end{eqnarray}
When $f$ is Kolmogorov-Lipschitz continuous with Lipschitz constant $L$, we know $\epsilon=O(L/n)$ from Corollary~\ref{newmain3}. 
Combining the estimate in (\ref{estimate8}), we have
\begin{equation}
\label{estimate88}
\| \widehat{F}_n - f\|_{\cal P} \le  (\sqrt{r}+1)C\|f\|_{2,\infty}|\triangle|^2 
+ O((\sqrt{r}+1)L/n)  + \sqrt{\frac{2r}{N}} \sqrt{{\cal E}_2(f)},  
\end{equation}
where $r$ is the rank of $X_{data}$ and is strictly less than $nd$. Assume that $\|f\|_{2,\infty}$ is bounded. 
We can choose $|\triangle|$ so small so that 
the first term on the right of (\ref{estimate88}) is small, that is, make 
$\sqrt{nd}|\triangle|^2$ small enough. The second term $\sqrt{r}/n L\le \sqrt{2/n}L$ can be small if $n$ is 
large enough. The third term $\sqrt{2r/N} \le 2\sqrt{nd/N}$ can be small if ${\cal E}_3(f)$ is bounded as 
$N\gg n$ and $N\gg d$.  

In addition to the equally-spaced points from $[0, 1]^d$, we can also use a set of randomized point locations over $[0, 1]^d$. The above discussion can also be applied. 
 Let us conclude all the discussion above and write down one of our main results in this paper. 
\begin{theorem}
\label{mjlai2024}
For any $n\gg 1$, there exists a set ${\cal D}_n$ of pivotal data locations over $[0, 1]^d$ with 
the cardinality $|{\cal D}_n|<nd$ such that  for any $f\in C([0, 1]^d)$, the  
discrete least squares problem (\ref{DLS2}) based on the function measurements 
$\bff_I=[f(\bfx_i), \bfx_i \in {\cal D}_n] $ is solved to obtain a LKB spline $\widehat{F}_n$. 
Then if $f\in C^2([0,1]^d)$,  the LKB spline $\widehat{F}_n$  
approximates $f$ very well in the sense of (\ref{newestimate3}). Furthermore, if $f$ is also 
Kolmogorov-Lipschitz continuous with     Lipschitz constant $L$, then the LKB spline 
$\widehat{F}_n$ approximates $f$ in the sense of (\ref{estimate88}).  
\end{theorem}

Let us call the data locations associated with row indices $I$ the pivotal data locations. Also, we will call
such data locations magic data locations. Two examples of pivotal data locations are shown in Figure~\ref{pivotal}. It is worthwhile to point out that such a set of pivotal locations is only dependent on the knot partition 
of $[0,d]$ when numerically building KB-splines, the sampled data when constructing LKB-splines, and the smoothing parameters for converting KB-splines to LKB-splines. However, such a set of pivotal locations is 
independent of any testing functions or the target function to approximate.  

\begin{figure}[htbp] 
    \centering
    \begin{tabular}{cc}
    \includegraphics[width=0.45\textwidth]{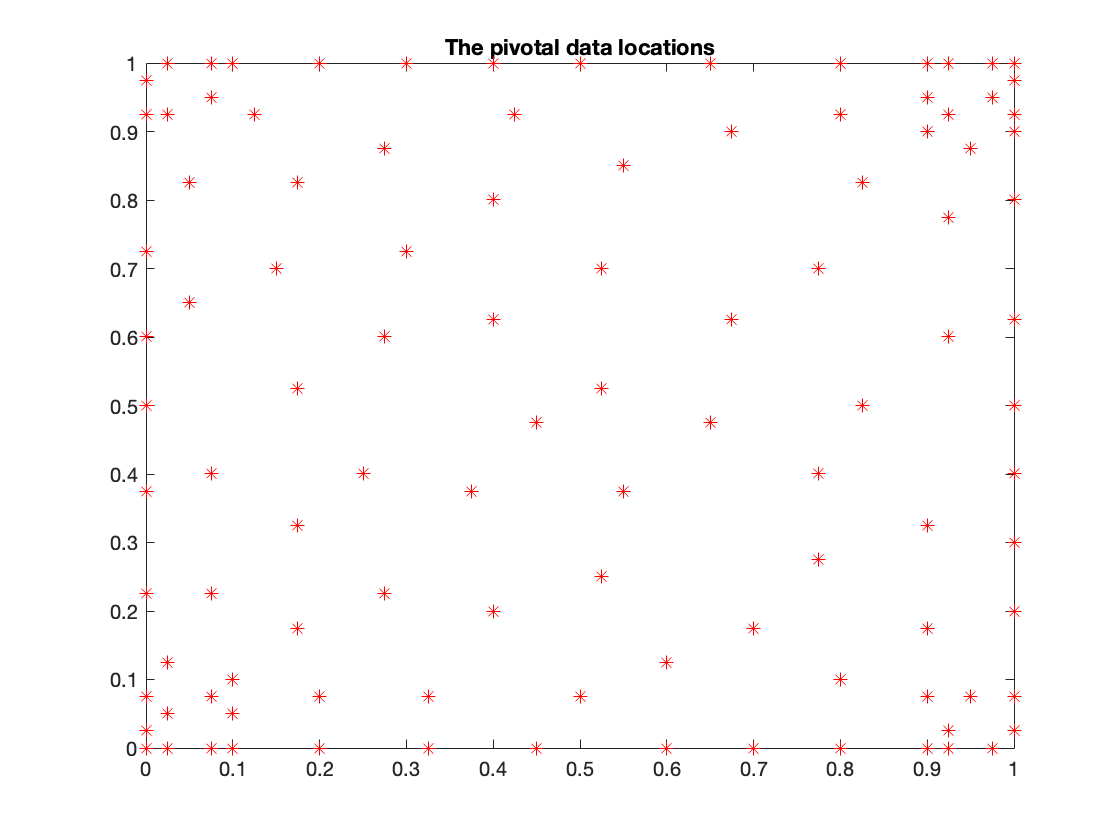} &
    \includegraphics[width=0.45\textwidth]{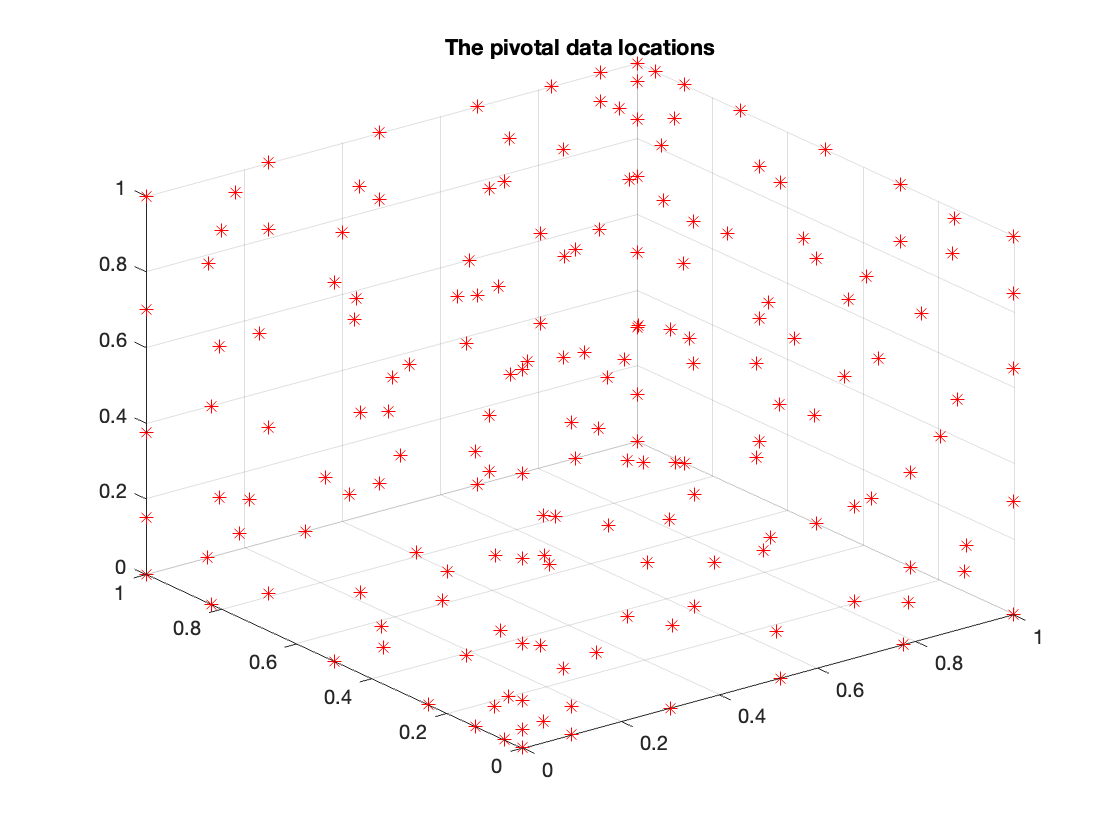} 
    \end{tabular}
    
    \caption{Pivotal data at 99 locations (selected from $41^2$ equally-spaced locations) in 2D and 178 locations (selected from $41^3$ equally-spaced locations) in 3D for $n=100$. \label{pivotal}}
\end{figure} 

The above discussion shows that we only need to $nd$ LKB-splines $LKB_{n,j}, j=1, 
\cdots, nd$ and the measurement of any function $f$ over a set of pivotal data locations with the 
number of the pivotal data location set less than $nd$ 
to obtain LKB-spline approximation  $\widehat{F}_n$ of $f$ over a dense set over $[0, 1]^d$
with a number $N\gg nd$.   These finish our approach to 
approximate smooth continuous functions in the multi-dimensional setting. This 
approach does not suffer from the curse of dimensionality although the computation of the
denoising process to obtain LKB-splines will take a lot of computer power when $d\gg 2$.  Nevertheless, 
$LKB_{n,j}, j=1, \cdots, nd$ can be computed independently and the measurements of the
values from $KB_{n,j}, j=1, \cdots, nd$ are obtained based on their computation. Essentially, we need some big companies like Google and
Microsoft to help us obtain these LKB-splines $B_{nj}$ when $d\gg 2$. The rest of us can simply use them to 
approximate any  Kolgoromov-Lipschitz continuous functions  without the curse of dimensionality.  

\subsection{Numerical Approximation by LKB-splines from a Dimension Reduction Point of View}
The approximation by LKB-splines can be thought as a dimension reduction problem in the following sense:  Given $n$ sets of data which is in a high-dimensional space $\mathbb{R}^N$ 
with $N\gg n$, find if the given $n$ sets of data lie on a lower dimensional manifold, say dimensionality $k$ with $k\ll n$. 
For example, consider a real-life situation that one uses a camera
to take a picture over a rectangular domain of interest.  Each picture is an image of size $512\times 512\times 3$. For convenience, let us focus on pictures
which are  black-and-white images. We can recast the dimension reduction problem in the following format: 
We are given many images of $512\times 512$ and would like to see if they can be approximated by a lower dimensional
subspace, e.g., the spanned by some LKB functions within a given tolerance $\epsilon$. Some literature related to this direction can be found in \cite{chen2022nonparametric, chui2018deep, Cloninger2021, Nakada2020, Schmidt2019, Shen2025}

Consider the discrete least squares approximation (DLS) which was explained in a previous section. That is, we build up a DLS matrix based on 
LKB functions. 
\begin{equation}
\label{imageapp}
M=  [{\rm LKB}_{n,j}(\mathbf{x}_i)]_{1\le i \le 512^2, j=1, \cdots, 2n}.
\end{equation}
As discussed before, the matrix $M$ is not of full column rank and we can find a pivotal data set $\mathbf{x}_i, i\in I$, where $I\subset [1, 2, \cdots, N]$
with $N=512^2$ and $\#(I)\leq 2n$. Let $\epsilon$ be the error estimated on the right-hand side of (\ref{estimate88}), we know that the picture functions 
are in a low-dimensional subspace if the triangulation size $|\triangle|$ is small enough 
vs the maximum norm of these picture functions, the Kolmogorov-Lipschitz constant $L$ is small enough vs $n$, and the energy of the picture functions
is small vs $N=512^2/n$.  It is easy to understand that the size $|\triangle|$ can be made small as long as we have a good computer with a large CPU. 
Also, we know that the energy of the function $f$ is related to the $L^2$ norm of the second-order derivatives of $f$.  The only problem left is how to know
the Kolmogorov-Lipschitz constant $L$. 

In the following,  we design  a numerical experiment to determine the Kolmogorov-Lipschitz constant $L$ for various functions. Mainly, we use 
Theorem~\ref{mjlai2019} to check the rate of convergence. Similarly, we can use Theorem~\ref{mjlai2022} to 
see the Kolmogorov modulus of smoothness.  
In this way,  we can determine if these picture functions are in a low-dimensional subpace spanned by some LKB functions.   
First of all, we note that the current computational power that the authors possess enables them to do the numerical experiments for $d=2$ 
with $n=100, 200, \cdots, 10,000$ and for $d=3$ with $n=100, 200, \cdots, 1000$.   

For $d=2$, we use the following 10 testing functions across different families of continuous functions to report the computational accuracy. They
are among 100 testing functions we have experimented so far.  For convenience, we use $N=101^2$ instead of $512^2$. The rate of convergence will
be shown. 
\begin{eqnarray*} 
f_1&=& (1+2x+3y)/6; \qquad f_2= (x^2+y^2)/2; \qquad  f_3= xy; \cr 
f_{4}&=& (x^3+y^3)/2;   \qquad 
f_{5}= 1/(1+x^2+y^2) ;\cr 
f_{6}&=& \cos(1/(1+xy));  \qquad 
f_{7} = \sin(2\pi(x+y));\cr
f_{8}&=&\sin(\pi x)\sin(\pi y);  \qquad
f_{9} = \exp(-x^2-y^2);\cr
f_{10}&=& \max(x-0.5,0)\max(y-0.5,0);
\end{eqnarray*}


\begin{table}[thpb]
	\caption{RMSEs (computed based on $401^2$ equally-spaced locations) of the DLS fitting (\ref{DLS}) based on $101^2$ equally-spaced location and pivotal location in 2D. \label{2Dex1}}
	\centering 
\vspace{2mm}
	\begin{tabular}{c|cc|cc|cc}
 \toprule
	 & \multicolumn{2}{|c|}{$n=100$} & \multicolumn{2}{|c|}{$n=1000$} & \multicolumn{2}{c}{$n=10000$} \\
        \cmidrule{2-7}
      \# sampled data  & $101^2$ & 99 & $101^2$ & 187 & $101^2$ & 879 \\
        \midrule
		$f_1$ & 1.53e-05 & 2.54e-05 & 7.55e-06 & 1.41e-05  & 5.32e-07 & 1.26e-06 \\
		$f_2$ & 1.41e-04 & 3.06e-04 &  6.48e-05 & 1.42e-04 & 1.72e-05 & 4.33e-05 \\
		$f_3$ & 8.31e-05 & 1.66e-04 & 3.45e-05 & 6.97e-05 & 2.16e-05 & 5.31e-05  \\  
		$f_4$ & 2.40e-04 & 4.71e-04  & 1.16e-04 & 2.52e-04 & 3.02e-05 & 7.81e-05  \\  
		$f_5$ & 8.99e-05 & 1.98e-04 & 4.59e-05 & 1.17e-04 & 1.01e-05 & 2.74e-05    \\    
		$f_6$ & 1.13e-04 & 2.62e-04 & 4.64e-05 & 9.04e-05 & 1.02e-05 & 2.13e-05   \\  
		$f_7$ & 1.29e-02 & 3.13e-02 & 3.77e-03 & 1.03e-02 & 7.13e-04 & 1.43e-03   \\
		$f_8$ & 7.38e-04 & 1.49e-03 & 2.85e-04 & 5.95e-04 & 8.75e-05 & 2.21e-04  \\ 
		$f_9$ & 1.16e-04 & 2.96e-04 & 5.93e-05 & 1.52e-04 & 1.73e-05 & 4.11e-05   \\  
		$f_{10}$ & 9.76e-04 & 1.87e-03 & 5.02e-04 & 8.40e-04 & 1.28e-04 & 2.42e-04 \\
  \bottomrule   
	\end{tabular}
\end{table}

For $d=3$, we choose the following 10 testing functions across different families of continuous functions to check the computational accuracy. 
The computational results for $d=2$ and $d=3$ are reported in Tables \ref{2Dex1} (those columns associated with $101^2$) and \ref{3Dex2} (those columns associated with $41^3$) respectively.
\begin{eqnarray*} 
f_1 &=& (1+2x+3y+4z)/10; \qquad f_2 = (x^2+y^2+z^2)/3; \qquad f_3 = (xy+yz+zx)/3; \cr 
f_4 &=& (x^3y^3+y^3z^3)/2; \qquad f_5 = (x+y+z)/(1+x^2+y^2+z^2); \cr 
f_6 &=& \cos(1/(1+xyz)); \qquad  f_7 = \sin(2\pi(x+y+z)); \cr 
f_8 &=& \sin(\pi x)\sin(\pi y)\sin(\pi z); \qquad  f_9 = \exp(-x^2-y^2-z^2);\cr 
f_{10} &=& \max(x-0.5,0)\max(y-0.5,0)\max(z-0.5,0);
\end{eqnarray*} 

Next, we present the numerical results in Table~\ref{2Dex1} and~\ref{3Dex2} to demonstrate that the numerical approximation results based on pivotal data locations (those columns associated with $99,187,879$ (in Table~\ref{2Dex1}) and $178,331,643$ (in Table~\ref{3Dex2}) have the same order as the results based on data locations sampled on the uniform grid with $101^2$ or $41^3$ locations. 
We therefore conclude that the curse of dimensionality for 2D and 3D function approximation 
is broken  if we use LKB-splines with pivotal data locations. 

\begin{table}[thpb]
	\caption{RMSEs (computed based on $101^3$ equally-spaced locations) of the DLS fitting (\ref{DLS}) based on $41^3$ equally-spaced location and pivotal location in 3D. \label{3Dex2}}
	\centering 
\vspace{2mm}
	\begin{tabular}{c|cc|cc|cc}
 \toprule
	 & \multicolumn{2}{|c|}{$n=100$} & \multicolumn{2}{|c|}{$n=300$} & \multicolumn{2}{c}{$n=1000$} \\
        \cmidrule{2-7}
      \# sampled data  & $41^3$ & 178 & $41^3$ & 331 & $41^3$ & 643 \\
        \midrule
		$f_1$ & 8.27e-06 & 2.25e-05 & 1.51e-06 & 4.20e-06 & 3.62e-07 & 7.48e-07  \\
		$f_2$ & 4.42e-05 & 1.68e-04 &  8.14e-06 & 2.18e-05 & 1.87e-06 & 4.11e-06 \\
		$f_3$ & 1.24e-05 & 3.79e-05 & 3.77e-06 & 9.41e-06 & 1.22e-06 & 2.53e-06  \\  
		$f_4$ & 2.93e-04 & 5.60e-04  & 1.43e-04 & 2.55e-04 & 1.16e-04 & 2.63e-04  \\  
		$f_5$ & 1.31e-04 & 3.46e-04 & 9.09e-05 & 1.66e-04 & 6.61e-05 & 1.20e-04    \\    
		$f_6$ & 1.24e-04 & 3.22e-04 & 7.02e-05 & 1.34e-04 & 5.18e-05 & 1.09e-04   \\  
		$f_7$ & 1.65e-02 & 5.29e-02 & 1.15e-02 & 1.71e-02 & 1.10e-02 & 1.85e-02   \\
		$f_8$ & 2.47e-03 & 8.28e-03 & 9.60e-04 & 1.94e-03 & 7.20e-04 & 1.19e-03  \\ 
		$f_9$ & 1.43e-04 & 3.84e-04 & 1.14e-04 & 2.01e-04 & 9.84e-05 & 3.95e-04   \\  
		$f_{10}$ & 3.21e-04 & 9.74e-04 & 2.31e-04 & 4.00e-04 & 2.04e-04 & 3.91e-04 \\
  \bottomrule   
	\end{tabular}
\end{table}

\begin{remark}
    The major computational burden for the results in Tables~\ref{2Dex1} and~\ref{3Dex2} is the denoise of the KB-splines to get LKB-splines which requires a large number of data points and spline values as the noises are everywhere over $[0, 1]^d$. The denoising step is independent of the data approximation in the sense that the 
    computation is not dependent on the measurement of function values. 
    When dimension $d\gg 2$ gets large, one has to use an exponentially increasing number of 
    points and KB-spline values by, say a tensor product spline method for denoising, 
    and hence, the computational cost will suffer the curse of dimensionality. 
    However, the denoising step can be pre-computed once for all and can be done in parallel.  In particular, obtaining these large amount of data values is done by computation, not by collecting these function values.  
    That is, once we have the LKB-splines, 
    the rest of the computational cost is no more than the cost of solving a least squares problem.  We leave the numerical results for $d>3$ in \cite{S24}.
\end{remark}  

To check the approximation order $O(1/n)$ in Theorem~\ref{mjlai2019},  we plot the approximation errors of several aforementioned functions based on pivotal point locations against $n$ using log-log scale. The results are shown in Figure~\ref{ConvgPlots}.  It is worthwhile to note that the slopes in these plots are associated with the exponent $\alpha$ in Theorem~\ref{mjlai2019n}. In other words, if the slope of a convergence plot for a function is smaller than $-1$, then we can numerically conclude that such a function belongs to the Kolmogorov-Lipschitz  class. If the slope $\alpha$ satisfies $-1<\alpha<0$, then we can numerically conclude that such a function belongs to KH class as the outer function $g$ belongs to $C^{0,\alpha}$. In other words, 
our computational method provides a numerical approach to check if a multidimensional continuous function 
is Kolmogorov-Lipschitz  or not. 

Finally, in Figure~\ref{NNrPlots}, we plot the number of pivotal locations against $n$, 
we can see that the number of 
pivotal locations increasing linearly with $n$ and the increasing rates (slopes) are at most $d$. That is, we only 
need $O(nd)$ data locations and $O(nd)$ LKB functions to approximate a multidimensional continuous function $f$ 
with approximation rate $O(1/n)$ when $f$ is KL.  Therefore, the curse of dimensionality is 
overcome when $d=2$ and $d=3$.  See \cite{S24} for numerical evidence for $d=4,5,6$. 

\begin{figure}[htbp] 
    \centering
    \begin{tabular}{ccc}

    \scriptsize{$f(x,y)=x$} & \scriptsize{$f(x,y)=(1+2x+3y)/6$} &
    \scriptsize{$f(x,y)=1/(3+\sin(\sin(\sin(\sin(x^2-y^2)))))$} \\
    \includegraphics[width=0.30\textwidth]{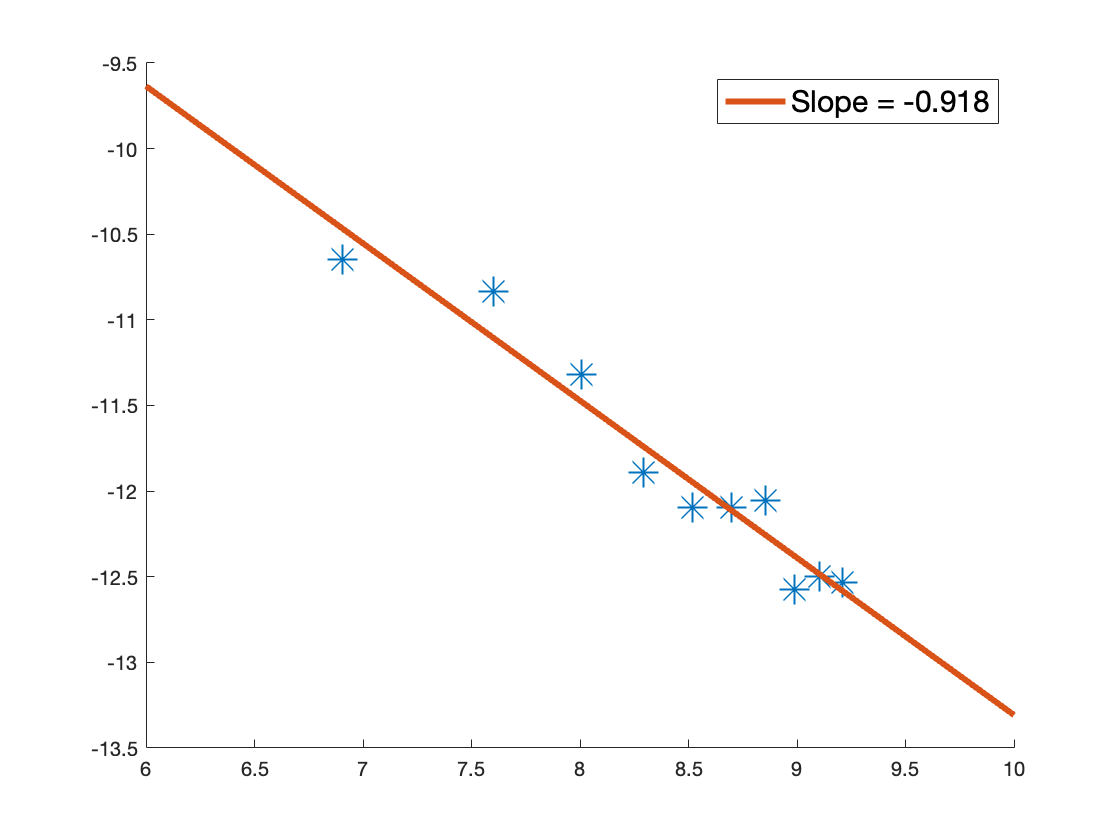} &
    \includegraphics[width=0.30\textwidth]{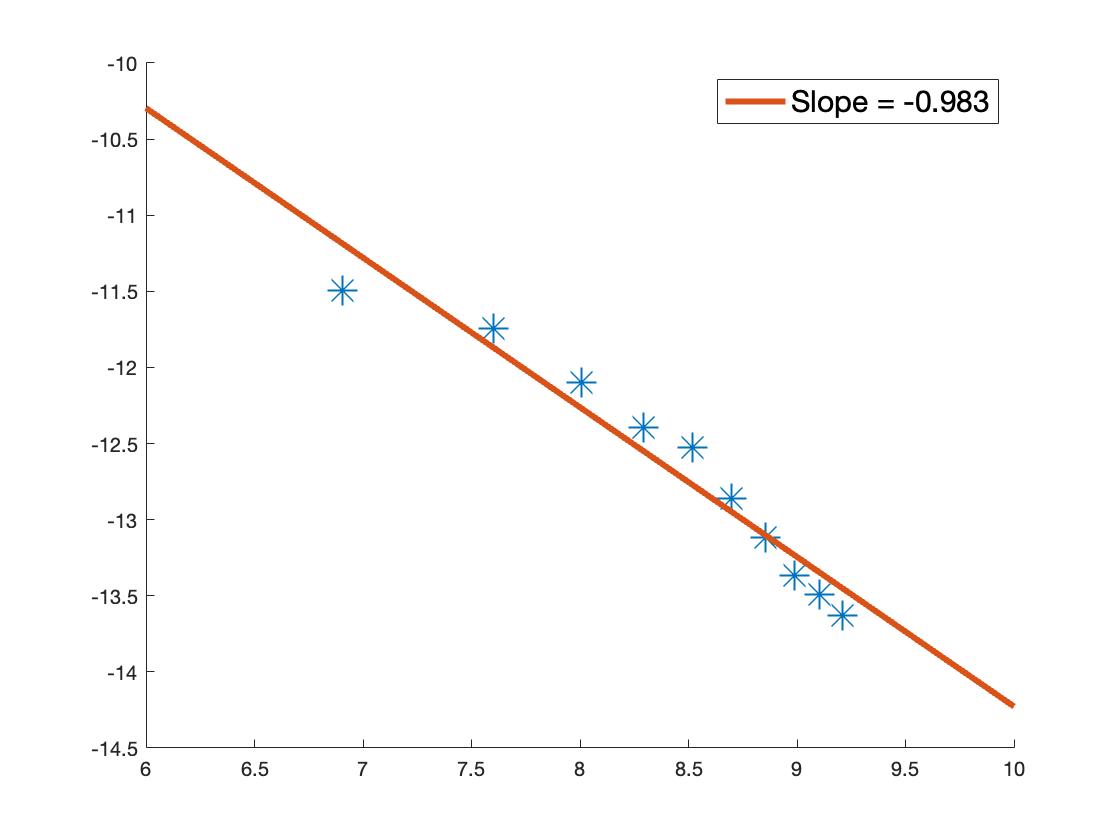} &
    \includegraphics[width=0.30\textwidth]{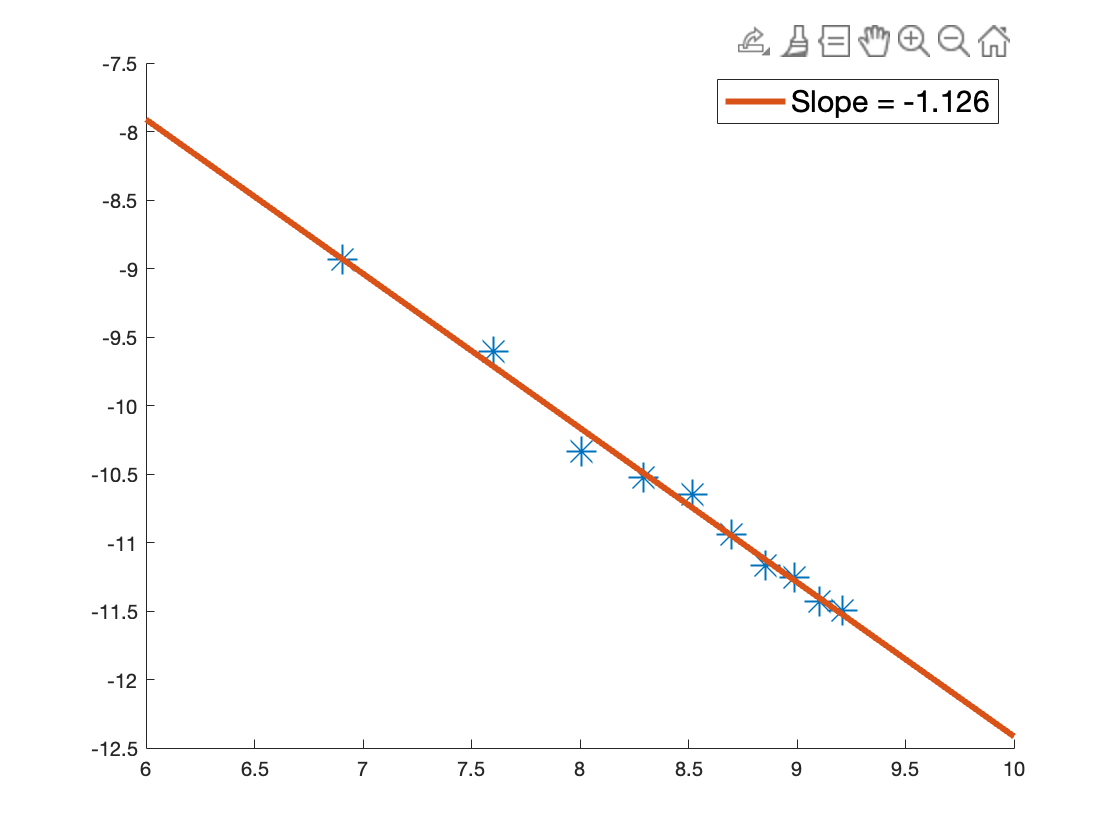}  \\

    \scriptsize{$f(x,y,z)=(1+2x+3y+4z)/10$} & \scriptsize{$f(x,y,z)=(x^2+y^2+z^2)/3$} &
    \scriptsize{$f(x,y)=(xy+yz+zx)/3$} \\
    \includegraphics[width=0.30\textwidth]{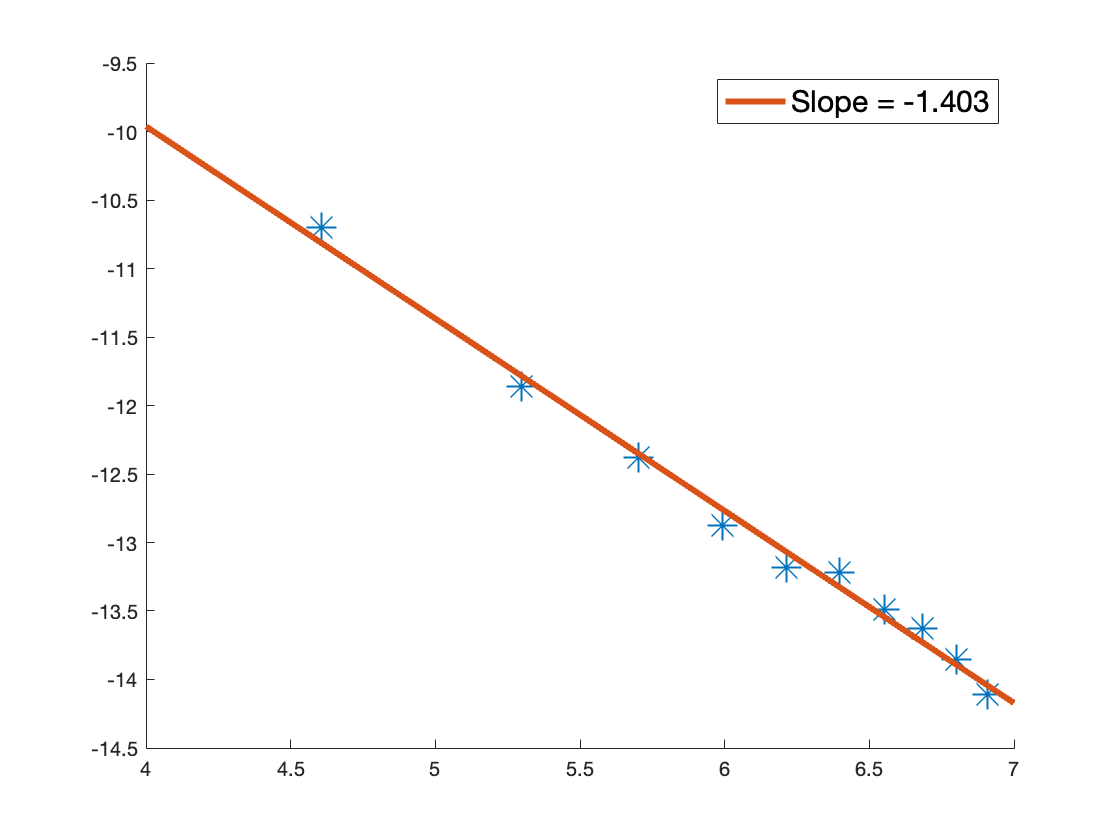} &
    \includegraphics[width=0.30\textwidth]{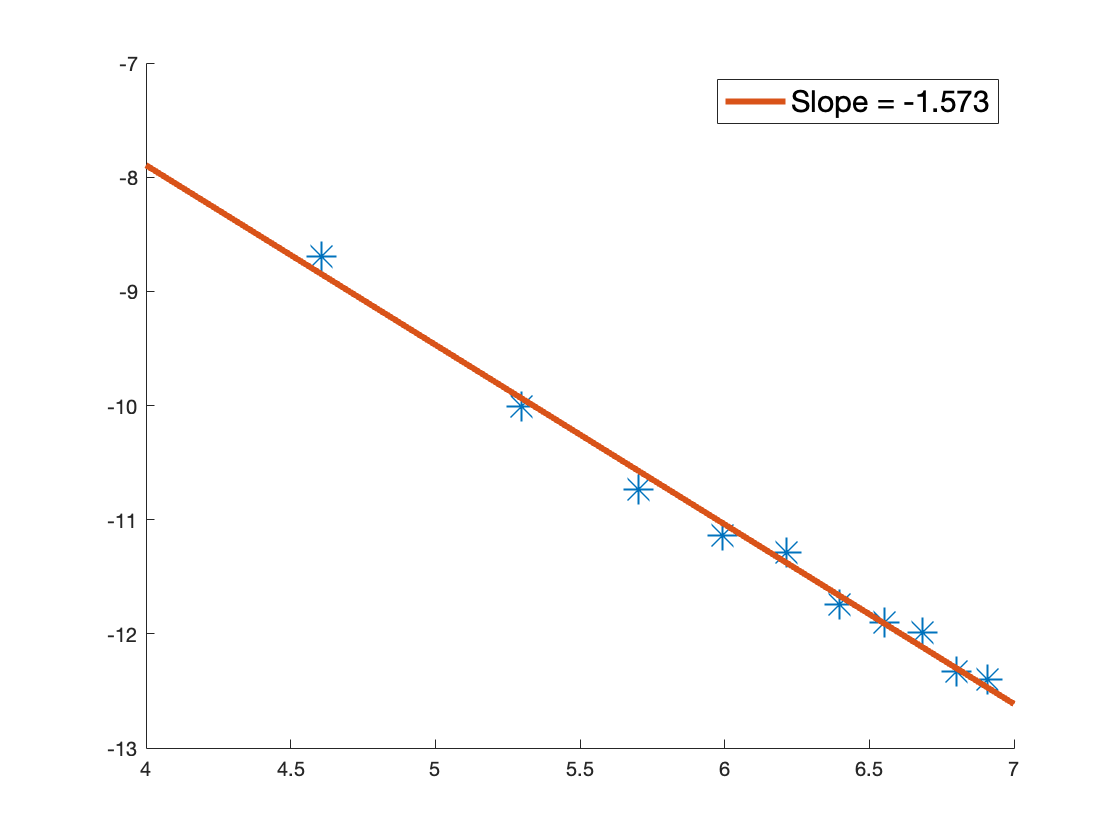} &
    \includegraphics[width=0.30\textwidth]{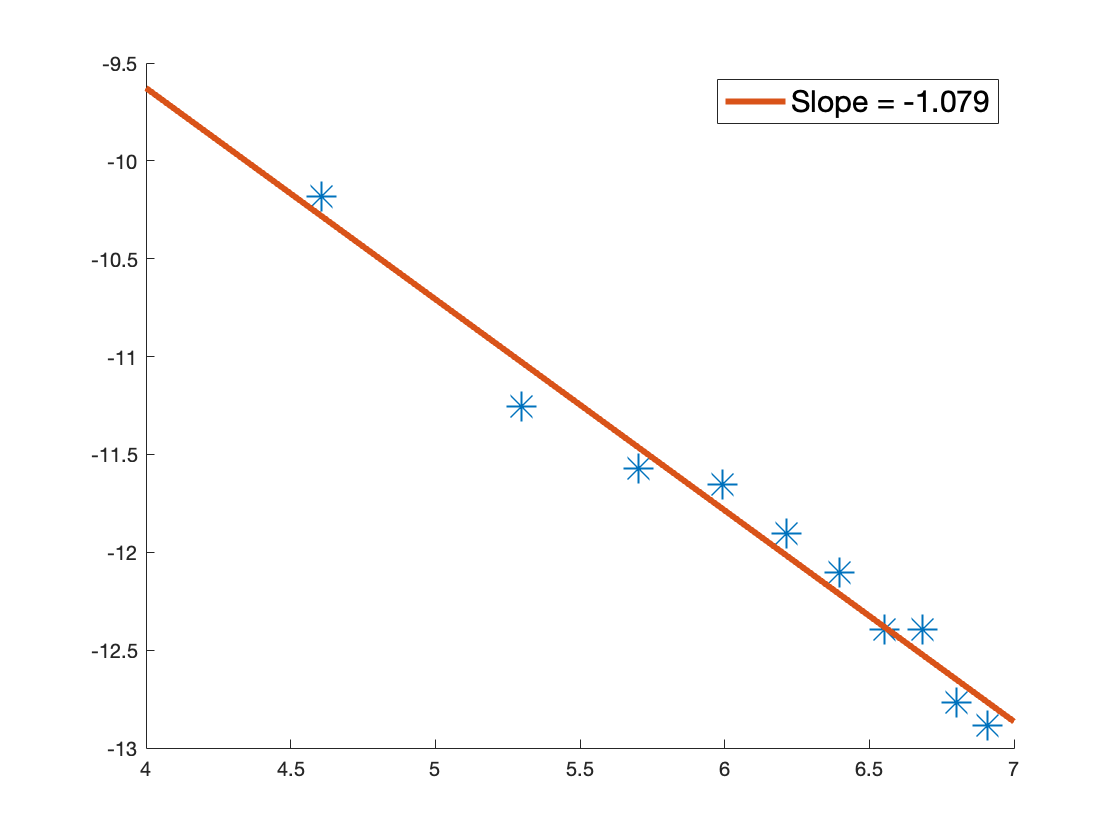} 
    \end{tabular}
    
    \caption{Plot of Convergence Rate using Log-log Scale for Functions in 2D and 3D. \label{ConvgPlots}}
\end{figure}

\begin{figure}[htbp] 
    \centering
    \begin{tabular}{cc}
    \includegraphics[width=0.4\textwidth]{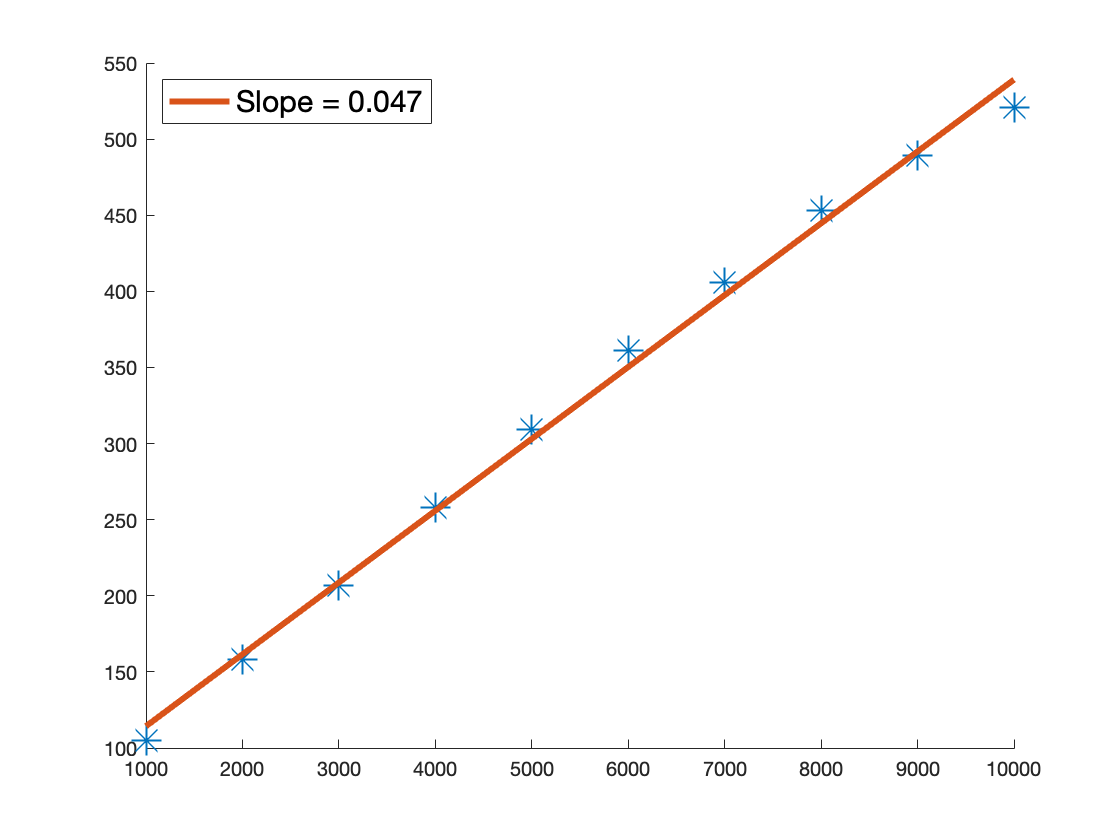} &
    \includegraphics[width=0.4\textwidth]{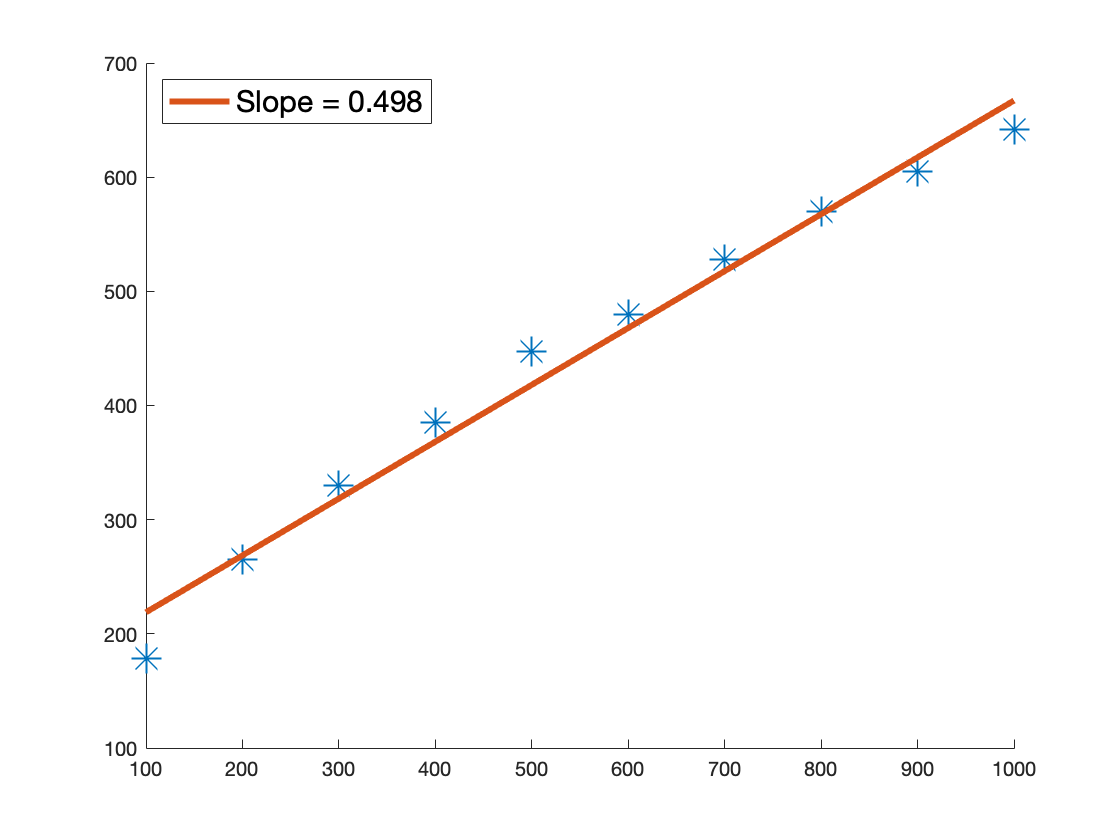}
    \end{tabular}
    
    \caption{Number of Pivotal Locations (vertical axis) against $n$ (horizontal axia) in 2D (left) and in 
    3D (right). \label{NNrPlots}}
\end{figure} 

Finally, let us end up this section with some important remarks. 

\begin{remark}
The pivotal data set is dependent on the degree of KB-splines and the LKB-splines, which are also dependent on the smoothing 
parameters and triangulation for converting KB-splines to LKB-splines. After LKB-splines are constructed, the 
pivotal point set is dependent on the discrete least squares (DLS) fitting method. 
For example, if we use randomly sampled points over $[0, 1]^d$ for a DLS method instead of equally-space points, the
pivotal point set is clearly different from the pivotal points based on the equally-spaced points over $[0, 1]^d$.
If we use $201\times 201$ equally-spaced points instead of $101\times 101$ equally-spaced points when constructing 
a DLS fitting based on LKB-splines over $[0, 1]^2$, the location of the pivotal data are different and the size 
of pivotal data set is slightly bigger than the ones shown on the left panel in Figure~\ref{pivotal}. 
\end{remark}

\begin{remark}
Certainly, there are many functions such as $f(x,y)= \sin(100 x)\sin(100 y)$ or 
$f(x,y)=\tanh(100((2x-1)^2+(2y-1)^2-0.25))$  which LKB-splines can not approximate well
based on the pivotal data sets above.  Such highly oscillated functions are hard to approximate no matter what methods are used.  We believe that
these functions will have a large Kolmogorov-Lipschitz constant. 
One indeed needs a lot of the data (points and the function values over the points) in order to approximate them well. One may also consider to use Fourier basis 
as outer functions rather than B-splines basis to approximate such highly oscillated trigonometric functions via KST. We leave it as a future research topic.        
\end{remark}

\begin{remark}
    To reproduce the experimental results in this paper, we uploaded our MATLAB codes in \url{https://github.com/zzzzms/KST4FunApproximation}. In fact, we have tested more than 100 functions in 2D and 3D with pivotal data sets which 
    enables us to approximate these functions very well.  
\end{remark}

\leftline{\bf Acknowledgement:} 
The authors would like to thank the anonymous reviewers for their comments and suggestions which improve the readability of the paper.





    















\end{document}